\newcommand{\ee}{\varepsilon}
\newcommand{\defeq}{{\coloneqq}}
\newcommand{\BUC}{{\mathrm{BUC}}}
\DeclareMathOperator{\N}{N}
\newcommand{\Rd}{{\mathbb R^d}}
\newcommand{\diver}{\nabla \cdot}
\DeclareMathOperator{\supp}{supp}
\newcommand*\diff{\mathop{}\!\mathrm{d}}
\newtheorem{theorem}{Theorem}[section]
\newtheorem{proposition}[theorem]{Proposition}%
\newtheorem{corollary}[theorem]{Corollary}%
\newtheorem{lemma}[theorem]{Lemma}%
\theoremstyle{definition}
\newtheorem{definition}[theorem]{Definition}%
\newtheorem{remark}[theorem]{Remark}%
\numberwithin{equation}{section}
\newcommand{\nc}{\normalcolor}
\renewcommand*{\@fnsymbol}[1]{\ensuremath{\ifcase#1\or \star \or \dagger\or \ddagger\or
		\mathsection\or \mathparagraph\or \|\or **\or \dagger\dagger
		\or \ddagger\ddagger \else\@ctrerr\fi}}
\begin{document}
\title{Vortex formation for a non-local interaction model  \\with  Newtonian repulsion and superlinear mobility }

\author{J.A. Carrillo\thanks{Mathematical Institute, University of Oxford, Oxford OX2 6GG, UK. Email: \href{mailto:carrillo@maths.ox.ac.uk}{carrillo@maths.ox.ac.uk}} %
\and D. Gómez-Castro\thanks {Mathematical Institute, University of Oxford, Oxford OX2 6GG, UK. Email: \href{mailto:gomezcastro@maths.ox.ac.uk}{gomezcastro@maths.ox.ac.uk}} \thanks{Instituto de Matemática Interdisciplinar, Universidad Complutense de Madrid} \and %
J.L. Vázquez\thanks{Departamento de Matemáticas, Universidad Autónoma de Madrid. \href {mailto:juanluis.vazquez@uam.es}{juanluis.vazquez@uam.es}}}
\maketitle

\vspace{-0.8cm}

\begin{abstract}
We consider density solutions for gradient flow equations of the form $u_t = \diver ( \gamma(u)  \nabla \N(u))$, where $\N$ is the Newtonian repulsive potential in the whole space $\Rd$ with the nonlinear convex mobility $\gamma(u)=u^\alpha$, and $\alpha>1$. We show that solutions corresponding to compactly supported initial data remain compactly supported for all times leading to moving free boundaries as in the linear mobility case  $\gamma(u)=u$. For linear mobility it was shown that there is a special solution in the form of a disk vortex of constant intensity in space $u=c_1t^{-1}$ supported in a ball that spreads in time like $c_2t^{1/d}$, thus showing a discontinuous leading front or shock. Our present results are in sharp contrast with the case of concave mobilities of the form $\gamma(u)=u^\alpha$, with $0<\alpha<1$ studied in \cite{Carrillo+GV+Vazquez2019}. There, we developed a well-posedness theory of viscosity solutions that are positive everywhere and moreover display a fat tail at infinity. Here, we also develop a well-posedness theory of viscosity solutions that in the radial case leads to a very detailed analysis allowing us to show a waiting time phenomena. This is a typical behavior for nonlinear degenerate diffusion equations such as the porous medium equation. We will also construct explicit self-similar solutions exhibiting similar vortex-like behaviour characterizing the long time asymptotics of general radial solutions under certain assumptions. Convergent numerical schemes based on the viscosity solution theory are proposed analysing their rate of convergence. We complement our analytical results with numerical simulations ilustrating the proven results and showcasing some open problems.
\end{abstract}

 \section{Introduction}
We are interested in the family of equations of the form
\begin{equation*}
	\begin{dcases}
		u_t = \diver ( \gamma(u)   \nabla v) & (0,+\infty) \times \mathbb R^d, \\
		-\Delta v = u & (0,+\infty) \times \mathbb R^d, \\
		u = u_0 & t = 0,
	\end{dcases}
\end{equation*}
where the function $\gamma(u)$ is called the mobility. They all correspond to gradient flows with nonlinear mobility of the Newtonian repulsive interaction potential in dimension $d\ge 1$
$$
\mathcal{F}(u)=\frac12 \int_{\mathbb R^d} \N(u) u \diff x\, ,
$$
with $\N(u)$ the Newtonian repulsive potential \cite{CLSS10}, as they can be written in the form
\begin{equation}\label{nmgf}
	\begin{dcases}
		u_t + \diver \left( \gamma(u) w\right)=0 & (0,+\infty) \times \mathbb R^d, \\
		w = -\nabla \frac{\delta \mathcal{F}}{\delta u} & (0,+\infty) \times \mathbb R^d.
	\end{dcases}
\end{equation}
We will consider nonnegative data and solutions. The linear case $\gamma(u)=u$ is well-known in the literature as a model for wave propagation in superconductivity or superfluidity, cf. Lin and Zhang \cite{Lin+Zhang2000}, Ambrosio, Mainini, and Serfaty \cite{Ambrosio+Mainini+Serfaty2011, Ambrosio+Serfaty2008}, Bertozzi, Laurent, and L\'eger \cite{BLL12}, Serfaty and Vazquez \cite{Serfaty+Vazquez2014}. In that case the theory leads to uniqueness of bounded weak solutions having the property of compact space support, and in particular there is a special solution in the form of a disk vortex of constant intensity in space $u=c_1t^{-1}$ supported in a ball that spreads in time like $c_2t^{1/d}$, thus showing a discontinuous leading front or shock. This vortex is the generic attractor for a wide class of solutions.

We want to concentrate on models with nonlinear mobility of power-like type $\gamma(u)=u^\alpha$, $\alpha>0$. The sublinear concave $0<\alpha<1$ range  was studied in our previous paper \cite{Carrillo+GV+Vazquez2019}. For nonnegative data the study provides a theory of viscosity solutions for radially symmetric initial data that are positive everywhere, and moreover display a fat tail at infinity. In particular the standard vortex of the linear mobility transforms into an explicit selfsimilar solution that reminds of the Barenblatt solution for the fast diffusion equation. A very detailed analysis is done for radially symmetric data and solutions via the corresponding mass function that satisfies a first-order Hamilton-Jacobi equation.

The present paper contains the rest of the analysis of power-like mobility for convex superlinear cases when $\gamma(u)=u^\alpha$, and $\alpha>1$. Again, we perform a fine analysis in the case of radially symmetric solutions by means of the study of the corresponding mass function. The theory of viscosity solutions for the mass function still applies. As for qualitative properties, let us stress that in this superlinear parameter range $\alpha>1$ solutions recover the finite propagation property and the existence of discontinuity fronts (shocks).  We analyse in detail how the stable asymptotic solution goes from the fat tail profile of the sublinear case $\alpha<1$ to the shock profile of the range $ \alpha>1$ when passing through the critical value $\alpha=1$.

Another important aspect of the well-posedness theory that we develop for viscosity solutions \nc with radially symmetric initial data, is that the classical approach based on optimal transport theory for equations of the form \eqref{nmgf} developed in \cite{AGS,DNS09,CLSS10} fail for convex superlinear mobilities as described in \cite{CLSS10} since the natural associated distance is not well-defined \cite{DNS09}. Therefore, our present results are the first well-posedness results for gradient flows with convex superlinear power-law mobilities, even if only for radially symmetric initial data. Finally, let us mention that we still lack of a well-posedness theory for gradient flow equations of the form \eqref{nmgf} with convex superlinear mobilities for general initial data, possibly showing that the vortex-like solutions are generic attractors of the flow.

We also highlight how different convex superlinear mobilities are with respect to the linear mobility case by showing the property of an initial waiting time for the spread of the support for radial solutions that we are able to characterize. Indeed, let $u_0$ be radial and supported in a ball: $\supp u_0 = \overline B_R$. We prove that there is finite waiting time at $r=R$ if and only if
\begin{equation}\label{aux}
\limsup_{ r \to R^- } \,\,\,\, (R - r)^{\frac{1-\alpha}{\alpha}}   \!\!\!\!\!\!\int \limits _{r < |x| < R} u_0  (x) \diff x  =C < +\infty.
\end{equation}
The waiting time phenomenon is typical of slow diffusion equations like the Porous Medium Equation \cite{Vazquez1984} or the $p$-Laplacian equation. In our class of equations it does not occur for the whole range $0<\alpha\le 1$. We are able to estimate the waiting time in terms of the limit constant $C$ in \eqref{aux}.

We combine the theory with the computational aspect: we identify suitable numerical methods and perform a detailed numerical analysis. Indeed, we construct numerical finite-difference convergent schemes and prove convergence to the actual viscosity solution for radially symmetric solutions based on the mass equation. By taking advantage of the connection to nonlinear Hamilton-Jacobi equations, we obtain monotone numerical schemes showing their convergence to the viscosity solutions of the problem with a uniform rate of convergence, see Theorem \ref{thm:convergence M to m} in constrast with the case of concave sublinear mobilities in \cite{Carrillo+GV+Vazquez2019}.

The paper is structured as follows. We start by constructing some explicit solutions and developing the theory for radially symmetric initial data by using the mass equation in Sections 2 and 3 respectively. We construct the general viscosity solution theory for radially symmetric initial data in Section 4 showing the most striking phenomena for convex superlinear mobilities: compactly supported free boundaries determined by sharp fronts and the waiting time phenomena. Section 5 is devoted to show the convergence of monotone schemes for the developed viscosity solution theory with an explicit convergence rate.  The numerical schemes constructed illustrate the sharpness of the waiting time phenomena result, and allow us to showcase interesting open problems in Section 6. 
A selection of figures illustrates a number of salient phenomena. We provide videos for some interesting situations as supplementary material in \cite{Videos}.

\section{Explicit solutions}
The aim of this section is to find some important explicit solutions of
\begin{equation}
\tag{P}
\label{eq:main equation}
\begin{dcases}
u_t = \diver ( u^\alpha   \nabla v) & (0,+\infty) \times \mathbb R^d \\
-\Delta v = u & (0,+\infty) \times \mathbb R^d \\
u = u_0 & t = 0
\end{dcases}
\end{equation}
Notice that, as in \cite{Carrillo+GV+Vazquez2019}, we still have that, for $C \ge 0$
\begin{equation}
\label{eq:Friendly Giant}
	\overline u(t) = ( C + \alpha t )^{-\frac 1 \alpha}
\end{equation}
is a solution of the PDE. The repulsion potential $v$ diverges quadratically at infinity. For $C = 0$ we recover the Friendly Giant solution, introduced in \cite{Carrillo+GV+Vazquez2019}.

\subsection{Self-similar solution}
The  algebraic calculations developed in \cite{Carrillo+GV+Vazquez2019}  still work, we  get self-similar solutions of the form
\begin{equation*}
	U(t,x) = t^{-\frac 1 \alpha} F( |x|t^{-\frac 1 {\alpha d}} ),
\end{equation*}
with the self-similar profile
\begin{equation}
	\label{eq:profile}
	F(|y|) = \left(  \alpha + \left(  \frac{ \omega_d |y|^d   }{ \alpha }  \right)^{-\frac{\alpha}{\alpha - 1}}  \right) ^{-\frac 1 \alpha}.
\end{equation}
Let us remark that, for $\alpha > 1$, we have $F(0) = 0$ and $F(+\infty) = \alpha ^{-\frac 1 \alpha}$ (see \Cref{fig:selfsimilar} for a sketch of the self-similar profiles depending on $\alpha$).
This is different to the case $0 <\alpha < 1$, where $F(0) $ is a positive constant and $F$ decays at infinity.  In our present case $\alpha > 1$, the self-similar solutions have infinite mass, whereas for $0 < \alpha < 1$ the self-similar solutions have finite mass.
\begin{figure}[H]
	\centering
	\includegraphics[width=.5\textwidth]{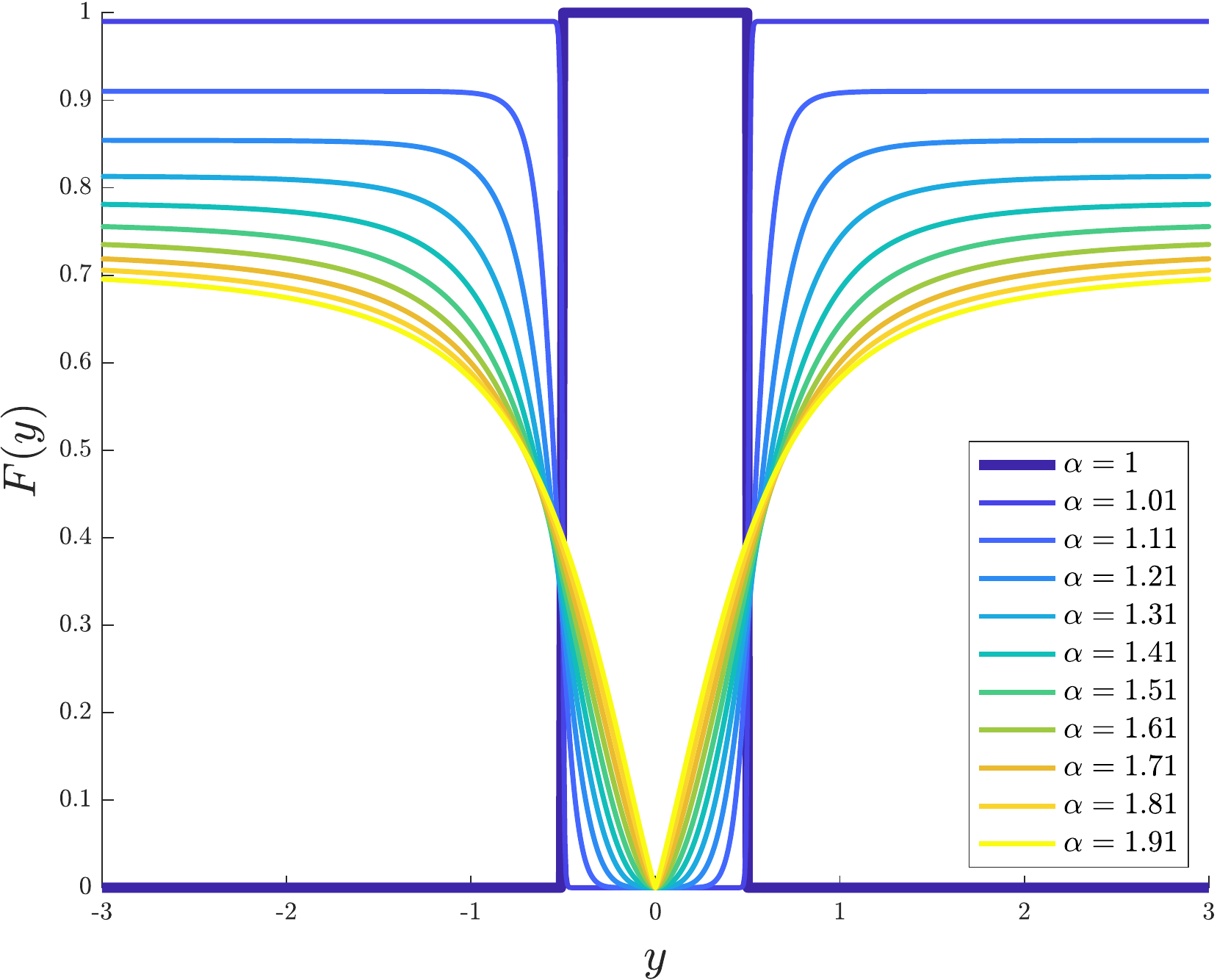}
	\caption{Self-similar profiles when $d=1$}
	\label{fig:selfsimilar}
\end{figure}
For $0 < \alpha < 1$ these self-similar give the typical asymptotic behaviour as $t \to + \infty$. For $\alpha > 1$ we will show this is no longer the case, for finite mass solutions.

\subsection{Vortices}

The vortex solutions defined as
\begin{equation}
	\label{eq:vortex}
	u(t,x) = \begin{cases}
	(u_0^{-\alpha} + \alpha t)^{-\frac 1 \alpha} & \omega_d |x|^d < S(t) = M / 	(u_0^{-\alpha} + \alpha t)^{-\frac 1 \alpha}  \\
	0 & \text{ otherwise}
	\end{cases}
\end{equation}
are local weak solutions of \eqref{eq:main equation}.

\begin{remark}
	Notice that, for $t \to - u_0^{-\alpha} / \alpha$, the vortex collapses to the Dirac delta of mass $M$.
\end{remark}
This solution was also constructed by characteristics and the Rankine-Hugoniot condition in \cite[Section 5.2]{Carrillo+GV+Vazquez2019}. However, in that case the Lax-Oleinik condition of incoming characteristics failed. In our present setting for $\alpha > 1$, this shock-type solutions are entropic, and we will prove that they are indeed viscosity solutions of the mass equation \eqref{eq:mass}.
We will prove that, for $\alpha > 1$, they now have significant relevance. In particular, they describe the asymptotic behaviour as $t \to +\infty$.
Notice that the the radius of the support $S(t)$ of this kind of solutions solves an equation of type
\begin{equation}
\label{eq:vortex RH}
\frac{\diff S}{\diff t} = M (u_0^{-\alpha} + \alpha t)^{-1 + \frac 1 \alpha}.
\end{equation}

There are also complementary vortex solutions:
\begin{equation}
	u(t,x) = \begin{dcases}
	0 & |x|^d < a\\
	(c_0^{-\alpha} + \alpha t)^{-\frac 1 \alpha} & |x|^d > a
	\end{dcases}
\end{equation}
which are stationary (and solve the mass problem \eqref{eq:mass} by characteristics). This type of solution belongs to a theory of solutions in $L^\infty$, but not $L^1$.

\medskip

\section{Mass of radial solutions}
In \cite{Carrillo+GV+Vazquez2019} we proved that the mass of a radial solution
\begin{equation*}
	m(t,r) = \int_{B_r} u(t,x) \diff x
\end{equation*}
written in volume coordinates $\rho = d \omega_d r^d$ is a solution of the equation
\begin{equation}
	\label{eq:mass}
	m_t + m (m_\rho)^\alpha = 0.
\end{equation}

\subsection{Characteristics for the mass equation}
In \cite{Carrillo+GV+Vazquez2019} we computed the generalised characteristics for the mass equation in the case of sublinear mobility, $\alpha < 1$.
The algebra for characteristics still works
\begin{equation}
	\label{eq:characteristics}
	\rho = \rho_0 + \alpha m(\rho_0) u_0(\rho_0)^{\alpha - 1} t\,,
\end{equation}
and the solutions behave like
\begin{equation}
	\label{eq:characteristics solutions}
	u(t,\rho) = (u_0(\rho_0)^{-\alpha} + \alpha t)^{-\frac 1 \alpha}\,,
\end{equation}
and
\begin{equation*}
	m(t, \rho) = m_0 (\rho_0) \left( 1 + \alpha u_0 (\rho_0)^\alpha t \right)^{1 - \frac 1 \alpha}\,.
\end{equation*}

\begin{remark}
	\label{rem:characteristics not level sets}
	Notice that the the generalised characteristics are not the level sets of $m$.
\end{remark}
\normalcolor

These solutions are well defined, for a given $u_0$, as long as the characteristics do not cross.

\begin{proposition}
	Let $u_0$ be non-decreasing and $C^1$. Then, there is a classical global solution of the mass equation, given by characteristics.
\end{proposition}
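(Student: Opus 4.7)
The plan is to apply the method of characteristics to the first-order Hamilton--Jacobi-type equation \eqref{eq:mass}. Emanating from each $\rho_0\geq 0$, the characteristic is the straight line $\rho = X(t,\rho_0) := \rho_0 + \alpha t\, m_0(\rho_0) u_0(\rho_0)^{\alpha-1}$, along which $u$ and $m$ are prescribed by \eqref{eq:characteristics solutions} and the displayed formula following it. Given these explicit formulas, the only thing to verify in order to obtain a global classical solution is that these characteristics do not cross for any $t>0$, and that the associated map is a diffeomorphism.

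First, I would establish non-crossing by a pure monotonicity argument that does not involve differentiating $u_0^{\alpha-1}$. Since $u_0\geq 0$, the mass $m_0(\rho_0) = \int_0^{\rho_0}u_0(s)\,\diff s$ is non-decreasing in $\rho_0$; since $u_0$ is non-decreasing and $\alpha>1$, so is $u_0^{\alpha-1}$. Hence the product $m_0(\rho_0)u_0(\rho_0)^{\alpha-1}$ is a product of two non-negative non-decreasing functions, and is itself non-decreasing. Consequently $X(t,\cdot)$ is strictly increasing for every $t\geq 0$, so distinct characteristics never meet.

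Next, I would check that $X(t,\cdot)$ is a $C^1$-diffeomorphism. Using $m_0' = u_0$ one computes
\begin{equation*}
\partial_{\rho_0}X(t,\rho_0) \;=\; 1 + \alpha t\, u_0(\rho_0)^\alpha + \alpha(\alpha-1)\, t\, m_0(\rho_0)\, u_0(\rho_0)^{\alpha-2} u_0'(\rho_0) \;\geq\; 1,
\end{equation*}
since $u_0'\geq 0$ and $\alpha>1$. Hence the inverse $\rho\mapsto \rho_0(t,\rho)$ is $C^1$, and one defines
\begin{equation*}
m(t,\rho) \;:=\; m_0\bigl(\rho_0(t,\rho)\bigr)\, \bigl(1+\alpha u_0(\rho_0(t,\rho))^\alpha t\bigr)^{1-\frac{1}{\alpha}}.
\end{equation*}
By composition, $m$ is $C^1$, and the derivation of \eqref{eq:characteristics}--\eqref{eq:characteristics solutions} from the Hamilton--Jacobi characteristic system for $H(m,p)=mp^\alpha$ shows that $m_t + m(m_\rho)^\alpha = 0$ holds along every characteristic, hence everywhere.

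The main technical subtlety I expect is in the Jacobian formula when $1<\alpha<2$ at points where $u_0(\rho_0)=0$: there the term $u_0^{\alpha-2}u_0'$ may blow up, so $X(t,\cdot)$ need not be $C^1$ across the vanishing set of $u_0$. In the cases $u_0>0$ everywhere, or $\alpha\geq 2$, no issue arises; otherwise one must check separately that $m$ extends classically across the free boundary $\{u_0=0\}$, typically by extending by $0$ and using an appropriate compatibility condition on $u_0$ near the boundary of its support.
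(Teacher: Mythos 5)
Your construction is essentially the paper's: straight characteristics $P_t(\rho_0)=\rho_0+\alpha\, m_0(\rho_0)u_0(\rho_0)^{\alpha-1}t$, monotonicity giving injectivity, and $m$ defined by composing the explicit formulas with $P_t^{-1}$. The one place you stop short is the $C^1$-regularity of $P_t$ across $\{u_0=0\}$ when $1<\alpha<2$, where you suggest an additional ``compatibility condition'' on $u_0$ might be required. It is not: the stated hypotheses already close this point. Since $u_0\ge 0$ is non-decreasing, its zero set is an initial interval $[0,a]$, so $m_0\equiv 0$ there as well, and for $\rho_0>a$ one has $m_0(\rho_0)=\int_a^{\rho_0}u_0(s)\diff s\le(\rho_0-a)\,u_0(\rho_0)$. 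Hence the potentially singular term in your Jacobian obeys
\begin{equation*}
0\;\le\;\alpha(\alpha-1)\,t\,m_0(\rho_0)\,u_0(\rho_0)^{\alpha-2}u_0'(\rho_0)\;\le\;\alpha(\alpha-1)\,t\,(\rho_0-a)\,u_0(\rho_0)^{\alpha-1}u_0'(\rho_0)\;\longrightarrow\;0
\end{equation*}
as $\rho_0\to a^+$, because $\alpha>1$ and $u_0'$ is continuous, hence bounded near $a$; likewise the difference quotient of $\rho_0\mapsto m_0(\rho_0)u_0(\rho_0)^{\alpha-1}$ at $a$ is bounded by $u_0(\rho_0)^{\alpha}\to 0$. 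So this map is $C^1$ with vanishing derivative at $a$ (matching the left side, where it is identically zero), $P_t$ is globally $C^1$ with $\partial_{\rho_0}P_t\ge 1$ and $P_t(0)=0$, its inverse is $C^1$, and your composed formula for $m$ (which is identically $0$ on $[0,a]$, where the characteristics are vertical) is $C^1$ across the free boundary with no separate extension argument. With this observation your proof is complete and coincides with the paper's; your purely monotonicity-based non-crossing step, which avoids differentiating $u_0^{\alpha-1}$ altogether, is a small but clean variant of the paper's derivative computation.
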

\begin{proof}
	Let $P_t(\rho_0) = \rho_0 + \alpha m(\rho_0) u_0(\rho_0)^{\alpha - 1} t$. Clearly $P_t (0) = 0$. If $u_0$ is non-decreasing, $\frac{d P_t}{d\rho_0} \ge 1$, and hence it is invertible. We construct
		\begin{equation}
		\label{eq:characteristics solutions formal}
	u(t,\rho) =
	\begin{dcases}
		 \left( \left( u_0(P_t^{-1} (\rho)) \right) ^{-\alpha} + \alpha t \right )^{-\frac 1 \alpha} &  \text{if } u_0(P_t^{-1} (\rho)) \ne 0, \\
		 0 & \text{if }  u_0(P_t^{-1} (\rho)) = 0.
		\end{dcases}
	\end{equation}
	It is immediate to see that $u$ is continuous and $C^1$.
\end{proof}

For $0 < \alpha < 1$, in \cite{Carrillo+GV+Vazquez2019} we developed a theory of classical solutions for non-increasing  initial data.  In that case, rarefaction fan tails appeared, which gave rise to classical solutions of the mass equation.
In our present case $\alpha > 1$, it seems that the good data is radially {non-decreasing}, but this is not possible in an $L^1 \cap L^\infty$ theory, unless a jump is introduced.

\subsection{The Rankine-Hugoniot condition}

We will prove in \Cref{sec:monotono non-decreasing with cut-off} that solutions with a jump, given by a Rankine-Hugoniot condition, will be the correct ``stable'' solutions. As in \cite{Carrillo+GV+Vazquez2019}, shocks (i.e. discontinuities) propagate following a Rankine-Hugoniot condition. If $S(t)$ is the position of the shock, we write the continuity of mass condition
\begin{equation*}
m(t, S(t)^- ) = m(t,S(t)^+).
\end{equation*}
Taking a derivative and applying the equation \eqref{eq:mass} we have that
\begin{equation}
\label{eq:RH}
\frac{\diff S}{\diff t} (t) = m(t, S(t)) \frac{u(t, S(t)^+)^\alpha - u(t,S(t)^- )^\alpha}{u(t, S(t)^+) - u(t,S(t)^- )}.
\end{equation}

\begin{remark}
	Notice that, in the case of the vortex the Rankine-Hugoniot condition determines precisely the support. In particular, we have $u(t,S(t)^+) = 0$ and $u(t,S(t)^-) = (u_0^{-\alpha} + \alpha t)^{-\frac 1 \alpha}$ so \eqref{eq:RH} is precisely \eqref{eq:vortex RH}. In fact, the vortex is simply a cut-off of the Friendly Giant with a free-boundary determined by the Rankine-Hugoniot condition.
\end{remark}

\subsection{Local existence of solutions by characteristics}

\begin{theorem}
	\label{thm:solution by characteristics}
	Let $0 \le u_0 \in L^1 (\mathbb R^n) \cap L^\infty (\mathbb R^n) $ be radial and such that $u_0^{\alpha - 1}$ is Lipschitz. Then, there exists a small time $T>0$ and a classical solution of the mass equation given by characteristics defined for $t \in [0,T]$.
\end{theorem}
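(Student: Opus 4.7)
The plan is to extend the construction of the preceding proposition by dropping global monotonicity of $u_0$ and instead exploiting the smallness of $t$ to keep the characteristic map injective. Following \eqref{eq:characteristics}, set
\begin{equation*}
V(\rho_0) := \alpha\, m_0(\rho_0)\, u_0(\rho_0)^{\alpha-1}, \qquad P_t(\rho_0) := \rho_0 + t\, V(\rho_0),
\end{equation*}
where $m_0$ is the initial mass in volume coordinates, so $m_0' = u_0$ and $V(0)=0$.

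The core step is a global Lipschitz bound on the characteristic speed $V$ on $[0,\infty)$. Computing formally,
\begin{equation*}
V'(\rho_0) = \alpha\, u_0(\rho_0)^{\alpha} + \alpha\, m_0(\rho_0)\, (u_0^{\alpha-1})'(\rho_0),
\end{equation*}
the first term is bounded by $\alpha\|u_0\|_\infty^\alpha$ because $u_0\in L^\infty$, while the second is bounded by $\alpha\,\|u_0\|_{L^1}\,\operatorname{Lip}(u_0^{\alpha-1})$, since $m_0\le \|u_0\|_{L^1}$ and $u_0^{\alpha-1}$ is Lipschitz by hypothesis. Hence $\operatorname{Lip}(V)\le L < \infty$. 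Choose any $T \in (0,1/L)$. Then for every $t\in[0,T]$ and all $\rho_0,\tilde\rho_0\ge 0$,
\begin{equation*}
(1 - tL)\,|\rho_0 - \tilde\rho_0| \le |P_t(\rho_0) - P_t(\tilde\rho_0)| \le (1 + tL)\,|\rho_0 - \tilde\rho_0|,
\end{equation*}
so $P_t$ is injective. Combined with $P_t(0)=0$, continuity, and $V$ bounded (so $P_t(\rho_0)\to\infty$), we obtain a bi-Lipschitz homeomorphism $P_t\colon[0,\infty)\to[0,\infty)$.

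Once the characteristics are inverted, define $\rho_0(t,\rho):=P_t^{-1}(\rho)$ and
\begin{equation*}
m(t,\rho) := m_0(\rho_0(t,\rho))\,\bigl(1 + \alpha\, u_0(\rho_0(t,\rho))^\alpha\, t\bigr)^{1-1/\alpha},
\end{equation*}
together with the companion formula \eqref{eq:characteristics solutions formal} for $u$. The identities $m(0,\cdot)=m_0$ and $m(t,0)=0$ follow from $P_0=\mathrm{id}$ and $V(0)=0$. Since $m_0$ is Lipschitz and $P_t^{-1}$ is Lipschitz on $[0,T]\times[0,\infty)$, the constructed $m$ is Lipschitz in both variables, and differentiating along characteristics by the chain rule one checks at every point of differentiability of $u_0$ that $m_t + m\,(m_\rho)^\alpha = 0$, which is the classical-by-characteristics sense of solution used in the previous proposition. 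The main obstacle is precisely establishing the global Lipschitz estimate on $V$: the hypothesis that $u_0^{\alpha-1}$ (rather than $u_0$) is Lipschitz is exactly what is required, because for $1<\alpha<2$ a merely Lipschitz $u_0$ would yield only H\"older regularity of $u_0^{\alpha-1}$ near the free boundary $\{u_0=0\}$, and characteristics starting there could cross at arbitrarily small times.
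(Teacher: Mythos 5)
Your proof is correct and follows essentially the same route as the paper: both bound the characteristic speed $\alpha m_0 u_0^{\alpha-1}$ using $u_0\in L^1\cap L^\infty$ together with the Lipschitz hypothesis on $u_0^{\alpha-1}$ (the paper via the pointwise bound $\frac{\diff P_t}{\diff \rho_0}\ge 1-\alpha tL$, you via the equivalent bi-Lipschitz estimate on $P_t$), making $P_t$ a bijection of $[0,+\infty)$ for $t<1/(\alpha L)$ and then defining $u$ and $m$ by the characteristic formulas. The only minor difference is at the end: the paper notes that $u_0$ is continuous (since $u_0^{\alpha-1}$ is Lipschitz), so $u=m_\rho$ is continuous by composition and $m$ is genuinely $\mathcal C^1$, whereas you stop at Lipschitz regularity and verify the equation only at points of differentiability of $u_0$; adding the continuity-of-$u$ observation upgrades your conclusion to the paper's classical statement.
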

\begin{proof}
	The solution given by \eqref{eq:characteristics} and \eqref{eq:characteristics solutions} is well defined as long as the characteristics cover the whole space, and do not cross. This is equivalent to $P_t(\rho_0) = \rho_0 + \alpha m(\rho_0) u_0(\rho_0)^{\alpha - 1} t$ being a bijection $[0,+\infty) \to [0,+\infty)$. Again, we construct \eqref{eq:characteristics solutions formal}.
	Since $P_t(0) = 0$, it suffices to prove that $\frac{\diff P_t}{\diff \rho_0} \ge c_0 > 0$. We take the derivative explicitly and find that
	\begin{align*}
		\frac{\diff P_t}{\diff \rho_0} &= 1 + \alpha t \left(  \frac{\diff m_0}{ \diff \rho_0 } u_0^{\alpha - 1} +m_0  \frac{\diff }{\diff \rho_0} (u_0^{\alpha - 1})\right) \\
		&= 1 + \alpha t \left(  u_0^{\alpha } + m_0 \frac{\diff }{\diff \rho_0} (u_0^{\alpha - 1})\right).
	\end{align*}
	Due to the hypothesis
	\begin{equation*}
		L \defeq \sup_{\rho_0 \ge 0} \left|   u_0^{\alpha } + m_0 \frac{\diff }{\diff \rho_0} (u_0^{\alpha - 1}) \right| < \infty,
	\end{equation*}
	and we have that $\frac{\diff P_t}{\diff \rho_0}  \ge 1 - \alpha t L $ which is strictly positive if $t \le T < 1 / ({\alpha L })$.
	Since $u_0^{\alpha - 1}$ is Lipschitz and bounded, then $1 - \alpha T L \le \frac{\diff P_t}{\diff \rho_0} \le C$ is Lipschitz. Hence, $P_t^{-1}$ is Lipschitz in $\rho_0$. Also, it is easy to see that $P_t^{-1}$ is continuous in $t$.
	Since it is immediate to check that $u$ is continuous by composition, we have that $m$ is of class $C^1$, and the proof is complete.
\end{proof}

\begin{corollary}[Waiting time]
	\label{cor:waiting time charact}
	Let $0 \le u_0 \in L^1 (\mathbb R^n) \cap L^\infty (\mathbb R^n) $ be radial and such that $u_0^{\alpha - 1}$ is Lipschitz. Then, there is a short time $T > 0$ such that, if $\supp u_0 \subset \overline{B_R}$, then any classical solution of the mass equation satisfies $\supp u(t,\cdot) \subset \overline{B_R}$ for $t < T$.
\end{corollary}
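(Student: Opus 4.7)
The plan is to read off the conclusion directly from the characteristic representation of the solution supplied by \Cref{thm:solution by characteristics}. Let $\rho_R \defeq d\omega_d R^d$ denote the edge of $\supp u_0$ expressed in volume coordinates, so $u_0(\rho_0)=0$ for all $\rho_0 \ge \rho_R$. Since the hypotheses on $u_0$ coincide with those of \Cref{thm:solution by characteristics}, the same $T>0$ produced there gives us a classical solution on $[0,T]$ with $P_t(\rho_0)=\rho_0+\alpha m_0(\rho_0) u_0(\rho_0)^{\alpha-1}t$ a bijection of $[0,+\infty)$ onto itself, and $u(t,\rho)$ given by the formula \eqref{eq:characteristics solutions formal}.

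The key observation is that for $\alpha>1$ the factor $u_0(\rho_0)^{\alpha-1}$ vanishes identically on $\{\rho_0 \ge \rho_R\}$ (and this is a genuine value, not only a limit, since Lipschitz continuity of $u_0^{\alpha-1}$ together with $u_0^{\alpha-1}\equiv 0$ beyond $\rho_R$ forces the value $0$ at $\rho_R$ itself). Consequently $P_t(\rho_0)=\rho_0$ for every $\rho_0 \ge \rho_R$, so the characteristics emanating from the complement of the support are stationary. In particular $P_t(\rho_R)=\rho_R$, and because $P_t$ is strictly increasing (its derivative is bounded below by $1-\alpha tL>0$) we have $P_t^{-1}(\rho) \ge \rho_R$ whenever $\rho \ge \rho_R$.

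Applying this to \eqref{eq:characteristics solutions formal}, for every $\rho \ge \rho_R$ and every $t \in [0,T]$ we obtain $u_0(P_t^{-1}(\rho))=0$ and therefore $u(t,\rho)=0$. Translating back to the spatial radius $r$ via $\rho=d\omega_d r^d$, this says $u(t,x)=0$ for $|x|\ge R$, i.e.\ $\supp u(t,\cdot)\subset \overline{B_R}$ for $t\in[0,T]$.

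I do not expect any real obstacle here: the corollary is essentially a bookkeeping consequence of the characteristic formula, and the only subtle point is invoking the Lipschitz assumption to guarantee $u_0(\rho_R)^{\alpha-1}=0$ so that the boundary characteristic is genuinely frozen. The statement that the support \emph{cannot} spread before time $T$ should not be confused with the sharp waiting-time criterion \eqref{aux}; here we are simply exploiting that, under the standing regularity of $u_0^{\alpha-1}$, the characteristic field degenerates at the free boundary.
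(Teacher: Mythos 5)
Your argument is correct and is essentially the paper's own proof: the paper likewise observes that outside $\supp u_0$ the characteristics are stationary, $P_t(\rho_0)=\rho_0$, so that for $\rho$ beyond the support edge the formula \eqref{eq:characteristics solutions formal} gives $u(t,\rho)=0$ on the existence interval $[0,T]$ of \Cref{thm:solution by characteristics}. Your extra remarks (vanishing of $u_0^{\alpha-1}$ at the boundary point and monotonicity of $P_t$ giving $P_t^{-1}(\rho)\ge\rho_R$) are just explicit fill-ins of the same reasoning.
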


\begin{proof}
	Notice that, if $u_0$ is compactly supported, then outside the support the characteristics are given by $P_t (\rho_0) = \rho_0$. As long as the solution is given by characteristics, if $\supp u_0 \subset \overline{B_R}$ for $\rho > \omega_d R^d$, then $u(t,\rho) = 0$.
\end{proof}

\begin{remark}
	This effect of preservation of the support for a finite time is known as \emph{waiting time}.
	 In \Cref{sec:waiting time} we will show this holds true as long as the solution is smooth. In \Cref{sec:waiting time viscous} we show that this waiting time effect must be finite. This will lead us to show that solutions must lose regularity.
\end{remark}
\normalcolor

\begin{remark}
	Notice that higher regularity of $u_0$ is preserved by characteristics. Taking a derivative
	\begin{align*}
		\frac{\diff u}{\diff \rho} &= \left( \left( u_0(P_t^{-1} (\rho)) \right) ^{-\alpha} + \alpha t \right )^{-1 -\frac 1 \alpha} u_0^{-1-\alpha} \frac{\diff u_0}{\diff \rho_0} \frac{\diff P_t^{-1}}{\diff \rho.} \\
		&= \left(  1 + \alpha t u_0(P_t^{-1} (\rho))  ^\alpha  \right )^{-1 -\frac 1 \alpha}  \frac{\diff u_0}{\diff \rho_0} \frac{1} { \frac{\diff P_t}{\diff \rho_0}} \\
		&=\dfrac{	\left(  1 + \alpha t u_0(P_t^{-1} (\rho)) ) ^\alpha  \right )^{-1 -\frac 1 \alpha}  } {   1 + \alpha t \left(  u_0^{\alpha } + m_0 \frac{\diff }{\diff \rho_0} (u_0^{\alpha - 1})\right)   } \frac{\diff u_0}{\diff \rho_0}
	\end{align*}
	It is easy to see that, for small time, if $u_0$ is smooth enough, then $u$ is of class $\mathcal C^1$.
\end{remark}

\begin{remark}
	\label{rem:no sol by characteristics}
	The condition $u_0^{\alpha - 1}$ Lipschitz is sharp. Let us take, for $\varepsilon > 0$
	\begin{equation}
		\label{eq:no sol by characteristics}
		u_0 (\rho) = (c_0 - \rho)_+^{\frac {1-\varepsilon} {\alpha -1} }
	\end{equation}
	and let us show that characteristics cross for all $t>0$.
	Looking at the characteristics for $\rho_0 = c_0 -\delta$ with $\delta$ positive but small (so that $m_0 (\rho) > M/2$) we have that
	\begin{align*}
		\rho &= \rho_0 + \alpha m_0 (\rho) (c_0 - \rho_0)_+^{1 - \varepsilon} t  \\
		&\ge  c_ 0 - \delta + \alpha \frac{M} 2  \delta ^{1 - \varepsilon} t
	\end{align*}
	But for $\delta < \left(  \frac{\alpha M t}{2} \right)^{ \frac 1 {\varepsilon}}$ we have $\rho > c_0$. But this is not possible, since it must have crossed the characteristic $\rho = c_0$ coming from $\rho_0 = c_0$. No solutions by characteristics can exist.
\end{remark}

The crossing of characteristics will immediately lead to the formation of shock waves. The shock waves will be led by a Rankine-Hugoniot condition as above.

\begin{remark}
	As shown in \Cref{rem:no sol by characteristics}, with initial datum \eqref{eq:no sol by characteristics} we cannot expect solutions by characteristics. We could potentially paste solutions by characteristics on either side of a shock.
	We will show that this is the case, and we will show that solutions with bounded and compactly supported initial data will indeed produce a propagating shock at the end of their support, possibly with a waiting time (see the main results in \Cref{sec:viscosity}).
\end{remark}

\normalcolor

\subsection{Explicit Ansatz with waiting time}
\label{sec:waiting time}

For fixed mass $M$ and prescribed support of $u = m_\rho$
we can construct local classical solutions with waiting time $T$. We will prove that
\begin{equation}
	\label{eq:Ansatz}
	m(t, \rho)  = \left(  M^{\frac{\alpha}{\alpha - 1}} - \alpha^{\frac 1 {\alpha - 1}} \frac{(c_0 - \rho)_+^{\frac{\alpha}{\alpha - 1}}}{(T-t)^{\frac{1}{\alpha - 1}}}    \right)^{\frac{\alpha - 1}{\alpha}},
	\qquad  \text{if } t < T \text{ and } \rho > \left(c_0 - \alpha ^{\frac 1 \alpha} M (T-t)^{\frac 1 \alpha}\right)_+
\end{equation}
is a classical solution of $m_t + m_\rho^\alpha m = 0$.
We represent this function in \Cref{fig:Ansatz solution}. We will extend this function by zero for $ \rho \le \left(c_0 - \alpha ^{\frac 1 \alpha} M (T-t)^{\frac 1 \alpha}\right)_+$ to construct a viscosity subsolution of the mass equation.
\begin{figure}[H]
	\centering
	\includegraphics[width=.7\textwidth]{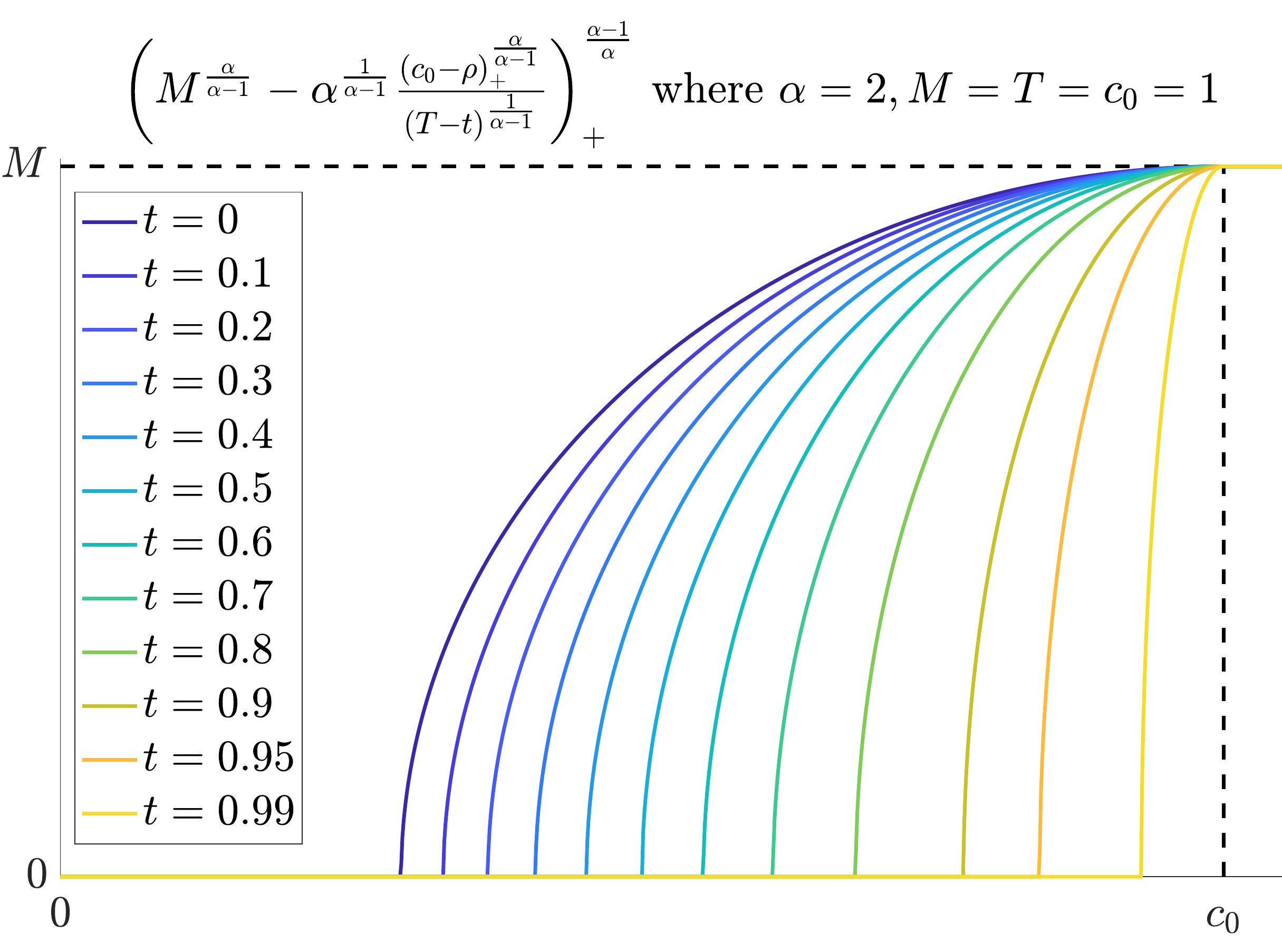}
	\caption{Ansatz solution}
	\label{fig:Ansatz solution}
\end{figure}

The intuition to construct this kind of explicit Ansatz in ``separated variables'' is well known in the context of nonlinear PDEs of power type (see, e.g., \cite{Vazquez1984}). One starts with a general formula of the type
\begin{equation*}
	m(t, \rho) =
	\left(  M^{\frac 1 {\beta}} - c \frac{(c_0 - \rho)^{\gamma}}{(T-t)^\delta}    \right)^{\beta}
\end{equation*}
and we match the exponents through the scaling properties of the equation. By taking $\beta = \frac{\alpha - 1}{ \alpha }$, $\gamma = \frac{\alpha}{\alpha - 1} > 1$ and $\delta = \frac{1}{\alpha-1}$ this gives
\begin{equation*}
		m_t + m m_\rho^\alpha = \left(  M^{\frac{1}{\beta}} - c \frac{(c_0 - \rho)^{\gamma}}{(T-t)^\delta}    \right)^{\beta - 1}  \frac{ (c_0 - \rho)^{\gamma}}{(T-t)^{\delta+1}}  \left[   - c \delta   \beta  + (c \gamma 	\beta)^\alpha   \right].
\end{equation*}
Hence, the sign is that of
$
	 - c \delta   \beta  + (c \gamma 	\beta)^\alpha$, which in our case is precisely $- \frac {c} \alpha + c ^\alpha .
$
\begin{remark}
We have also checked the following properties
\begin{enumerate}
	\item $m \in \mathcal C^1 $ in its domain of definition, since $\gamma \ge 1$, the matching at $\rho = c_0$ is guaranteed by the explicit formula for $m_\rho$.
	\item The domain of definition of $m$ depends on the value of {$T$}, and $\rho = 0$ may not be contained in the domain.
	
	\item It is easy to check that the function $u = m_\rho$ associate to the Ansatz satisfies
	\begin{equation}
		u(t,\rho) = \frac{c}{m(t,\rho)^{\frac 1 {\alpha-1}} (T-t)^{\frac{1}{\alpha-1}}} (c_0 - \rho)_+^{\frac 1 {\alpha - 1}}.
	\end{equation}
	Notice that $u^{\alpha-1}$ is a Lipschitz function of $\rho$.
	The easiest way to check this expression is by using
	\begin{equation*}
		m^{\frac { \alpha }{\alpha - 1}} = M^{\frac{\alpha}{\alpha - 1}} - c \frac{(c_0 - \rho)_+^{\frac{\alpha}{\alpha - 1}}}{(T-t)^{\frac{1}{\alpha - 1}}} ,
	\end{equation*}
	taking a derivative in $\rho$, and solving for $m_\rho$.
\item
	Conversely, notice that the condition $u_0^{\alpha-1}$ in \Cref{thm:solution by characteristics} is sharply satisfied by
	\begin{equation*}
		u_0 (\rho) \sim (c_0 - \rho)_+^{ \frac 1 {\alpha - 1}} .
	\end{equation*}
	If this is the behaviour of $u_0$ then close to $\rho = c_0$ we have that
	\begin{equation*}
		(m_0^{ \frac{\alpha }{\alpha  -1} } )_ \rho = \frac{\alpha}{\alpha-1} m_0^{ \frac{1}{\alpha  -1} } (m_0)_\rho = \frac{\alpha}{\alpha-1} m^{ \frac{1}{\alpha  -1} } u_0 \sim \frac{\alpha}{\alpha-1} M^{ \frac{1}{\alpha  -1} }  (c_0 - \rho)_+^{ \frac 1 {\alpha - 1}}
	\end{equation*}
	and, integrating in $[\rho,c_0]$
	\begin{equation*}
		M^{ \frac{\alpha }{\alpha  -1} } - m_0 ^{ \frac{\alpha }{\alpha  -1} } \sim M ^{ \frac{1 }{\alpha  -1} } (c_0 - \rho)_+^{ \frac{\alpha }{\alpha  -1} }.
	\end{equation*}
	Solving for $m$, we precisely recover the Ansatz
	\begin{equation*}
		m_0 \sim \left( M ^{ \frac{\alpha }{\alpha  -1} }  - M ^{ \frac{1 }{\alpha  -1} } (c_0 - \rho)_+^{ \frac{\alpha }{\alpha  -1} } \right)^{ \frac{\alpha - 1}{\alpha}}.
	\end{equation*}
\end{enumerate}
\end{remark}

\subsection{A change of variable to a Hamilton-Jacobi equation}

The change in variable $m = \theta^{\frac{\alpha - 1}{\alpha}}$ leads to the equation
\begin{equation}
\theta_t + \left( \frac{\alpha - 1}{\alpha } \right) ^{\alpha-1} \theta_\rho^\alpha = 0.
\end{equation}
This equation is of classical Hamilton-Jacobi form, and falls in the class studied by Crandall and Lions in the famous series of papers (see, e.g. \cite{Crandall1983}). Letting $w = \theta_\rho$ we recover a Burguer's conservation equation
\begin{equation}
\label{eq:problem w}
w_t + \left( \frac{\alpha - 1}{\alpha } \right) ^{\alpha-1} (w^\alpha)_\rho  = 0.
\end{equation}
The theory of existence and uniqueness of entropy solutions for this problem is well know (see, e.g. \cite{Benilan1996,Carrillo1999,Kruzkov1970} and related results in \cite{Escobedo+Vazquez+Zuazua1993,Lions+Souganidis+Vazquez2012}). Notice that the relation between $u$ and $w$ is somewhat difficult
\begin{equation*}
w(t, \rho) = \frac{\diff }{\diff \rho} \left[  \left(   \int_0^\rho u(t, \sigma) \diff \sigma   \right) ^{\frac {\alpha - 1}{\alpha}} \right].
\end{equation*}
\begin{remark}
	Notice that, for $\alpha < 1$ this change of variable does not make any sense since as $\rho \to 0$ we have that $m \to 0$ so $\theta \to +\infty$. We would be outside the $L^1 \cap L^\infty$ framework.
\end{remark}

\section{Viscosity solutions of the mass equation}
\label{sec:viscosity}
\subsection{Existence, uniqueness and comparison principle}

The Crandall-Lions theory of viscosity solutions developed in \cite{Carrillo+GV+Vazquez2019} for $\alpha < 1$, works also for the case $\alpha \ge 1$ without any modifications. Since $m_\rho = u \ge 0$ we can write the equation as
\begin{equation}
\label{eq:mass Burgers rho +}
m_t + (m_\rho)_+^\alpha m = 0.
\end{equation}
Then, the Hamiltonian $H(z,p_1,p_2) = (p_2)^\alpha z$ is defined and non-decreasing everywhere. We write the initial and boundary conditions
\begin{equation}
\label{eq:mass equation}
\begin{dcases}
m_t + (m_\rho)_+^\alpha m = 0 & t, \rho > 0 \\
m(t,0) = 0 & t > 0 \\
m(0, \rho) = m_0(\rho) & \rho > 0.
\end{dcases}
\end{equation}
The natural setting is with $m_0$ Lipschitz (i.e.\ $m_\rho = u \in L^\infty$) and bounded (i.e.\ $u \in L^1$).
We introduce the definition of viscosity solution for our problem and some notation.
\begin{definition}
	Let $f: \Omega \subset \mathbb R^n \to \mathbb R$. We define the Fréchet subdifferential and superdifferential
	\begin{align*}
	D^- u(x) &= \left \{ p \in \mathbb R^n : \liminf_{y \to x} \frac{u(y) - u(x) - p (y-x)}{|y-x|} \ge 0 \right\}\\
	D^+ u(x) &= \left \{ p \in \mathbb R^n : \limsup_{y \to x} \frac{u(y) - u(x) - p (y-x)}{|y-x|} \le 0 \right\}.
	\end{align*}
	
\end{definition}

\begin{definition}
	\label{defn:viscosity sub, super and solution}
	We say that a continuous function $m \in \mathcal C([0,+\infty)^2)$ is a:
	\begin{itemize}
		\item[$\bullet$] viscosity subsolution of \eqref{eq:mass equation} if
		\begin{equation}
		\label{eq:viscosity subsolution}
		p_1 + (p_2)_+^\alpha m(t, \rho) \le  0, \qquad \forall (t, \rho) \in \mathbb R_+^2 \text{ and } (p_1,p_2) \in D^+ m(t, \rho).
		\end{equation}
		and $m(0,\rho) \le m_0(\rho)$ and $m(t,0) \le  0$.

		\item[$\bullet$]  a viscosity supersolution of \eqref{eq:mass equation} if
		\begin{equation*}
		p_1 + (p_2)_+^\alpha m(t, \rho) \ge   0, \qquad \forall (t, \rho) \in \mathbb R_+^2 \text{ and } (p_1,p_2) \in D^- m(t, \rho).
		\end{equation*}
		and $m(0,\rho) \ge m_0(\rho)$ and $m(t,0) \ge 0$.
		
		\item[$\bullet$] a viscosity solution of \eqref{eq:mass equation} if it is both a sub and supersolution.
	\end{itemize}
\end{definition}
	
	The main results in \cite{Carrillo+GV+Vazquez2019} show the comparison principle and the well-posedness result for viscosity solutions of \eqref{eq:mass equation}.
	\begin{theorem}
		\label{thm:comparison principle m}
		Let $m$ and $M$ be uniformly continuous sub and supersolution of \eqref{eq:mass equation} in the sense of Definition~\ref{defn:viscosity sub, super and solution}. Then $m \le M$.
\end{theorem}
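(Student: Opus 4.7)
The plan is to argue by contradiction via the classical Crandall--Lions doubling of variables technique, as carried out in \cite{Carrillo+GV+Vazquez2019} for $\alpha<1$. The key structural observation is that the Hamiltonian $H(z,p_1,p_2)=p_1+(p_2)_+^\alpha z$ is continuous in $(p_1,p_2)$ and non-decreasing in $z$ (since $(p_2)_+^\alpha\ge 0$) for any $\alpha>0$; moreover, $m,M\ge 0$ can be deduced from the boundary and initial conditions plus the viscosity inequalities. Hence the argument goes through without modification, and I would just reproduce it with the adjustments needed for the unbounded quadrant $\mathbb R_+^2$.

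Suppose, for contradiction, that $\sup(m-M)>0$; since the parabolic boundary conditions give $m\le M$ on $\{t=0\}\cup\{\rho=0\}$, the supremum is attained (if at all) in the open quadrant. For small parameters $\varepsilon,\gamma,\eta>0$ and large $T>0$, the plan is to study
\[
\Phi(t,\rho,s,\sigma) = m(t,\rho) - M(s,\sigma) - \frac{(t-s)^2+(\rho-\sigma)^2}{2\varepsilon^2} - \gamma(\rho+\sigma) - \frac{\eta}{T-t} - \frac{\eta}{T-s}
\]
on $\big([0,T)\times[0,+\infty)\big)^2$. The spatial penalty $\gamma(\rho+\sigma)$ compactifies the unbounded domain, and the time penalty $\eta/(T-\cdot)$ will play its customary role of forcing a \emph{strict} subsolution inequality. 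Uniform continuity of $m,M$ ensures a maximizer $(t_*,\rho_*,s_*,\sigma_*)$. Standard penalization lemmas then give, for $\varepsilon,\gamma,\eta$ small and $T$ suitably large, that (i) the max is strictly positive, (ii) it is attained away from $\{t=0\}\cup\{\rho=0\}$, and (iii) the quadratic penalty contributions $(t_*-s_*)^2/\varepsilon^2$, $(\rho_*-\sigma_*)^2/\varepsilon^2$ vanish as $\varepsilon\to 0$.

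Applying the sub- and supersolution inequalities at $(t_*,\rho_*)$ and $(s_*,\sigma_*)$ with the smooth part of $\Phi$ as test function yields, writing $q_\pm:=(\rho_*-\sigma_*)/\varepsilon^2\pm\gamma$,
\[
\frac{t_*-s_*}{\varepsilon^2} + \frac{\eta}{(T-t_*)^2} + (q_+)_+^\alpha\, m(t_*,\rho_*) \le 0,\qquad
\frac{t_*-s_*}{\varepsilon^2} - \frac{\eta}{(T-s_*)^2} + (q_-)_+^\alpha\, M(s_*,\sigma_*) \ge 0.
\]
Subtracting,
\[
\frac{\eta}{(T-t_*)^2}+\frac{\eta}{(T-s_*)^2}+(q_+)_+^\alpha m(t_*,\rho_*) - (q_-)_+^\alpha M(s_*,\sigma_*)\le 0.
\]
If $q_+\le 0$ (equivalently $q_-\le -2\gamma\le 0$), both Hamiltonian terms vanish and the $\eta$-contribution is strictly positive, contradiction. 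Otherwise, as $\gamma,\varepsilon\to 0$ with $\eta$ fixed, the coefficients $(q_\pm)_+^\alpha$ share a common positive limit, while $m(t_*,\rho_*)-M(s_*,\sigma_*)$ stays bounded below by half of $\sup(m-M)$; the Hamiltonian difference therefore has a nonnegative limit, and the $\eta$-terms again force a contradiction. Sending $\eta,\gamma\to 0$ and then $T\to\infty$ closes the argument.

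The main obstacle is that the Hamiltonian is only \emph{weakly} monotone in $z$, as the coefficient $(p_2)_+^\alpha$ can vanish when $p_2\le 0$, so a direct comparison-by-monotonicity argument does not close. The $\eta/(T-t)$ perturbation producing a strict viscosity subsolution is precisely what bypasses this degeneracy. A secondary technical point is the unbounded spatial domain: the $\gamma(\rho+\sigma)$ penalty localises the maximizer, and one must verify via the uniform continuity assumption that $\gamma(\rho_*+\sigma_*)\to 0$ as $\gamma\to 0$, so that the perturbation does not destroy the positivity of the maximum.
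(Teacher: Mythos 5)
Your overall strategy is exactly the one the paper relies on: the paper does not reprove the comparison principle but invokes the doubling-of-variables argument of \cite{Carrillo+GV+Vazquez2019}, observing only that the Hamiltonian $(p_2)_+^\alpha z$ is well defined and non-decreasing in $z$; your penalized doubling with the $\eta/(T-t)$ and $\gamma(\rho+\sigma)$ terms is that same classical Crandall--Lions route, and your two viscosity inequalities and their difference are computed correctly.

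However, the step where you close the contradiction has a genuine gap. First, the auxiliary claim that ``$m,M\ge 0$ can be deduced from the boundary and initial conditions plus the viscosity inequalities'' is false for subsolutions: any negative constant is a subsolution in the sense of Definition~\ref{defn:viscosity sub, super and solution} (its superdifferential is $\{(0,0)\}$), so no sign information on $m$ is available, and proving $M\ge0$ amounts to a comparison with the zero subsolution, i.e.\ is circular at this stage. Second, and more importantly, the assertion that ``as $\gamma,\varepsilon\to0$ the coefficients $(q_\pm)_+^\alpha$ share a common positive limit'' is unjustified: $(\rho_*-\sigma_*)/\varepsilon^2$ need not converge along the joint limit, need not be positive, and for merely uniformly continuous (non-Lipschitz) solutions it can blow up as $\varepsilon\to0$, in which case $(q_+)_+^\alpha m-(q_-)_+^\alpha M$ is an indeterminate difference of large quantities when $m$ or $M$ may be negative or unbounded. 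The standard repair avoids any limit claim: write $(q_+)_+^\alpha m-(q_-)_+^\alpha M=(q_+)_+^\alpha\,(m-M)+\bigl((q_+)_+^\alpha-(q_-)_+^\alpha\bigr)M$; the first term is nonnegative at the maximizer (there $m(t_*,\rho_*)\ge M(s_*,\sigma_*)$ and $(q_+)_+^\alpha\ge0$), and the second is bounded by $\alpha\,(q_+)_+^{\alpha-1}\,2\gamma\,\|M\|_\infty$ with $q_+\le C\varepsilon^{-1}+\gamma$, where boundedness of the solutions (the BUC setting of the paper, which also guarantees your maximizer exists despite the weak $\gamma$-localization) and the crude penalty estimate give $|\rho_*-\sigma_*|\le C\varepsilon$. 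Sending $\gamma\to0$ with $\varepsilon,\eta,T$ fixed then contradicts the fixed positive term $\eta/T^2$, and the conclusion follows by letting $\eta\to0$, $T\to\infty$ at the end. With this ordering of limits and this algebraic decomposition your argument is sound; as written, the key inequality does not close.
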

	We will denote by $\BUC$ the space of bounded uniformly continuous functions.
	\begin{theorem}	
		If $m_0 \in \BUC([0,+\infty))$ be non-decreasing such that $m_0(0) = 0$. Then, there exists a unique bounded and uniformly continuous viscosity solution.
		Furthermore, we have that
		\begin{equation}
		\label{eq:estimates on solution}
		\| m (t , \cdot) \|_{\infty} = \lim_{\rho \to +\infty} m(t,\rho) = \|m_0\|_{\infty}, \qquad \|m_\rho(t, \cdot) \|_\infty \le \| (m_0)_\rho \|_\infty.
		\end{equation}
		If $m_0$ is Lipschitz, then so is $m$.
	\end{theorem}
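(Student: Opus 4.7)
Uniqueness in $\BUC$ is immediate from the comparison principle of Theorem~\ref{thm:comparison principle m} applied to any two solutions agreeing at $t=0$ and $\rho=0$. For existence the plan is to apply Perron's method: define
$$\widetilde m(t,\rho) \defeq \sup\bigl\{w(t,\rho) : w \text{ is a viscosity subsolution with } w \le \|m_0\|_\infty\bigr\},$$
and show that $\widetilde m$ is a viscosity solution by the standard Ishii argument (as carried out in \cite{Carrillo+GV+Vazquez2019} for $\alpha<1$). What then remains is to exhibit barriers that force $\widetilde m(0,\rho) = m_0(\rho)$ with a controlled modulus of continuity.

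\textbf{Barriers and approximation.} The constant $\|m_0\|_\infty$ is a trivial supersolution, and $m_0(\rho)$ itself is a time-independent supersolution since $((m_0)_\rho)_+^\alpha\, m_0 \ge 0$. When $m_0$ is Lipschitz with constant $L$, I would take $(m_0(\rho) - Kt)_+$ with $K = L^\alpha \|m_0\|_\infty$ as a subsolution: at smooth points where it is positive,
$$-K + ((m_0)_\rho)^\alpha\,(m_0 - Kt) \le -K + L^\alpha \|m_0\|_\infty = 0,$$
and the viscosity inequality at the switching set $\{m_0(\rho)=Kt\}$ holds automatically because any vector in $D^+$ at a local minimum of a nonnegative function has the correct sign. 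These barriers sandwich $\widetilde m$ at $t=0$ with linear modulus $Kt$, yielding a bounded uniformly continuous solution. For general nondecreasing $m_0 \in \BUC$ with $m_0(0)=0$, I would approximate by Lipschitz $m_0^\varepsilon$ with $\|m_0^\varepsilon - m_0\|_\infty \to 0$; the comparison principle then yields $\|m^\varepsilon - m^{\varepsilon'}\|_\infty \le \|m_0^\varepsilon - m_0^{\varepsilon'}\|_\infty$, so $\{m^\varepsilon\}$ is Cauchy in $\BUC$ and its limit is the required viscosity solution.

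\textbf{Estimates.} The bound $\|m(t,\cdot)\|_\infty \le \|m_0\|_\infty$ is comparison with the constant supersolution. Monotonicity in $\rho$ follows by comparing $m(t,\rho)$ with its translate $m(t,\rho+h)$, whose initial datum dominates $m_0$ by nondecreasingness and whose boundary value is nonnegative. The Lipschitz bound in $\rho$ uses a translation trick: $v(t,\rho) \defeq m(t,\rho+h) - L h$ with $L = \|(m_0)_\rho\|_\infty$ is a subsolution (subtracting $Lh > 0$ from any solution adds the nonpositive term $-Lh\,(m_\rho)_+^\alpha$ to the equation), it satisfies $v(0,\rho) \le m_0(\rho)$ by Lipschitzness of $m_0$, and at $\rho=0$ obeys $v(t,0) = m(t,h) - Lh \le m_0(h) - Lh \le 0$ (using $m \le m_0$ and $m_0(0)=0$). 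Comparison then delivers $m(t,\rho+h) \le m(t,\rho) + Lh$. The identification $\lim_{\rho\to\infty} m(t,\rho) = \|m_0\|_\infty$ reduces, via monotonicity in $\rho$, to $\|m(t,\cdot)\|_\infty \ge \|m_0\|_\infty$: for any $\varepsilon>0$ I would compare $m$ with a subsolution of the form $(\widetilde m_0(\rho) - \widetilde K t)_+$ where $\widetilde m_0 \le m_0$ is a Lipschitz truncation reaching $\|m_0\|_\infty - \varepsilon$ over a very wide transition, making its Lipschitz constant (and hence $\widetilde K$) arbitrarily small and yielding the desired lower limit. Finally, the Lipschitz bound in $t$ is immediate from the equation, $|m_t| = (m_\rho)_+^\alpha\, m \le \|(m_0)_\rho\|_\infty^\alpha\, \|m_0\|_\infty$.

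\textbf{Main obstacle.} The nonstandard feature is that the Hamiltonian $H(m,p) = (p)_+^\alpha\, m$ depends on the unknown $m$ itself rather than only on its gradient, and is not coercive. The essential structural input is that $H$ is nondecreasing in $m$ on the a priori range $[0, \|m_0\|_\infty]$, which is precisely the monotonicity hypothesis required for the Crandall--Lions comparison and Perron arguments to operate; this was verified in detail in \cite{Carrillo+GV+Vazquez2019} and the argument transfers verbatim since the value of the exponent $\alpha$ plays no structural role in the well-posedness proof.
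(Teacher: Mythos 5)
Your proposal is correct in substance but follows a genuinely different route from the paper. The paper does not actually reprove this theorem in Section 4: it invokes the Crandall--Lions theory developed in \cite{Carrillo+GV+Vazquez2019} for $\alpha<1$, asserting that it applies to $\alpha\ge 1$ without modification, and its only self-contained existence construction appears later, in \Cref{thm:alpha > 1 existence of m}: for Lipschitz non-decreasing $m_0$ the monotone explicit--implicit scheme \eqref{eq:method} obeys a discrete maximum principle, preserves spatial monotonicity and the mass at infinity (\Cref{lem:properties of monotone methods}), and has uniformly bounded numerical derivatives (\Cref{lem:boundedness of numerical derivative}), so Ascoli--Arzel\'a yields a viscosity solution satisfying \eqref{eq:estimates on solution}, with uniqueness again from \Cref{thm:comparison principle m}. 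You instead build the solution by Perron's method with explicit barriers ($\|m_0\|_\infty$ and the stationary supersolution $m_0(\rho)$ from above, $(m_0(\rho)-Kt)_+$ from below), obtain the $L^\infty$-contraction in the initial data from the sign structure of the Hamiltonian (adding or subtracting a nonnegative constant preserves super/subsolutions since the extra term $\pm(p_2)_+^\alpha c$ has the favourable sign), and derive monotonicity and the gradient bound by comparison with spatial translates, recovering the limit at infinity through flat Lipschitz subsolutions of arbitrarily small slope; these comparison arguments are correct and give \eqref{eq:estimates on solution} directly at the PDE level, whereas the paper obtains them as limits of discrete estimates. What each approach buys: yours avoids any discretization and produces the moduli of continuity by pure comparison; the paper's produces a constructive approximation that doubles as a numerical method with a convergence rate (\Cref{thm:convergence M to m}). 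The one piece of technical debt in your sketch is the Perron step itself: \Cref{thm:comparison principle m} is stated for uniformly continuous sub- and supersolutions, while the standard Ishii argument needs either comparison for semicontinuous envelopes or a separate proof that the supremum of the admissible family is continuous in the interior --- your barriers only control behaviour at $t=0$ and $\rho=0$. This can be repaired by standard means (or bypassed by constructing the solution as a limit of solutions by characteristics or of the scheme, which is in effect what the paper does), but it should be stated explicitly rather than attributed wholesale to \cite{Carrillo+GV+Vazquez2019}.
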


\subsection{The vortex}
\label{sec:vortex}
We next show that the mass associated to vortices \eqref{eq:vortex} (see \Cref{fig:mass vortex}) are viscosity solutions if and only if $\alpha \ge 1$. In \cite{Carrillo+GV+Vazquez2019} we showed that the mass associated to vortex solutions are not of viscosity type for $\alpha < 1$. Intuitively, this is another instance of degenerate diffusion versus fast diffusion-like behaviour, i.e. compactly supported versus fat tails.
\begin{theorem}
	\label{thm:vortex is viscosity sol}
	The mass associated to the  vortex solution \eqref{eq:vortex}
	\begin{equation}
	\label{eq:mass vortex}
	m(t,\rho) = \min  \{   (c_0^{-\alpha} + \alpha t )^{-\frac 1 \alpha} \rho, c_0 L  \}.
	\end{equation} is a viscosity solution for $\alpha \ge 1$.
\end{theorem}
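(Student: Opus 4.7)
The plan is to split the verification into three regions: the interior where $\rho < \rho_\ast(t) \defeq c_0 L / A(t)$ with $A(t) \defeq (c_0^{-\alpha}+\alpha t)^{-1/\alpha}$ (so $m = A(t)\rho$), the exterior where $\rho > \rho_\ast(t)$ (so $m = c_0 L$), and the free-boundary curve $\Gamma = \{\rho = \rho_\ast(t)\}$. In the interior and exterior the function $m$ is $\mathcal C^1$, so the viscosity condition reduces to checking the PDE pointwise. A direct computation using $A'(t) = -A(t)^{\alpha+1}$ gives $m_t + (m_\rho)_+^\alpha m = \rho(A' + A^{\alpha+1}) = 0$ in the interior, and $m_t = m_\rho = 0$ in the exterior; also the boundary data $m(t,0) = 0$ and $m(0,\rho) = \min(c_0\rho, c_0 L) = m_0(\rho)$ hold by inspection.

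The only nontrivial work is at a point $(t_0, \rho_0) \in \Gamma$, where $m$ has a concave kink. For the supersolution condition I plan to show $D^- m(t_0,\rho_0) = \emptyset$, so the inequality is vacuous: if $(p_1,p_2) \in D^- m$, testing along the vertical segment $\rho = \rho_0$ yields $p_1 \ge 0$ on the exterior side (since $m$ is constant there) while $p_1 \le A'(t_0)\rho_0 = -c_0 L A(t_0)^\alpha < 0$ on the interior side, a contradiction. For the subsolution condition, since $m(t,\rho) = \min(A(t)\rho, c_0 L)$ is a minimum of two $\mathcal C^1$ functions agreeing at $(t_0,\rho_0)$, the superdifferential contains the convex hull of the two gradients,
\begin{equation*}
    D^+ m(t_0,\rho_0) \supseteq \operatorname{co}\bigl\{(-c_0 L A(t_0)^\alpha, A(t_0)),\,(0,0)\bigr\},
\end{equation*}
and in fact equals it. Any $(p_1,p_2) = \lambda(-c_0 L A(t_0)^\alpha, A(t_0))$ with $\lambda \in [0,1]$ gives
\begin{equation*}
    p_1 + (p_2)_+^\alpha\, m(t_0,\rho_0) = c_0 L\, A(t_0)^\alpha\,(\lambda^\alpha - \lambda),
\end{equation*}
which is $\le 0$ precisely when $\lambda^\alpha \le \lambda$ for $\lambda \in [0,1]$, i.e., exactly when $\alpha \ge 1$. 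This both verifies the subsolution condition and shows why the threshold $\alpha \ge 1$ arises.

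The main obstacle is the rigorous description of $D^\pm m$ at the kink in two variables; I would handle this by writing $m$ locally as a minimum of two smooth functions, using the standard fact that for such a function $D^+$ equals the convex hull of the two classical gradients and $D^-$ is empty at points where the two gradients are distinct (which is the generic transversal case here, since $A(t_0) > 0$). Once this is in place, the whole argument is a short one-line inequality $\lambda^\alpha \le \lambda$, making transparent the sharp role of convex superlinear mobility in stabilizing the shock.
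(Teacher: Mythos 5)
Your proof is correct, and at the crucial point it takes a different route from the paper. Both arguments treat the smooth regions classically and handle the free-boundary point $(t_0,\rho_0)$ separately, and both end up verifying the subsolution inequality on exactly the same segment of superdifferentials $\{\lambda(-c_0 L A(t_0)^\alpha, A(t_0)):\lambda\in[0,1]\}$; the difference is how that segment is identified. You invoke the standard convex-analysis fact that for a minimum of two $\mathcal C^1$ functions agreeing at a point with distinct gradients one has $D^+m=\operatorname{co}\{\nabla f,\nabla g\}$ and $D^-m=\emptyset$ (your separate one-dimensional check of $D^-m=\emptyset$ along the $t$-direction is also fine, and mirrors the paper's check along the $\rho$-direction), after which the inequality collapses to $\lambda^\alpha\le\lambda$ on $[0,1]$, which isolates the role of $\alpha\ge1$ very cleanly. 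The paper instead takes an arbitrary $(p_1,p_2)\in D^+m$ with $p_2\in(0,A(t_0)]$, parametrises the level set $\{\varphi=c_0L\}$ of a touching test function, and uses its tangency with the shock curve $S(t)=c_0L(c_0^{-\alpha}+\alpha t)^{1/\alpha}$ to derive the relation $p_1=-\dot S(t_0)\,p_2$, i.e. the same linear constraint, before concluding with the same monotonicity-of-powers estimate. Your version is shorter and more standard, and it makes the threshold $\alpha\ge1$ transparent; the paper's geometric argument buys something else, namely that it does not require $m$ to be an explicit minimum of two smooth functions but only knowledge of the Rankine--Hugoniot shock speed, which is why it is recycled verbatim for the non-explicit shocks in Section~4.3 (where the solution on the left of the shock is only given by characteristics). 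So if you later want the analogue of your lemma for those solutions, you would either need to upgrade your convex-hull characterisation to that less explicit setting or fall back on the level-set tangency argument.
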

\begin{figure}
	\centering
	\includegraphics[width = .45 \textwidth]{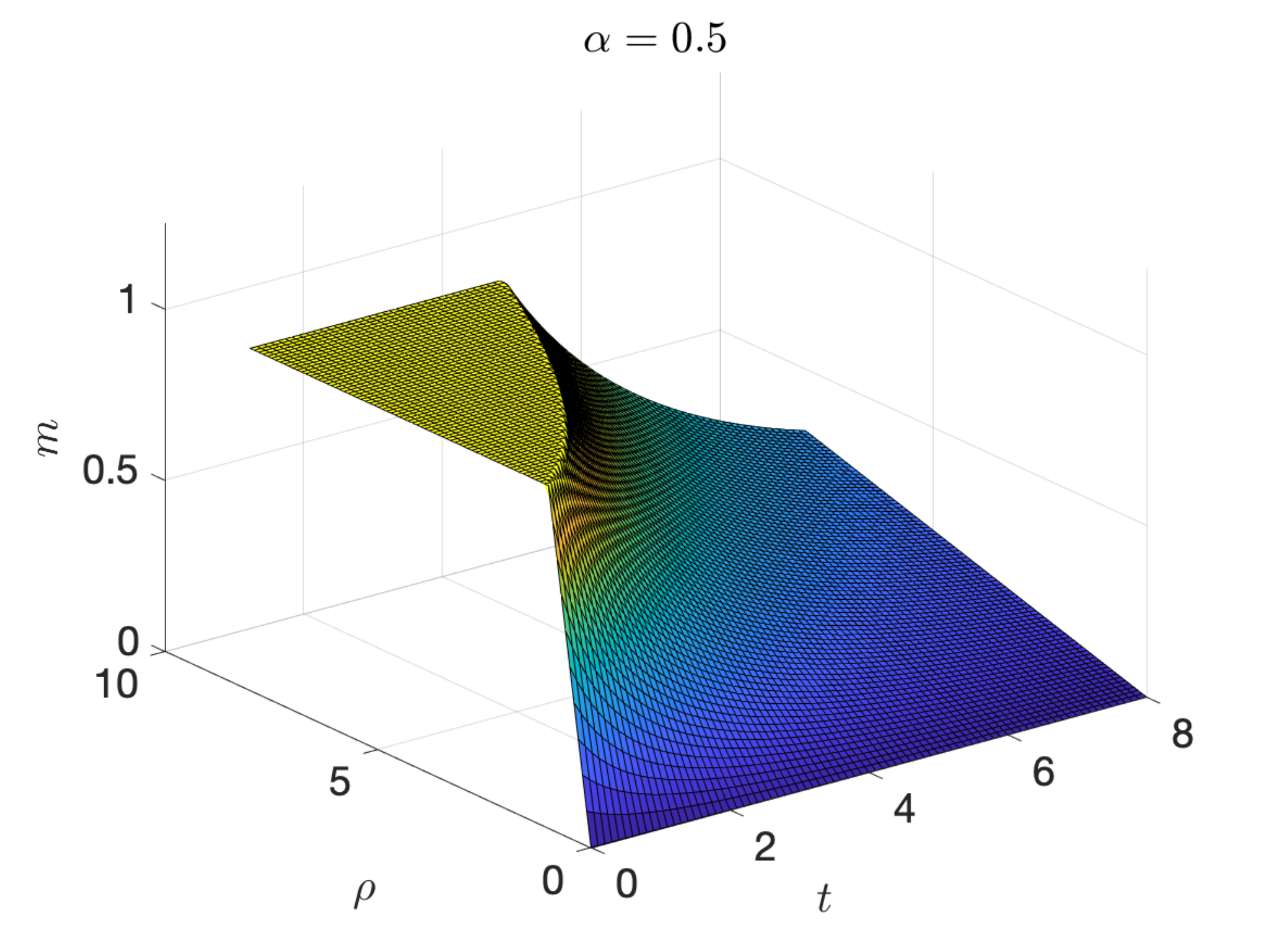}
	\includegraphics[width = .45 \textwidth]{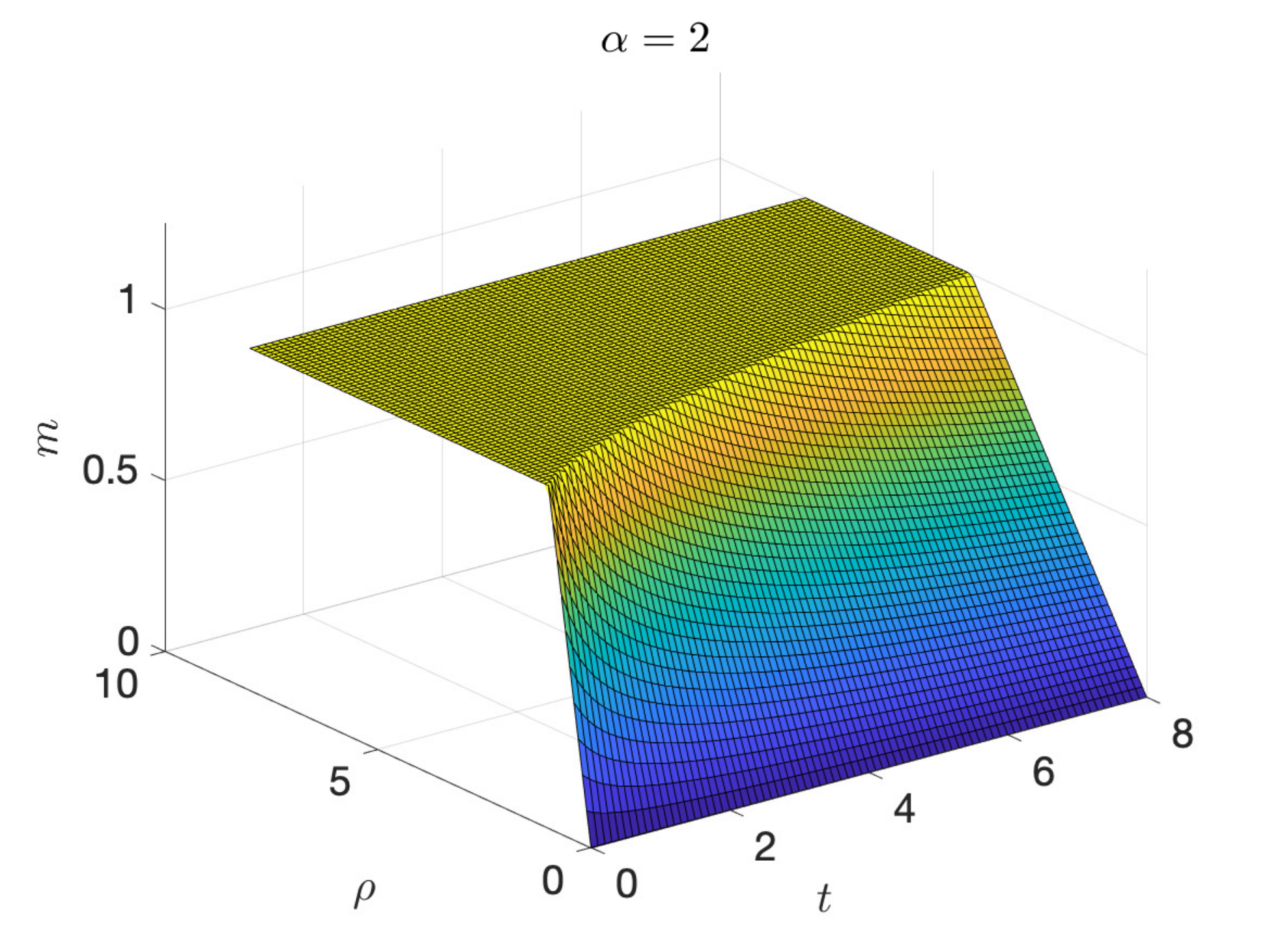}
	\caption{Representation of \eqref{eq:mass vortex} where $c_0 = L = 1$}
	\label{fig:mass vortex}
\end{figure}
\begin{proof}
	Let us fix a $t_0, \rho_0 > 0$. If we are not on the edge, i.e. $ (c_0^{-\alpha} + \alpha t_0 )^{-\frac 1 \alpha} \rho_0 \ne c_0 L$, then $m$ is a classical solution and it satisfies the viscosity formulation.
	
	Let us, therefore, look at a point such that
	\begin{equation}
	\label{eq:level set}
	(c_0^{-\alpha} + \alpha t_0 )^{-\frac 1 \alpha} \rho_0 = c_0 L.
	\end{equation}
	 It is clear that no smooth function $\varphi \le m$ can be tangent to $m$ at $(t_0, \rho_0)$, since the derivative of $m_\rho(t_0, \rho_0^-) > m_\rho(t_0, \rho_0^+)$.
	 Therefore, the definition of viscosity {supersolution} is immediately satisfied, since $D^- m (t_0,\rho_0) = \emptyset$.
	
	 To check that $m$ is a viscosity {subsolution}, let us take $(p_1,p_2) \in D^+ m(t_0, \rho_0)$.
	 Due to the explicit formula
	 $p_2 \in [0, (c_0^{-\alpha} + \alpha t)^{-\frac 1 \alpha}]$, and
	  since it is non-increasing in $t$, $p_1 \le 0$.
	
	 If $p_2= 0$ then, since $p_1 \le 0$, \eqref{eq:viscosity subsolution} is satisfied.
	Assume $p_2 \ne 0$. There exists $\varphi \in \mathcal C^1$  such that $\varphi \ge m$, $\varphi (t_0, \rho_0) = m (t_0, \rho_0) $ and $\varphi_t (t_0, \rho_0) = p_1$ and
	\begin{equation}
	\label{eq:estimate p_2}
		 \varphi_\rho (t_0, \rho_0) = p_2 \le (c_0^{-\alpha} + \alpha t)^{-\frac 1 \alpha}.
	\end{equation}
	
	Let us take a look at the level sets. Since $\varphi \ge m$, we have that $\{\varphi < c_0 L\} \subset \{ m < c_0L \}$. Therefore $\partial \{\varphi < c_0 L\}$, is contained in the region $\overline {\{ m < c_0 L \}} $. Since $\varphi$ and $m$ coincide at $(t_0, \rho_0)$, then $\partial \{\varphi < c_0 L\}$ and $\partial \{ m < c_0 L \}$ are tangent at $(t_0, \rho_0)$ (see \Cref{fig:level sets}).
	\begin{figure}[H]
		\centering
		\includegraphics[scale=1.2]{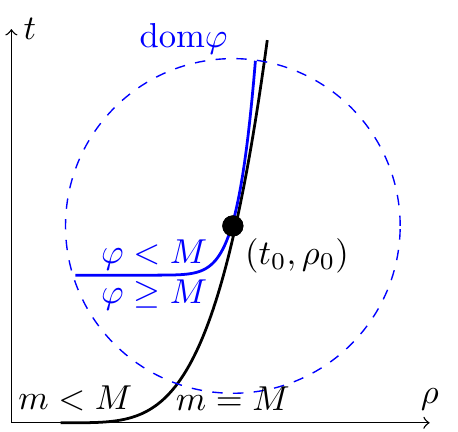}
		\caption{Relation between level sets. In the figure, $M = c_0 L$.}
		\label{fig:level sets}
	\end{figure}
	Since $p_2 > 0$, $\partial\{ \varphi < c_0 L \}$ can be parametrised by a curve $(t, \rho^* (t))$ defined for $t \in (t_0-\varepsilon, t_0 +\varepsilon)$ such that $\varphi(t, \rho^*(t)) = c_0 L$ and $\rho^* (t_0) = \rho_0$. Hence, taking a derivative and evaluating at $(t_0,\rho_0)$
	\begin{equation*}
	\varphi_t (t_0, \rho_0) + \frac{\diff \rho^*}{\diff t} (t_0) \varphi_\rho  (t_0, \rho_0) = 0.
	\end{equation*}
	On the other hand, $\partial \{ m < c_0 L \}$ can be parametrised as $(t, S(t))$ where
	\begin{equation*}
		S(t) = c_0L (c_0^{-\alpha} + \alpha t)^{ \frac 1 \alpha}
	\end{equation*}
	and hence
	\begin{equation}
	\label{eq:new 2.4}
		\frac{\diff S}{\diff t} = c_0L (c_0^{-\alpha} + \alpha t)^{-1 + \frac 1 \alpha}.
	\end{equation}
		Notice that the sign of $\alpha - 1$ decides the convexity or concavity of the matching curve. Since the level sets are tangent at $(t_0, \rho_0)$, the derivatives coincide and we have
	\begin{equation*}
	\frac{\diff \rho^*}{\diff t} (t_0) = \frac{\diff S}{\diff t} (t_0)  = c_0 L (c_0^{-\alpha} + \alpha t_0)^{-1 + \frac 1 \alpha}.
	\end{equation*}
	Hence, we have that
	\begin{equation*}
			p_1 =	\varphi_t (t_0, \rho_0) =  -  \frac{\diff \rho^*}{\diff t} (t_0) \varphi_\rho  (t_0, \rho_0) = -c_0 L (c_0^{-\alpha} + \alpha t)^{-\frac 1 \alpha} p_2.
	\end{equation*}
	Applying \eqref{eq:estimate p_2} and that $\alpha \ge 1$ we have that
	\begin{align*}
		p_1 + m(t_0, \rho_0) p_2^\alpha & =  -c_0 L (c_0^{-\alpha} + \alpha t)^{-\frac 1 \alpha} p_2 + c_0 L p_2^\alpha \\
		&= c_0 L p_2 \left(  p_2^{\alpha - 1} -  (c_0^{-\alpha} + \alpha t)^{-\frac 1 \alpha}  \right)\\
		&\le 0,
	\end{align*}
	which is precisely \eqref{eq:viscosity subsolution}.
	This completes the proof.
\end{proof}

\begin{remark}	
	
	\label{rem:one Delta data}
	The notion of viscosity solution can be extended, by approximation, to cover non-negative finite measures as initial data $u_0$.
	Notice that these vortex solutions concentrate as $t \searrow  -c_0^{-\alpha} / \alpha$ to the Heaviside function $m(t, \rho) \to c_0 L  H_0 (\rho)$. As we point out above, this proves that if
	\begin{equation}
		u_0 = M \delta_0 \quad (\text{i.e. } m_0 = M H_0 ),
	\end{equation}
	then the solution is a cut-off of the Friendly Giant \eqref{eq:Friendly Giant}
	\begin{equation*}
		u(t,\rho) = \begin{dcases}
		(\alpha t)^{-\frac 1 \alpha}  & \rho < M (\alpha t)^{\frac 1 \alpha} \\
		0  & \rho > M (\alpha t)^{\frac 1 \alpha}
		\end{dcases},
	\end{equation*}
	and hence
	\begin{equation}
		m(t,\rho) = \begin{dcases}
		(\alpha t)^{-\frac 1 \alpha} \rho  & \rho \le  M (\alpha t)^{\frac 1 \alpha} \\
		M  & \rho > M (\alpha t)^{\frac 1 \alpha}
		\end{dcases}.
	\end{equation}
	For every $\varepsilon>0$ this is a viscosity solution of \eqref{eq:mass equation} in $\mathcal C ((\varepsilon,+\infty) \times \mathbb R_+)$. Notice that $m$ is of self-similar form
	\begin{equation}
		{m\left( t ,  \rho \right) } = {M G\left (\frac{\rho}{M(\alpha t)^{\frac 1 \alpha}} \right)}  ,
		\qquad \text{where }
		G( y   ) = \begin{dcases}
		y  & y\le 1 \\
		1 & y > 1\\
		\end{dcases}.
	\end{equation}
	
	In fact, by translation invariance, it is possible to show that if $u_0 = M \delta_{c_0}$, i.e.
	\begin{equation*}
		m_0 (\rho) = M H _{c_0} (\rho),
	\end{equation*}
	then
	\begin{equation}
		m(t, \rho) = \begin{dcases}
		0 & \rho < c_0 \\
		(\alpha t)^{-\frac 1 \alpha} (\rho - c_0) & \rho \in [c_0, c_0 + M (\alpha t)^{\frac 1 \alpha}) \\
		M & \rho > c_0 + M (\alpha t)^{\frac 1 \alpha}
		\end{dcases}.
	\end{equation}
	is a viscosity solution of the mass equation defined for $t > 0$. With self-similar form
	\begin{equation}
	{m\left( t ,  \rho \right) } = {M G\left (\frac{\rho - c_0 }{M(\alpha t)^{\frac 1 \alpha}} \right)}  ,
	\qquad \text{where }
	G( y   ) = \begin{dcases}
	0 & y \le 0 \\
	y  & y\in (0,1) \\
	1 & y > 1\\
	\end{dcases}.
	\end{equation}
\end{remark}

\subsection{Monotone non-decreasing data with final cut-off}
\label{sec:monotono non-decreasing with cut-off}

As in \cite{Carrillo+GV+Vazquez2019}, the theory of existence and uniqueness is written in terms of $m$, but we take advantage of the intuition from the conservation law \eqref{eq:main equation} for $u$, to construct explicit solutions  through characteristics. Notice that taking a derivative in \eqref{eq:mass equation} we can write
\begin{equation*}
	u_t + (u^\alpha m)_\rho = 0.
\end{equation*}
Afterwards, we check that the constructed solution fall into our viscosity theory for $m$.

Since it is suggested by \eqref{eq:characteristics} that characteristics do not cross if $u_0$ is non-decreasing, let us look for solutions with initial data
\begin{equation}
\label{eq:jump sol u_0}
u_0 (\rho) = \begin{dcases}
\widetilde u_0 (\rho) & \rho < S_0 \\
0 & \rho > S_0.
\end{dcases}, \qquad \text{ where } \widetilde u_0 \text { in continuous and non-decreasing in } [0,S_0].
\end{equation}
When $m_0$ and $u_0$ are non-decreasing, it clear that characteristics with foot on $\rho_0 \in [0,S_0]$ do not cross.
If $\widetilde u_0 \not \equiv 0$, then $u_0(S_0^-) = \widetilde u_0 (S_0) > 0$ and there is a shock starting at $S_0$ which will propagate following the Rankine-Hugoniot condition \eqref{eq:RH}.

We construct the viscosity solution as follows. The characteristic of foot $\rho_0 = S_0$ is precisely
\begin{equation*}
\overline S (t) = S_0 + \alpha M \widetilde u_0 (S_0) ^{\alpha - 1} t.
\end{equation*}
For $\rho < \overline S(t)$ we can go back through the characteristics with $P_t$ defined above. Let us define
\begin{equation}
\widetilde u(t,\rho) = \begin{dcases}
0 &  \text{if } \widetilde u_0 (P_t^{-1} (\rho)) = 0 \\
(\widetilde u_0(P_t^{-1} (\rho))^{-\alpha} + \alpha t) ^{-\frac 1 \alpha} &   \text{if } \widetilde u_0 (P_t^{-1} (\rho)) > 0 \text{ and } \rho < \overline S(t) \\
0 & \text{if } \rho > \overline S (t)
\end{dcases},
\end{equation}
The shock is given by
\begin{equation}
\label{eq:jump sol RH}
\begin{dcases}
\frac{\diff S}{\diff t} = M \widetilde  u(t, S(t))^{\alpha - 1} \\
S(0) = S_0
\end{dcases}.
\end{equation}
Finally we define
\begin{equation}
\label{eq:jump sol explicit}
u(t,\rho) = \begin{dcases}
\widetilde u (t, \rho)  & \rho < S(t)  \\
0 & \rho > S(t).
\end{dcases}
\end{equation}
Solving explicitly is not possible. However, we can prove that
\begin{proposition}
	Let $\alpha \ge 1$, \eqref{eq:jump sol u_0}. Then, the mass of \eqref{eq:jump sol explicit} is a viscosity solution of \eqref{eq:mass equation} and $S(t) \le \overline S(t)$.
\end{proposition}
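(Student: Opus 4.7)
The plan is to split the verification of the viscosity property into the interiors of the two regions separated by the shock curve, where the construction is classical, and the shock curve itself, where I will test the definition pointwise. The bound $S\le \overline S$ will follow from a direct comparison of the Rankine--Hugoniot speed with the characteristic speed, using the monotonicity of $u_0$.

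\textbf{Viscosity property.} Away from the shock, the mass $m=\int_0^\rho u\diff\sigma$ is classical: on $\{\rho<S(t)\}$ it is a $C^1$ solution produced by non-crossing characteristics in the spirit of \Cref{thm:solution by characteristics}, and on $\{\rho>S(t)\}$ it is the constant $M$. Both are therefore viscosity solutions, and the initial and boundary data match by construction. The only issue is on the shock. Fix $(t_0,S(t_0))$; mass conservation gives $m(t_0,S(t_0))=M$, and \eqref{eq:RH} reduces to $S'(t_0)=M(\widetilde u^-)^{\alpha-1}$ with $\widetilde u^-:=\widetilde u(t_0,S(t_0)^-)>0$, since $\widetilde u^+=0$. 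Since $m\le M$ in a neighbourhood, any $\varphi\in C^1$ touching $m$ from below at $(t_0,S(t_0))$ has a local maximum there, hence zero gradient; so $D^-m(t_0,S(t_0))\subseteq\{(0,0)\}$ and the supersolution inequality is the trivial $0\ge 0$. For the subsolution inequality, I will take $\varphi\in C^1$ with $\varphi\ge m$ and equality at $(t_0,S(t_0))$, and set $(p_1,p_2)=\nabla\varphi(t_0,S(t_0))$. Restricting $\varphi\ge M$ to the shock curve $\rho=S(t)$ (where $m\equiv M$) and differentiating at $t_0$ yields $p_1+p_2 S'(t_0)=0$. Comparing $\varphi$ to the constant $M$ on the slice $\{t=t_0,\,\rho>S(t_0)\}$ forces $p_2\ge 0$, and comparing $\varphi$ to the smooth characteristic branch $\widetilde m$ on $\{t=t_0,\,\rho<S(t_0)\}$, whose $\rho$-derivative at the shock is $\widetilde u^-$, forces $p_2\le\widetilde u^-$. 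Substituting into the subsolution inequality then gives
\[
p_1+(p_2)_+^{\alpha}m(t_0,S(t_0))\;=\;M p_2\bigl(p_2^{\alpha-1}-(\widetilde u^-)^{\alpha-1}\bigr)\;\le\;0,
\]
which holds precisely because $\alpha\ge 1$ makes $x\mapsto x^{\alpha-1}$ non-decreasing on $[0,\infty)$ and $p_2\le\widetilde u^-$. This is exactly the step where the convex superlinear hypothesis is used, in contrast with the failure for $\alpha<1$ observed in \cite{Carrillo+GV+Vazquez2019}.

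\textbf{The bound $S\le\overline S$.} For the second assertion, I will argue directly from the characteristic representation. Since $u_0\equiv 0$ on $(S_0,\infty)$, the corresponding characteristics are stationary, so the characteristic absorbed by the shock at time $t$ has foot $\rho_0^\ast(t)\in[0,S_0]$. The decay formula \eqref{eq:characteristics solutions} and the monotonicity of $\widetilde u_0$ then give
\[
\widetilde u(t,S(t))=\bigl(u_0(\rho_0^\ast)^{-\alpha}+\alpha t\bigr)^{-1/\alpha}\le u_0(\rho_0^\ast)\le u_0(S_0),
\]
whence
\[
S'(t)=M\widetilde u(t,S(t))^{\alpha-1}\le M u_0(S_0)^{\alpha-1}\le \alpha M u_0(S_0)^{\alpha-1}=\overline S\,'(t),
\]
and integrating from $S(0)=\overline S(0)=S_0$ delivers $S(t)\le\overline S(t)$.

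\textbf{Main obstacle.} The hardest part will be the structural analysis of $D^+m$ on the shock, namely deriving the two simultaneous constraints $p_1+p_2 S'(t_0)=0$ and $0\le p_2\le\widetilde u^-$ from the single condition $\varphi\ge m$ by examining the one-sided touching from each of the two regions separated by the curve $\rho=S(t)$. Once these are in hand, the proof collapses to a short monotonicity argument driven by $\alpha\ge 1$, and the bound $S\le\overline S$ is essentially immediate.
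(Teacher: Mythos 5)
Your proof is correct and follows essentially the same route as the paper: the paper obtains $S\le\overline S$ by exactly your comparison of the Rankine--Hugoniot speed $m\,\widetilde u^{\,\alpha-1}\le M\widetilde u_0(S_0)^{\alpha-1}\le\alpha M\widetilde u_0(S_0)^{\alpha-1}$ with the last characteristic, and verifies the viscosity property by repeating the vortex argument of \Cref{thm:vortex is viscosity sol} with the shock law \eqref{eq:jump sol RH} in place of \eqref{eq:new 2.4}, which produces the same constraints $0\le p_2\le\widetilde u^-$, $p_1=-S'(t_0)p_2$ and the same use of $\alpha\ge1$. Your only deviation is cosmetic: you derive $p_1+p_2S'(t_0)=0$ by minimizing $\varphi-M$ along the shock curve (which also covers $p_2=0$ directly), whereas the paper gets it from the tangency of the level sets $\{\varphi<M\}$ and $\{m<M\}$ when $p_2>0$ and handles $p_2=0$ separately.
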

\begin{proof}
	Since $\alpha \ge 1$, we have that
	\begin{align*}
	\frac{\diff S}{\diff t} &= m(t, S(t)) u(t, S(t)^-)^{\alpha - 1} = m(t, S(t)) \widetilde u(t, S(t))^{\alpha - 1}  \le \alpha M \widetilde u_0 (S_0) ^{\alpha - 1} = \frac{\diff \overline S}{\diff t} .
	\end{align*}
	where $M = \sup m_0$.
	Also $S(0) = \overline S (0)$. Hence the shock is slower than the last characteristic (i.e. $S(t) \le \overline S (t)$) and this implies that there are no outgoing characteristics so the Lax-Oleinik condition is satisfied.
	
	Now, in order to check that it is a viscosity solution, we can repeat the proof of \Cref{thm:vortex is viscosity sol}, replacing \eqref{eq:new 2.4} by \eqref{eq:jump sol RH}.
\end{proof}

\subsection{Two Dirac deltas}
\label{sec:two deltas}

Let us now consider that
\begin{equation}
u_0 = m_1 \delta_{\rho_1} + m_2 \delta_{\rho_2} .
\end{equation}
where $0 \le  \rho_1 < \rho_2$. Then the initial mass $m_0$ is discontinuous, and this creates some technical difficulties.
We will show the viscosity solution for \eqref{eq:mass equation} is the primitive of
\begin{equation}
\label{eq:u two deltas}
u (t, \rho) = \begin{cases}
0 & \rho < \rho_1 \\
\left ( \alpha t \right ) ^{-\frac 1 \alpha} & \rho \in \left[ \rho_1 , S_1 (t) \right]    \\
0 & \rho \in [S_1 (t) , \rho_2 ] \\
\left( \left(  \frac{ \rho - \rho_2}{\alpha m_1 t} \right)  ^{- \frac  \alpha {\alpha - 1}} + \alpha t  \right)^{-\frac 1 \alpha}  & \rho \in [\rho_2, S_2 (t) ] \\
0 & \rho > S_2(t)
\end{cases}
\end{equation}
where
\begin{equation}
\label{eq:u two deltas S_1}
S_1 (t) = \rho_1 + m_1 \left( \alpha t \right) ^{\frac {\alpha - 1} \alpha}  ,
\end{equation}
and
\begin{equation}
\label{eq:two deltas support}
S_2(t) = \rho_2 + \alpha m_1 \mathcal K^{-1} \left(  \frac{m_2}{ \alpha m_1}\right)   t^{\frac 1 \alpha} ,
\end{equation}
with
\begin{equation}
\label{eq:two deltas support scaling}
\mathcal K(\tau) = \int_0^\tau \left( { s }    ^{- \frac  \alpha {\alpha - 1}} + \alpha   \right)^{-\frac 1 \alpha}   \diff s.
\end{equation}
We have the following estimates for the function $\mathcal K^{-1}$: for $\tau \le s_0$ there exists $c(s_0), C(s_0) > 0$ such that
\begin{equation}
\label{eq:two deltas scaling F inv}
	c(s_0)	\tau^{ \frac {\alpha}{\alpha - 1}} \le \mathcal K^{-1} (\tau) \le C(s_0) \tau^{ \frac {\alpha}{\alpha - 1}}.
\end{equation}

This solution is defined for all $t$ such that $S_1(t) \le \rho_2$, i.e. $t \le ( (\rho_2 - \rho_1) / m_1 )^{\frac \alpha {\alpha - 1}} / \alpha.$ For large $t$, $S_1$ would need to be computed from another further Rankine-Hugoniot condition. We will only use the value for $t$ small, so this computation is enough for our purposes.

\begin{figure}[H]
	\centering
	\includegraphics[width = .45\textwidth]{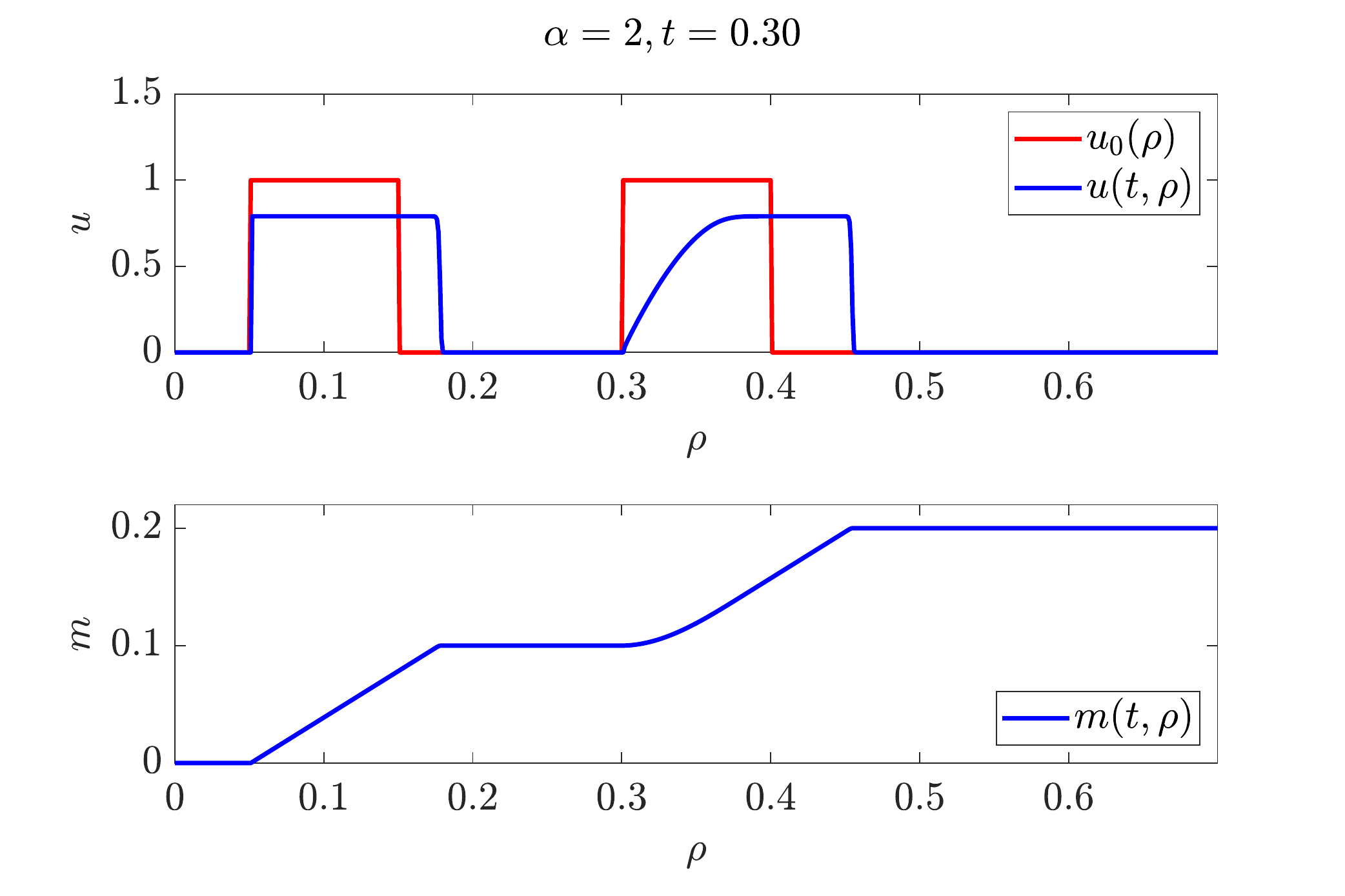}
	\includegraphics[width = .45\textwidth]{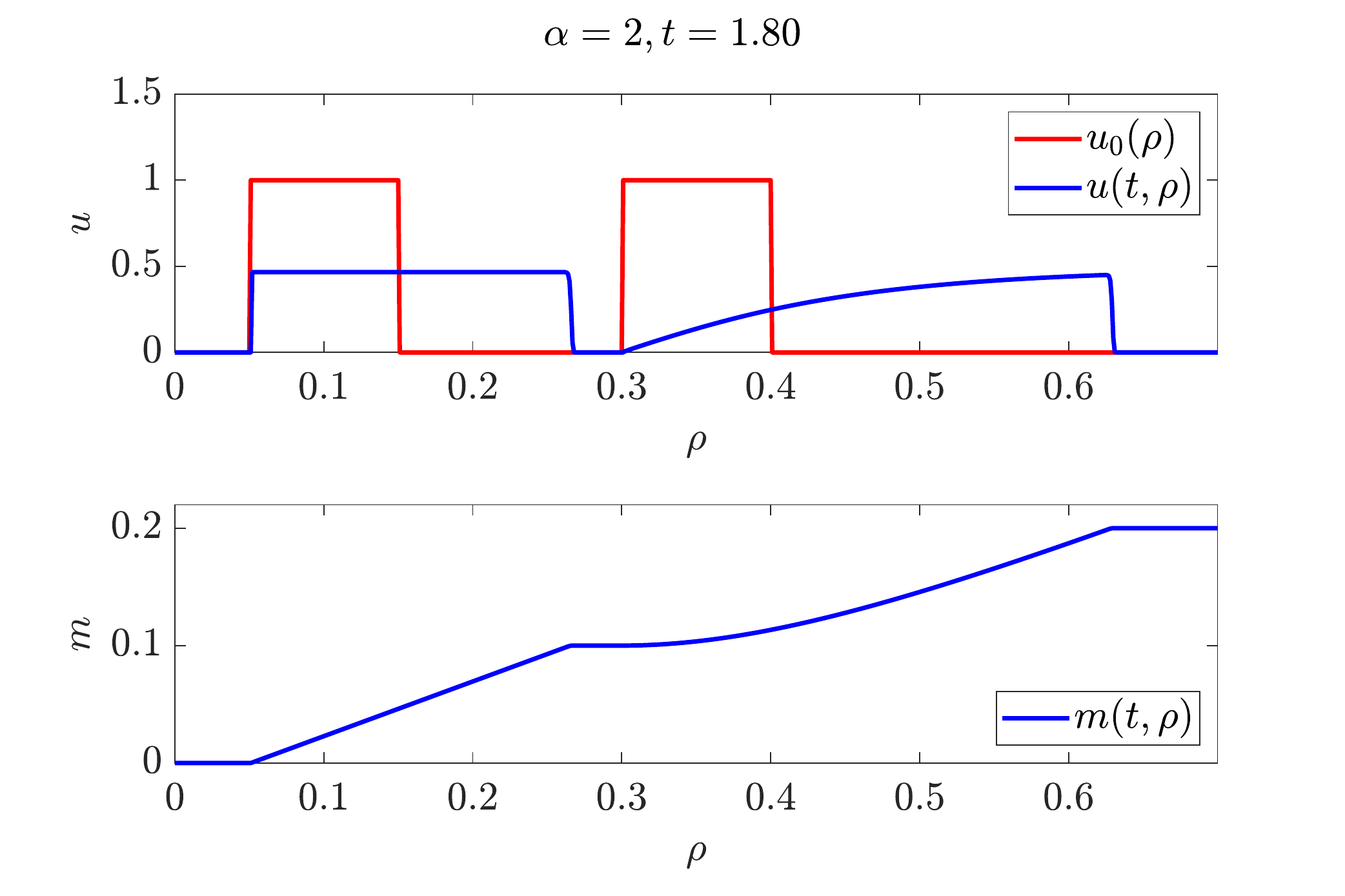}
	\caption{Solutions with $u_0$ given by two characteristics. Computed with the numerical scheme for $m$ in \Cref{sec:numerics} reproducing the exact solution up to approximation error. The function $u = m_\rho$ is recovered by numerical differentiation.}
\end{figure}

\paragraph{Approximation by viscosity solutions}

We will prove there is an explicit solution defined for some $T > 0$ which is of viscosity type for $t > 0$. We will approximate the initial data by
\begin{equation}
	u_0^{\varepsilon,\delta} = \begin{dcases}
	\frac {m_1}  \varepsilon & \rho \in [\rho_1 , \rho_1 + \varepsilon]  \\
	\frac {m_2} {\varepsilon} \frac{(\rho - (\rho_2 - \delta))}{\delta} & \rho \in [\rho_2 - \delta , \rho_2]  \\
	\frac{m_2} {\varepsilon } & \rho \in [\rho_2, \rho_2 + \varepsilon ] \\
	0 & \text{otherwise}
	\end{dcases},
\end{equation}
for $\ee$ and $\delta$ small enough.
The $\ee$-regularisation is used to approximate the Dirac deltas at the level of $u$. The $\delta$-regularisation is used to resolve the appearance of a rarefaction wave at $\rho_2$ due to a gap in the characteristics. Since viscosity solutions are stable by passage to the limit, we only need to show that our approximating solution are viscosity solutions.

The first part of the solutions does not notice the $\delta$-regularisation.
We take $\varepsilon$ small enough so that $\rho_1 + \varepsilon < \rho_2 - 2\varepsilon$.
For $\rho < \rho_2$ we reconstruct a vortex type solution following \Cref{sec:vortex} with an initial gap
\begin{equation}
	u_\varepsilon (t, \rho) = \begin{dcases}
	0 & \rho < \rho_1 \\
	\left (\left( \frac{m_1}{\varepsilon }\right)^{-\alpha} + \alpha t \right ) ^{-\frac 1 \alpha} & \rho \in \left[ \rho_1 , S_1^\varepsilon (t) \right]
	\end{dcases}
\end{equation}
where the first shock is given by
\begin{equation}
	S_1^\varepsilon (t) = \rho_1 + \varepsilon_1 + m_1 \left( \left  (\left( \frac{m_1}{\varepsilon }\right)^{-\alpha} + \alpha t \right) ^{\frac {\alpha - 1} \alpha} - \left( \frac{m_1}{\varepsilon }\right)^{1-\alpha}   \right) .
\end{equation}
Solutions in this form are defined for $t \in [0,T_\varepsilon)$ such that $S_1^\varepsilon(T_\varepsilon) = \rho_2 - \delta $. We leave to the reader to check that $T_\ee$ does not tend to zero with $\ee \to 0$.

For the second part, the characteristics with foot $\rho_0 \in [\rho_2 - \delta  , \rho_2]$ are given by
\begin{equation}
	\label{eq:two deltas approx characteristics 1}
	\rho = \rho_0 + \alpha \left(  m_1 + \frac{m_2}{2} \frac{(\rho_0 + \delta - \rho_2)^2}{\varepsilon \delta }  \right) \left( \frac{m_2}{\varepsilon \delta } (\rho_2 + \delta - \rho)\right)^{\alpha - 1} t
\end{equation}
On the other hand, if $\rho_0 \in [\rho_2 , \rho_2 + \varepsilon]$ we have
\begin{equation}
	\label{eq:two deltas approx characteristics 2}
	\rho = \rho_0 + \alpha \left(  m_1 + \frac{m_2}{2 \varepsilon} \delta + \frac{m_2}{\varepsilon} (  1 - (\rho_2 + \varepsilon - \rho_0) ) \right) \left( \frac{m_2}{\varepsilon}\right)^{\alpha - 1} t.
\end{equation}
Notice that in both cases $u$ is given by
\begin{equation*}
u(t, \rho) = (u_0 (\rho_0)^{-\alpha} + \alpha t  )^{-\frac 1 \alpha}.
\end{equation*}
By mass conservation we have a further shock starting from $\rho_2 + \varepsilon$ given by a Rankine-Hugoniot condition
\begin{equation*}
	\frac{dS_2^{\varepsilon, \delta} }{dt} (t) = (m_1 + m_2) u_{\varepsilon,\delta }(t, S_2^\varepsilon(t)^-)^{\alpha-1}.
\end{equation*}
The first part of solution is of viscosity type, by an argument analogous to \Cref{sec:vortex} and the second part have a monotone non-decreasing datum with final cut-off as in \Cref{sec:monotono non-decreasing with cut-off}. We are reduced now to pass to the limit as $\ee $ and $\delta$ tend to $0$.

\paragraph{Passage to the limit as $\delta \to 0$}
For $[0, \rho_2 - \delta]$ the solution did not depend on $\delta$, so there is no work needed. Applying a similar argument as in \cite{Carrillo+GV+Vazquez2019} we can pass to the limit in \eqref{eq:two deltas approx characteristics 1}.
The characteristics with foot in $[\rho_2, \rho_2 + \delta]$ collapse to a rarefaction fan at $\rho_2$ of the form
\begin{equation}
\label{eq:two delta characteristics collapse}
		\rho = \rho_2 + m_1 \eta_0^{\alpha - 1} t  , \qquad \eta_0 \in \left[ 0,  \frac{m_2}{2\varepsilon} \right ].
\end{equation}
By inverting $\eta_0$ in \eqref{eq:two delta characteristics collapse} we recover the solution
\begin{equation*}
	u_\varepsilon (t, \rho) = \left( \eta_0^{-\alpha} + \alpha t  \right)^{-\frac 1 \alpha} = \left( \left(  \frac{ \rho - \rho_2}{\alpha m_1 t} \right)  ^{- \frac \alpha {\alpha - 1}} + \alpha t  \right)^{-\frac 1 \alpha}.
\end{equation*}

The other characteristics are for foot $\rho_0 \in [\rho_2 , \rho_2 + \varepsilon ]$ and, by passing analogously to the limit in \eqref{eq:two deltas approx characteristics 2}, we have
\begin{equation*}
	\rho = \rho_0 + \alpha \left(  m_1 + \frac{m_2}{\varepsilon} (  1 - (\rho_2 + \varepsilon - \rho_0) ) \right) \left( \frac{m_2}{\varepsilon}\right)^{\alpha - 1} t.
\end{equation*}
Since $u_0^{\varepsilon}$ is non-decreasing the characteristics do not cross.
The Rankine-Hugoniot condition is now
\begin{equation*}
\frac{dS_2^{\varepsilon} }{dt} (t) = (m_1 + m_2) u_{\varepsilon }(t, S_2^\varepsilon(t)^-)^{\alpha-1}.
\end{equation*}

\paragraph{Passage to the limit as $\varepsilon \to 0$}
Passing to the limit we end up only with the rarefaction fan characteristics and recover \eqref{eq:u two deltas}
where the first shock is given by \eqref{eq:u two deltas S_1}
and the second shock, $S_2$ which defines the support, is a solution of the ODE
\begin{equation}
	\begin{dcases}
	\frac{dS_2 }{dt} (t) = (m_1 + m_2) \left( \left(  \frac{ S_2(t)  - \rho_2}{\alpha m_1 t} \right)  ^{- \frac \alpha {\alpha - 1}} + \alpha t  \right)^{-\frac {\alpha-1} \alpha} , \\
	S_2 (0) = \rho_2.
	\end{dcases}
\end{equation}
Notice that this equation is singular at $t = 0$ but it can be rewritten as
\begin{equation*}
		\frac{dS_2 }{dt} (t) = (m_1 + m_2) t^{-\frac{\alpha-1}{\alpha}} \left( t^{\frac 1{\alpha -1}}  \left(  \frac{ S_2(t)  - \rho_2}{\alpha m_1 } \right)  ^{- \frac \alpha {\alpha - 1}} + \alpha   \right)^{-\frac {\alpha-1} \alpha}. \\
\end{equation*}
Since $\frac{\alpha-1}{\alpha} \in (0,1)$  the Cauchy problem is well-posed.
Alternatively, one can write $S_2$ implicitly as the only value such that
\begin{equation*}
	\int_{\rho_2}^{S_2(t)} u(t, \rho) \diff \rho = m_2.
\end{equation*}
In other words,
\begin{equation}
	\label{eq:two deltas S2 implicit}
	\int_{\rho_2}^{S_2(t)} 	\left( \left(  \frac{ \rho - \rho_2}{\alpha m_1 t} \right)  ^{- \frac  \alpha {\alpha - 1}} + \alpha t  \right)^{-\frac 1 \alpha} \diff \rho = m_2.
\end{equation}
This solution is defined for $0 \le  t < T$ where
\begin{equation*}
	T = \frac{1}{\alpha} \left( \frac{\rho_2 - \rho_1 }{m_1} \right)^{\frac{\alpha}{\alpha - 1}}.
\end{equation*}
By scaling analysis on the integral, we can give an algebraic expression of $S_2(t)$. We apply the change of variables $ \rho = \rho_2 + \alpha m_1 s t^{\frac 1 \alpha}$ to deduce
\begin{align*}
m_2 &=  \int_{\rho_2}^{S_2(t)} 	\left( \left(  \frac{ \rho - \rho_2}{\alpha m_1 t} \right)  ^{- \frac  \alpha {\alpha - 1}} + \alpha t  \right)^{-\frac 1 \alpha} \diff \rho = \int_{0}^{ \frac{ S_2(t) - \rho_2}{\alpha m_1 t^{ \frac 1  \alpha} }  } 	\left( \left(  { s t^{- \frac {\alpha-1} {\alpha }} }\right)  ^{- \frac  \alpha {\alpha - 1}} + \alpha t  \right)^{-\frac 1 \alpha} t ^\frac {1}{\alpha} \alpha m_1 \diff s \\
&= \alpha m_1 \int_{0}^{ \frac{ S_2(t) - \rho_2}{\alpha m_1t^{ \frac 1  \alpha} }  } 	\left( { s }    ^{- \frac  \alpha {\alpha - 1}} + \alpha   \right)^{-\frac 1 \alpha}   \diff s
= \alpha m_1 \mathcal K \left(  \frac{S_2(t) - \rho_2}{\alpha m_1 t^{ \frac 1  \alpha} }   \right) .
\end{align*}
Hence, we recover \eqref{eq:two deltas support}.
To show \eqref{eq:two deltas scaling F inv} we simply indicate that, for $s\le s_0$
\begin{equation*}
	 { s }    ^{- \frac  \alpha {\alpha - 1}} \le { s }    ^{- \frac  \alpha {\alpha - 1}} + \alpha  \le C(s_0)  { s }    ^{- \frac  \alpha {\alpha - 1}}
\end{equation*}

\subsection{Waiting time}

\label{sec:waiting time viscous}

\subsubsection{Existence of waiting time}
We turn the explicit solution in  \eqref{eq:Ansatz} into a viscosity subsolution by extending it by zero, that is we define $\underline m (t, \rho)$ as
\begin{equation}
\label{eq:Ansatz viscosity}
\underline m(t, \rho) \defeq \begin{dcases}
0  & \text{if } \rho < c_0 - \alpha ^{\frac 1 \alpha} M (T-t)^{\frac 1 \alpha} \\
\left(  M^{\frac{\alpha}{\alpha - 1}} - \alpha^{\frac{1}{\alpha - 1}} \frac{(c_0 - \rho)^{\frac{\alpha}{\alpha - 1}}}{(T-t)^{\frac{1}{\alpha - 1}}}    \right)^{\frac{\alpha - 1}{\alpha}} 	& \text{if } \rho \in ( c_0 - \alpha ^{\frac 1 \alpha} M (T-t)^{\frac 1 \alpha} , c_0)\\
M & \text{if } \rho \ge  c_0 \\
\end{dcases}.
\end{equation}
\begin{proposition}
	$\underline m (t, \rho)$ is a viscosity subsolution of $m_t + m m_\rho^\alpha = 0$.
\end{proposition}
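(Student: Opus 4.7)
The function $\underline m$ is defined piecewise on three open regions separated by the free boundary $\rho = \rho_*(t) \defeq (c_0 - \alpha^{1/\alpha} M(T-t)^{1/\alpha})_+$ and the stationary interface $\rho = c_0$ (for $0 < t < T$). The plan is to verify the viscosity subsolution inequality $p_1 + (p_2)_+^\alpha \underline m(t,\rho) \le 0$ for every $(p_1,p_2) \in D^+ \underline m(t,\rho)$, separately on the three smooth pieces and on the two interfaces, taking advantage of the fact that the zero extension creates an ``infinite slope'' free boundary on the right.

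First, on each of the three open regions $\{\rho < \rho_*(t)\}$, $\{\rho_*(t) < \rho < c_0\}$ and $\{\rho > c_0\}$, the function $\underline m$ is $C^\infty$ and the equation holds classically: trivially in the two flat regions, where $\underline m \equiv 0$ and $\underline m \equiv M$ respectively have vanishing derivatives, and by the scaling computation carried out just before the statement in the middle Ansatz region. Classical satisfaction of the PDE immediately gives the viscosity subsolution inequality at all such points.

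Next, at an interior point of the line $\rho = c_0$ with $0 < t < T$, I would show that $\underline m$ is $C^1$ across the interface. From the explicit formula $u = m_\rho = c\,(c_0-\rho)^{1/(\alpha-1)}/(m^{1/(\alpha-1)}(T-t)^{1/(\alpha-1)})$ recorded in the preceding remark, $m_\rho$ of the Ansatz tends to $0$ as $\rho \to c_0^-$, matching the value $m_\rho = 0$ of the constant region on the right, and $m_t = -m\,m_\rho^\alpha$ also matches to $0$. Hence the viscosity condition reduces at these points to the classical identity $0 + M\cdot 0^\alpha = 0$.

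The only genuinely delicate step, and the main obstacle, is the analysis on the free boundary $\rho = \rho_*(t)$. I would begin by Taylor-expanding the argument $M^{\alpha/(\alpha-1)} - \alpha^{1/(\alpha-1)} (c_0-\rho)^{\alpha/(\alpha-1)}/(T-t)^{1/(\alpha-1)}$ around $\rho_*(t)$ to see that it vanishes linearly in $\rho - \rho_*(t)$, so that
$\underline m(t,\rho) \sim C(t)\,(\rho - \rho_*(t))^{(\alpha-1)/\alpha}$ as $\rho \to \rho_*(t)^+$, while $\underline m \equiv 0$ on the left. Since the exponent $(\alpha-1)/\alpha$ lies in $(0,1)$, the one-sided right derivative in $\rho$ is $+\infty$, so no finite $p_2$ can satisfy $C(t)\,s^{(\alpha-1)/\alpha} \le p_2 s + o(s)$ as $s \to 0^+$. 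A direct check from the definition of the Fréchet superdifferential therefore forces $D^+ \underline m(t,\rho_*(t)) = \emptyset$, and the subsolution inequality is vacuously satisfied at every free boundary point. This is the standard porous-medium-type mechanism that converts the $C^1$ Ansatz into a genuine viscosity subsolution after a zero extension; continuity of $\underline m$ across both interfaces (values $0$ at $\rho_*(t)$ and $M$ at $c_0$) is immediate from the defining formula, completing the verification.
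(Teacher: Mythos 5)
Your proof is correct and follows essentially the same route as the paper: the equation holds classically in the smooth regions (with the $C^1$ matching across $\rho=c_0$ coming from the explicit formula for $m_\rho$), and at the free boundary the infinite right slope of $\underline m \sim C(t)(\rho-\rho_*(t))^{(\alpha-1)/\alpha}$ forces $D^+\underline m=\emptyset$, so the subsolution inequality is vacuous there. Your explicit verification that no finite $p_2$ fits the superdifferential is just a more detailed version of the paper's one-line corner argument.
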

\begin{proof}
It is clear that $0$ is a solution of $m_t + m m_\rho^\alpha = 0$. So is the second part for $\rho > c_0 - c ^{\frac 1 \alpha} M (T-t)^{\frac 1 \alpha}$, as we have checked in \Cref{sec:waiting time}. At the matching point $\rho =  c_0 - c ^{\frac 1 \alpha} M (T-t)^{\frac 1 \alpha}$ , we have that
\begin{equation*}
	m_\rho(t, \rho^- ) = 0 , \qquad  	m_\rho (t, \rho^+ ) = c \left(  0^+   \right)^{-\frac{1}{\alpha}}  \frac{ (c_0 - \rho)^{\frac 1 \alpha}}{(T-t)^{\frac 1 \alpha}} = + \infty\\
\end{equation*}
This corner does not allow any smooth $\varphi$ to be tangent from above at this point, and hence the condition of viscosity subsolution is trivially satisfied.
\end{proof}

We will denote by $c_0 = \max \supp u_0$, where $u_0 = (m_0)_\rho$, that coincides with the boundary of $m_0 =  M$ in the sense that
\begin{equation}
	\label{eq:c0 support}
	m_0(\rho) < M \, \text{ for } \, \rho < c_0 \quad \text{and} \quad m_0(\rho) = M \, \text{ for } \, \rho \ge c_0.
\end{equation}

\begin{corollary}
	\label{thm:waiting time existence}
	Let $m_0 \in \BUC([0,+\infty))$ and let $c_0 = \max \supp u_0$. If
	\begin{equation}
		\limsup_{\rho \to c_0^-} \frac{M - m_0(\rho)}{(c_0 - \rho)^{\frac{\alpha}{\alpha-1}} }< +\infty,
	\end{equation}
	then there is waiting time as in \Cref{cor:waiting time charact}.
\end{corollary}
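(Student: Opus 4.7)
The plan is to invoke the comparison principle, \Cref{thm:comparison principle m}, against the explicit viscosity subsolution $\underline m$ from \eqref{eq:Ansatz viscosity}, choosing the mass parameter equal to the true total mass $M = \|m_0\|_\infty$ and the waiting time $T>0$ small enough that $\underline m(0,\rho) \le m_0(\rho)$ for every $\rho \ge 0$.

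First I would check the comparison at $t=0$ on three regions. For $\rho \ge c_0$ both sides equal $M$ by \eqref{eq:c0 support}. For $\rho \le c_0 - \alpha^{1/\alpha} M T^{1/\alpha}$, the subsolution vanishes, so the inequality is automatic. The nontrivial region is the transition zone $\rho \in (c_0 - \alpha^{1/\alpha} M T^{1/\alpha}, c_0)$, where raising the inequality $\underline m(0,\rho) \le m_0(\rho)$ to the power $\alpha/(\alpha-1)$ reduces it to
\begin{equation*}
	M^{\frac{\alpha}{\alpha-1}} - m_0(\rho)^{\frac{\alpha}{\alpha-1}} \le \alpha^{\frac{1}{\alpha-1}} T^{-\frac{1}{\alpha-1}} (c_0-\rho)^{\frac{\alpha}{\alpha-1}}.
\end{equation*}
Using the elementary bound $a^p - b^p \le p\,a^{p-1}(a-b)$ for $0 \le b \le a$ and $p>1$, with $p = \alpha/(\alpha-1)$, $a=M$, $b=m_0(\rho)$, the left-hand side is at most $\tfrac{\alpha}{\alpha-1} M^{1/(\alpha-1)} (M - m_0(\rho))$. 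By the hypothesis, there exist $\delta_0>0$ and a constant $C < \infty$ such that $M - m_0(\rho) \le C(c_0-\rho)^{\alpha/(\alpha-1)}$ whenever $\rho \in (c_0 - \delta_0, c_0)$. I then pick $T$ small enough that simultaneously $\alpha^{1/\alpha} M T^{1/\alpha} \le \delta_0$ (which forces the entire transition zone into the range where the hypothesis applies) and $\alpha^{1/(\alpha-1)} T^{-1/(\alpha-1)} \ge \tfrac{\alpha C}{\alpha-1} M^{1/(\alpha-1)}$ (which absorbs the constant $C$). Both conditions hold for all sufficiently small $T$.

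With this choice of $T$, \Cref{thm:comparison principle m} applied to $\underline m$ and $m$ on $[0,T]\times[0,+\infty)$ yields $\underline m(t,\rho) \le m(t,\rho)$ throughout. For $\rho \ge c_0$ and $t \in [0,T]$ we have $\underline m(t,\rho) = M$, while $m(t,\rho) \le \|m_0\|_\infty = M$ by \eqref{eq:estimates on solution}, so $m(t,\rho) = M$ on this set. Consequently $u(t,\cdot) = m_\rho(t,\cdot)$ vanishes on $\{\rho > c_0\}$, which, returning to the original variable, gives $\supp u(t,\cdot) \subset \overline{B_R}$ for $t \in [0,T]$, exactly as in \Cref{cor:waiting time charact}.

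The only substantive step is the pointwise comparison on the transition zone; the exponent $\alpha/(\alpha-1)$ in the hypothesis matches precisely the free-boundary profile of the ansatz, and this matching dictates the scale of the waiting time, which can be read off as $T \lesssim M^{-1} C^{-(\alpha-1)}$ up to an explicit constant depending only on $\alpha$.
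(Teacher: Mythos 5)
Your proof is correct and follows essentially the same route as the paper: comparison via \Cref{thm:comparison principle m} with the explicit Ansatz subsolution \eqref{eq:Ansatz viscosity}, using the same convexity bound $a^p-b^p\le p\,a^{p-1}(a-b)$ with $p=\tfrac{\alpha}{\alpha-1}$ to verify $\underline m(0,\cdot)\le m_0$ and then reading off the waiting time from $m\ge\underline m$. The only (harmless) deviation is that the paper first upgrades the $\limsup$ hypothesis to a supremum bound on all of $[0,c_0]$ by a contradiction argument, whereas you shrink $T$ so that the transition zone of $\underline m(0,\cdot)$ sits inside the neighbourhood of $c_0$ where the bound holds; both choices of $T$ are admissible.
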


\begin{proof}
	First, we prove that
	\begin{equation*}
		\sup_{ \rho \in [0, c_0]} \frac{M - m_0(\rho)}{(c_0 - \rho)^{\frac{\alpha}{\alpha-1}} } < +\infty.
	\end{equation*}
	Let $\rho_k$ be such that
		\begin{equation*}
	 \frac{M - m_0(\rho_k)}{(c_0 - \rho_k)^{\frac{\alpha}{\alpha-1}} } \longrightarrow \sup_{ \rho \in [0, c_0]} \frac{M - m_0(\rho)}{(c_0 - \rho)^{\frac{\alpha}{\alpha-1}} }.
	\end{equation*}
	If the supremum were infinite, since $M - m_0 (\rho_k)$ is bounded, then we have that $\rho_k \to c_0$. This results in
	\begin{equation*}
		\lim_k \frac{M - m_0(\rho_k)}{(c_0 - \rho_k)^{\frac{\alpha}{\alpha-1}} } \le \limsup_{\rho \to c_0^-} \frac{M - m_0(\rho)}{(c_0 - \rho)^{\frac{\alpha}{\alpha-1}} } < + \infty
	\end{equation*}
	leading to a contradiction.
	
	Therefore, there exists $C > 0$ such that for all $\rho \in [0,c_0]$
	\begin{equation*}
		\frac{M - m_0(\rho)}{(c_0 - \rho)^{\frac{\alpha}{\alpha-1}} } \le C.
	\end{equation*}
	In particular, we have that
	\begin{equation*}
				m_0(\rho) \ge M -  C (c_0 - \rho)^{\frac{\alpha}{\alpha-1}}.
	\end{equation*}
	We can apply the convexity of the function $f(x) = x^{\frac \alpha {\alpha - 1}}$ to show that
	\begin{equation*}
		m_0(\rho)^{\frac {\alpha - 1} \alpha} \ge M^{\frac \alpha {\alpha - 1}} - \tfrac{\alpha}{\alpha - 1} M^{\frac 1 {\alpha - 1}} C (c_0 - \rho)^{\frac \alpha {\alpha - 1} }
		=   M^{\frac{\alpha}{\alpha - 1}} - \frac{\alpha^{\frac{1}{\alpha - 1}}} {T^{\frac{1}{\alpha - 1}}}  {(c_0 - \rho)_+^{\frac{\alpha}{\alpha - 1}}}   = \underline m (0, \rho)^{\frac {\alpha - 1} \alpha},
	\end{equation*}
	for a well chosen $T$ (see, e.g. \Cref{fig:comparison}). Therefore, applying the comparison principle \Cref{thm:comparison principle m} then $m \ge \underline m$, and thus $m$ has waiting time.
\end{proof}

\begin{figure}[H]
	\centering
	\includegraphics[width=.32 \textwidth]{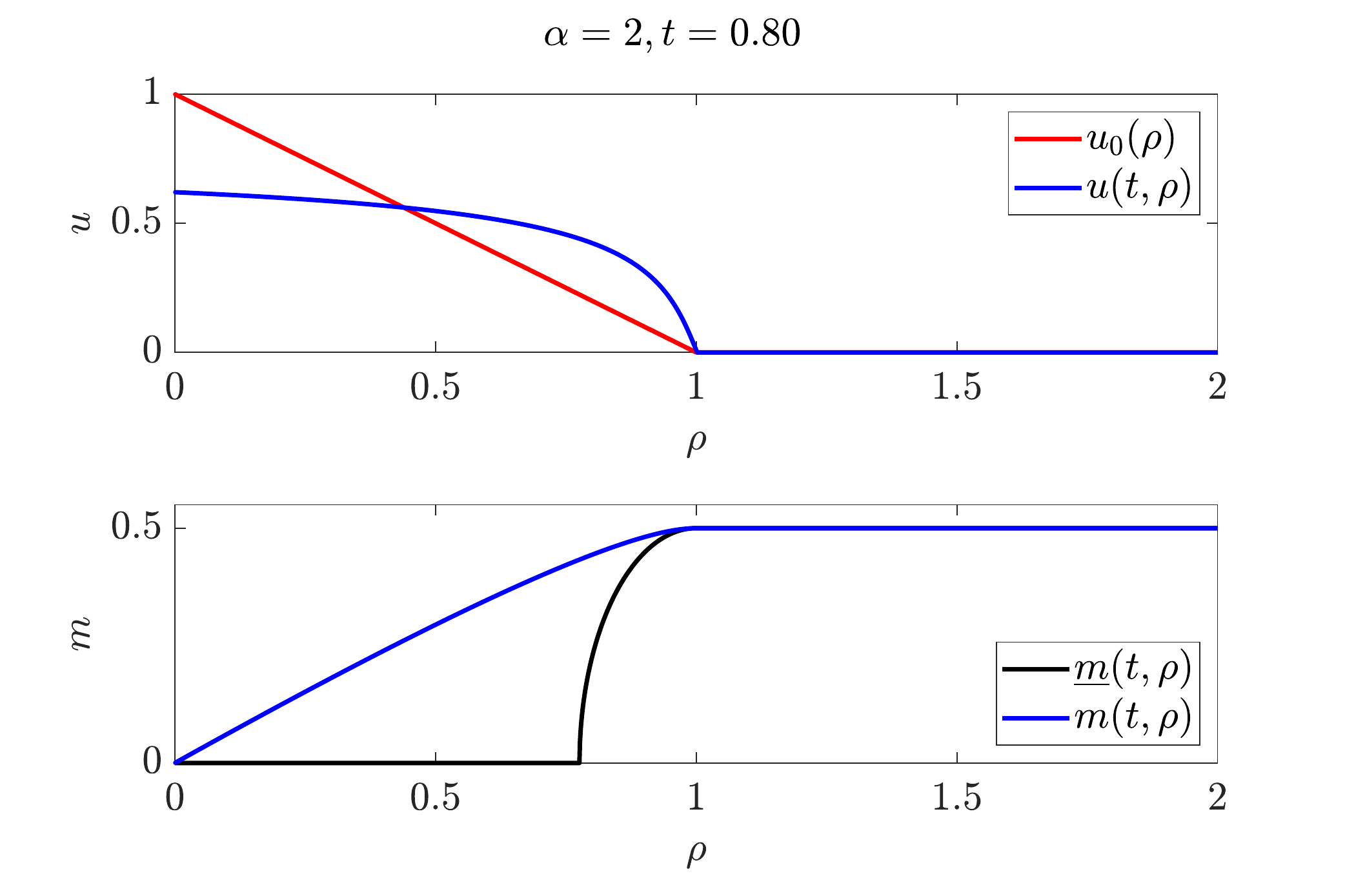}
	\includegraphics[width=.32 \textwidth]{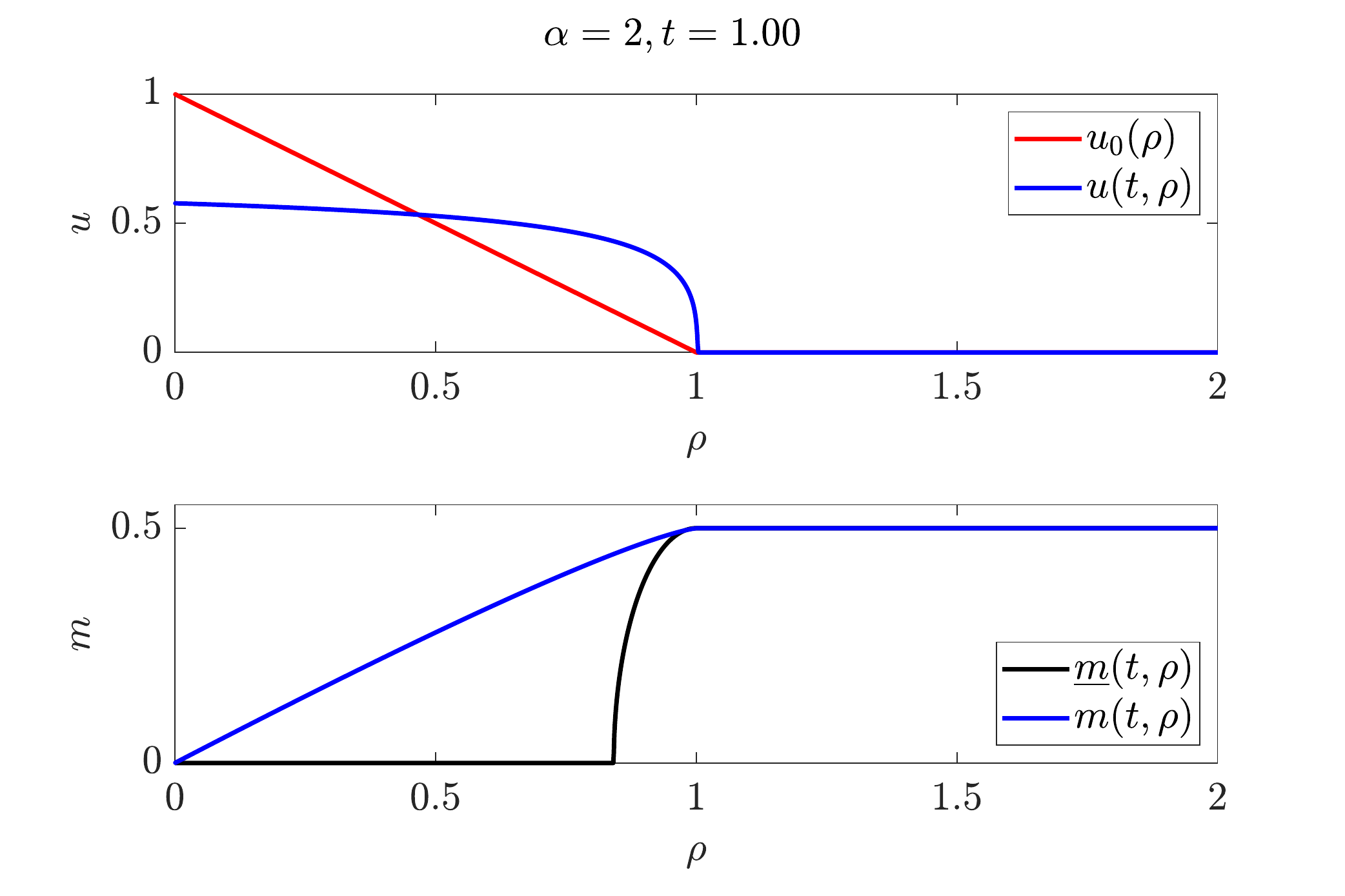}
	\includegraphics[width=.32 \textwidth]{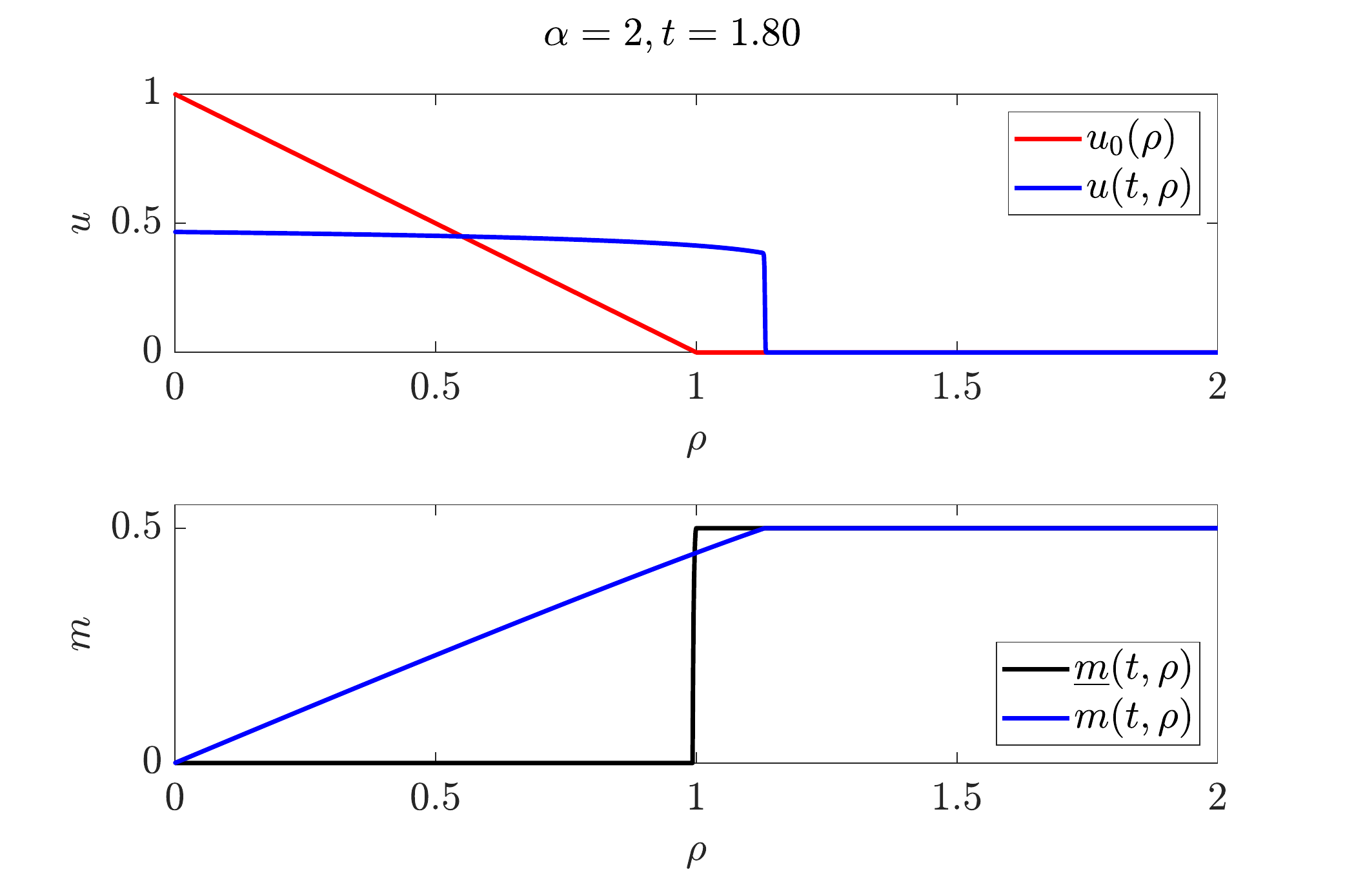}
	\caption{The explicit Ansatz viscosity subsolution \eqref{eq:Ansatz viscosity} guarantees existence of waiting time. The subsolution is represented from the explicit solution, whereas $u$ is computed through the numerical scheme in \Cref{sec:numerics}.
		See a movie simulation in the supplementary material \cite[Video 1]{Videos}.}
	\label{fig:comparison}
\end{figure}

\subsubsection{Non-existence of waiting time}

\begin{theorem}
	\label{thm:waiting time non-existence}
	Let $m_0 \in \BUC ( [0,+\infty)  )$ and let $c_0= \max \supp u_0$. If
	\begin{equation}
		\limsup_{\rho \to c_0^-} \frac{M - m_0(\rho)}{(c_0 - \rho)^{\frac{\alpha}{\alpha-1}} }= +\infty,
	\end{equation}
	then there is no waiting time.
\end{theorem}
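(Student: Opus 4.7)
My plan is to produce, for each $y < c_0$, an explicit classical supersolution of the mass equation above $m$ and then evaluate it at $\rho = c_0$. Taking the cue from the separated-variable Ansatz of Section~3.4 but replacing the subtraction inside the bracket by an addition (and the singular time $T-t$ by $t$ itself, so that the new profile blows up as $t\to 0^+$ instead of having a front at $t=T$), I set
\[
\overline m_y(t,\rho) \defeq \Bigl(m_0(y)^{\frac{\alpha}{\alpha-1}} + \alpha^{-\frac{1}{\alpha-1}}(\rho-y)^{\frac{\alpha}{\alpha-1}} t^{-\frac{1}{\alpha-1}}\Bigr)^{\frac{\alpha-1}{\alpha}}
\]
on $\{t>0,\ \rho\ge y\}$. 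The exponent balance carried out for the Ansatz in Section~3.4 goes through verbatim and shows that $\overline m_y$ solves $m_t + m(m_\rho)^\alpha = 0$ classically on $\rho>y$, $t>0$.

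Next I will check $\overline m_y\ge m$ on this region via the Crandall--Lions comparison principle of Theorem~\ref{thm:comparison principle m}. At the lateral boundary $\rho=y$ one has $\overline m_y(t,y)=m_0(y)\ge m(t,y)$, the latter because $m_t = -m(m_\rho)^\alpha\le 0$ forces $m(\cdot,y)$ to be nonincreasing. As $t\to 0^+$ with $\rho>y$ fixed, $\overline m_y(t,\rho)\to+\infty$, so dominating $m_0$ is trivial after a short-time cutoff. The main technical hurdle is justifying the comparison on the restricted parabolic strip $\{t>0,\ \rho>y\}$ with an unbounded initial trace; I plan to address it by comparing the shifted supersolution $\overline m_y(\cdot+\varepsilon,\cdot)$, which is bounded and uniformly continuous on $\{t\ge 0,\ \rho\ge y\}$, with $m$, and then letting $\varepsilon\to 0^+$ by continuous dependence on the data.

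Once the comparison is in place, the theorem reduces to the algebraic statement that $\overline m_y(t,c_0)<M$ for a suitable $y$, that is
\[
M^{\frac{\alpha}{\alpha-1}} - m_0(y)^{\frac{\alpha}{\alpha-1}} > \alpha^{-\frac{1}{\alpha-1}}(c_0 - y)^{\frac{\alpha}{\alpha-1}} t^{-\frac{1}{\alpha-1}}.
\]
The convexity of $s\mapsto s^{\alpha/(\alpha-1)}$ yields $M^{\alpha/(\alpha-1)} - m_0(y)^{\alpha/(\alpha-1)} \ge \tfrac{\alpha}{\alpha-1} m_0(y)^{1/(\alpha-1)}(M-m_0(y))$. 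By the hypothesis there is a sequence $y_n\to c_0^-$ along which $(M-m_0(y_n))/(c_0-y_n)^{\alpha/(\alpha-1)}\to+\infty$, and continuity of $m_0$ at $c_0$ (where $m_0=M$) keeps $m_0(y_n)^{1/(\alpha-1)}$ bounded away from zero. Hence for any fixed $t>0$ the resulting lower bound, divided by $(c_0-y_n)^{\alpha/(\alpha-1)}$, blows up and overtakes the bounded right-hand side for $n$ large. This forces $m(t,c_0)<M$ for every $t>0$, ruling out any waiting time.
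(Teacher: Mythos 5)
Your explicit barrier is genuinely an exact solution: setting $\theta=\overline m_y^{\alpha/(\alpha-1)}$ reduces the check to $\theta_t+\left(\tfrac{\alpha-1}{\alpha}\right)^{\alpha-1}\theta_\rho^\alpha=0$, and the bracket vanishes exactly for the constant $\alpha^{-1/(\alpha-1)}$ you chose; the final convexity and $\limsup$ algebra, and the conclusion $m(t,c_0)<M$ for every $t>0$, are also correct. This is a different route from the paper, which instead dominates $m$ by the explicit two-Dirac-delta solutions of the mass equation and reads off the support speed $S_k(t)=\rho_k+\alpha m_0(\rho_k)\,\mathcal K^{-1}\bigl(\tfrac{M-m_0(\rho_k)}{\alpha m_0(\rho_k)}\bigr)t^{1/\alpha}$ from the scaling bounds on $\mathcal K^{-1}$. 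However, your justification of the key comparison $m\le\overline m_y$ has a genuine gap: for fixed $\varepsilon>0$ the time-shifted barrier does \emph{not} dominate $m_0$ at $t=0$ near the lateral boundary. Indeed, near $\rho=y$ one has
\begin{equation*}
\overline m_y(\varepsilon,\rho)-m_0(y)\;\approx\;C\,\varepsilon^{-\frac{1}{\alpha-1}}(\rho-y)^{\frac{\alpha}{\alpha-1}},
\end{equation*}
with exponent $\tfrac{\alpha}{\alpha-1}>1$, while $m_0(\rho)-m_0(y)=\int_y^\rho u_0$ grows linearly whenever $u_0$ has positive density immediately to the right of $y$. Since you must take $y=y_n\to c_0^-$ with $m_0(y_n)<M$, mass in $(y_n,c_0]$ is unavoidable and this is precisely the relevant regime; so the hypothesis of Theorem~\ref{thm:comparison principle m} fails on a strip $\rho-y\lesssim\varepsilon$, and letting $\varepsilon\to0^+$ by ``continuous dependence'' does not repair an inequality that is false for every fixed $\varepsilon$.

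Two further points need attention but are minor: the comparison theorem is stated on the quarter-plane with zero lateral data and for bounded (BUC) functions, so you must both translate the domain to $\{\rho>y\}$ with lateral data $m_0(y)\ge m(t,y)$ (the time-monotonicity of $m$ you invoke is fine; it is used in the paper's convergence proof) and tame the linear growth of $\overline m_y$ in $\rho$, e.g.\ by replacing it with $\min\{\overline m_y,M\}$, which is again a supersolution. The real gap above can be fixed along your lines: use the barrier with $m_0(y)$ replaced by $m_0(y+\eta)$, i.e.
\begin{equation*}
\Bigl(m_0(y+\eta)^{\frac{\alpha}{\alpha-1}}+\alpha^{-\frac{1}{\alpha-1}}(\rho-y)^{\frac{\alpha}{\alpha-1}}(t+\varepsilon)^{-\frac{1}{\alpha-1}}\Bigr)^{\frac{\alpha-1}{\alpha}}.
\end{equation*}
For fixed $\eta>0$ and $\varepsilon$ small enough the initial ordering does hold: on $[y,y+\eta]$ because the barrier is at least $m_0(y+\eta)\ge m_0(\rho)$, and on $[y+\eta,\infty)$ because the second term exceeds $M$ when $\varepsilon$ is small. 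After comparison, let first $\varepsilon\to0$ and then $\eta\to0$ (using continuity of $m_0$) to recover exactly the inequality you evaluate at $\rho=c_0$; the rest of your argument then goes through.
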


\begin{proof}
	To prove there is no waiting time, we want to show that $S(t) > c_0$ for any $t>0$.
	 In order to do this, we will construct a sequence of supersolutions $\overline m_k$ with $S_k (t) = \max \supp (\overline m_k)_\rho (t, \cdot)$ and times $t_k \searrow 0$ such that $S_k (t_k) > c_0$. This ensures that, for some $k$ we have $0 < t_k < t$ and hence $S(t) \ge  S_k (t) \ge S_k (t_k) > c_0$.

	Let us consider a sequence $d_k$ such that
	\begin{equation*}
		d_k \to \limsup_{\rho \to c_0^-} \frac{M - m_0(\rho)}{(c_0 - \rho)^{\frac{\alpha}{\alpha-1}} }.
	\end{equation*}
	There exists $\rho_k \nearrow c_0$ such that
	\begin{equation}
		\label{eq:waiting time comparison}
		M - m_0(\rho_k) \ge d_k ( c_0 - \rho_k )^{\frac{\alpha}{\alpha-1}}.
	\end{equation}
	We construct the viscosity supersolutions $\overline m_k$ with initial derivative
	\begin{equation*}
		u_{k,0} = m_0(\rho_k) \delta_0 + (M - m_0(\rho_k)) \delta_{\rho_k}.
	\end{equation*}
	It is clear that $\overline m_k (0, \rho) \ge m(0, \rho)$. By using the comparison principle \Cref{thm:comparison principle m}, $\overline m_k \ge m$.
	
	 Now we apply the theory of \Cref{sec:two deltas}. We will select $t_k > 0$ such that $S_k(t) \ge  c_0 + \varepsilon_k$ for $t \ge t_k$ with $\varepsilon_k = \frac {c_0 - \rho_k}2 > 0 $. Using \eqref{eq:two deltas support}, \eqref{eq:two deltas support scaling} and \eqref{eq:waiting time comparison} we have that
	\begin{equation*}
			 S_k(t) = \rho_k + \alpha m_0 (\rho_k) \mathcal K^{-1} \left(  \frac{M - m_0 (\rho_k)}{ \alpha m_0(\rho_k) }\right)   t^{\frac 1 \alpha} \ge   \rho_k + \alpha m_0 (\rho_k)  \mathcal K^{-1} \left(  \frac{d_k(c_0 - \rho_k)^{\frac \alpha{\alpha - 1}}}{ \alpha m_0(\rho_k) }\right)   t^{\frac 1 \alpha}.
	\end{equation*}
	Due to our choice of $\rho_k$, it is sufficient that
	\begin{equation*}
		\rho_k + \alpha m_0 (\rho_k)  \mathcal K^{-1} \left(  \frac{d_k(c_0 - \rho_k)^{\frac \alpha{\alpha - 1}}}{ \alpha m_0(\rho_k) }\right)   t^{\frac 1 \alpha}  \ge c_0 + \varepsilon_k  .
		\end{equation*}
	Solving for $t$, we have that
	\begin{equation}
	t \ge   \left( \frac{  \frac{ (c_0 - \rho_k) + \ee_k }{ \alpha m_0 (\rho_k) } }{\mathcal K^{-1} \left(  \frac{d_k(c_0 - \rho_k)^{\frac \alpha{\alpha - 1}}}{ \alpha m_0(\rho_k) }\right)}  \right) ^{\alpha }.
	\end{equation}
	We know that $d_k(c_0 - \rho_k)^{\frac \alpha{\alpha - 1}} \le  M - m_0(\rho_k) \to 0$, therefore we need to study $\mathcal K^{-1}$ close to $0$. Going back to \eqref{eq:two deltas scaling F inv} there exists a constant $C > 0$ such that
	\begin{equation*}
			\left( \frac{  \frac{ (c_0 - \rho_k) + \ee_k }{ \alpha m_0 (\rho_k) } }{\mathcal K^{-1} \left(  \frac{d_k(c_0 - \rho_k)^{\frac \alpha{\alpha - 1}}}{ \alpha m_0(\rho_k) }\right)}  \right) ^{\alpha } \ge C  \left( \frac{  \frac{ (c_0 - \rho_k) + \varepsilon_k }{ \alpha m_0 (\rho_k) } }{\left(  \frac{d_k(c_0 - \rho_k)^{\frac \alpha{\alpha - 1}}}{ \alpha m_0(\rho_k) }\right)^{ \frac {\alpha - 1 } \alpha }}  \right) ^{\alpha }   =  \frac{3^\alpha }{2^{\alpha}} \frac{  C}{ d_k^{  {\alpha - 1 } } } =: t_k .
	\end{equation*}
	Due to the hypothesis of the theorem $d_k \to +\infty$ and hence  $t_k \to 0$.
\end{proof}

See a movie simulation of one of the mass supersolutions interacting with a solution without waiting time in the supplementary material \cite[Video 2]{Videos}.
\begin{remark}
	Notice that if the $\limsup$ is finite, the previous proof can be adapted to show that the supersolutions $\overline m_k$ give an upper bound of the waiting time.
\end{remark}

\begin{remark}
	As pointed out in \Cref{cor:waiting time charact}, the spatial support of classical solutions does not change in time.  Taking $c_0 = \max \supp u_0$, we construct the supersolution $\overline m$ with initial derivative
	\begin{equation*}
	\overline u_{0} = m_0\left( \frac {c_0}2 \right) \delta_0 + \left(M - m_0 \left( \frac {c_0}2 \right) \right) \delta_{\frac {c_0}2 }.
	\end{equation*}
	This supersolution shows that the support of $u$ must move after a finite time and therefore that the solution is no longer classical.
\end{remark}

\subsection{Asymptotic behaviour}
\label{sec:asymptotic behaviour}

We give first a general result of asymptotic behaviour in mass variable.
\begin{theorem}
	Assume that $u_0 \in L^\infty(0,\infty)$ has compact support, $M = \| u_0 \|_{L^1}$, $m$ be the viscosity solution with initial data $m_0$ and $S(t) = \inf \{ \rho : m(t,\rho) = M \}$. Then $S(t) \sim M (\alpha t)^{\frac 1 \alpha}$ with estimate
	\begin{equation}
	\label{eq:S estimate}
		0 \le \frac{S(t)}{M(\alpha t)^{\frac 1 \alpha}} - 1 \le  \frac {S(0)}M (\alpha t)^{-\frac 1 \alpha}.
	\end{equation}
	Furthermore, $m$ has the asymptotic profile in rescaled variable $y = \frac{\rho}{M(\alpha t)^{\frac 1 \alpha}} $ with an asymptotic estimate
	\begin{equation}
		\sup_ { y \ge \varepsilon } \left| \frac{m\left( t ,  M (\alpha t)^{\frac 1 \alpha} y \right) }{M G (y)} - 1\right|  \to 0 , \quad \text{ as } t \to +\infty
		 \quad \text{where }
		G( y   ) = \begin{dcases}
		y  & y\le 1 \\
		 1 & y > 1\\
		 \end{dcases}
	\end{equation}
	for any $\varepsilon > 0$.
\end{theorem}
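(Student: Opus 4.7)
The plan is to sandwich $m$ between the two explicit Dirac-mass vortex solutions constructed in Remark~\ref{rem:one Delta data} and then read off both \eqref{eq:S estimate} and the profile convergence from this comparison. Let $R_0 = \max \supp u_0$ in the volume variable. Since $u_0 \ge 0$ and $\int u_0 = M$, the primitive $m_0$ is non-decreasing with $m_0(\rho) = M$ exactly on $[R_0,\infty)$, so $S(0) = R_0$.

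I would take as upper comparator the viscosity solution $\overline m$ associated with $\overline u_0 = M\delta_0$,
\begin{equation*}
\overline m(t,\rho) = \min\bigl\{(\alpha t)^{-1/\alpha}\rho,\, M\bigr\} = M\, G\!\left(\tfrac{\rho}{M(\alpha t)^{1/\alpha}}\right),
\end{equation*}
and as lower comparator the viscosity solution $\underline m$ associated with $\underline u_0 = M\delta_{R_0}$, which by Remark~\ref{rem:one Delta data} vanishes on $[0,R_0)$, equals $(\alpha t)^{-1/\alpha}(\rho-R_0)$ on $[R_0, R_0+M(\alpha t)^{1/\alpha}]$, and equals $M$ beyond. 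Since $0 \le \underline m_0 \le m_0 \le \overline m_0 \le M$ pointwise, the comparison principle \Cref{thm:comparison principle m}, applied after the standard $\BUC$-approximation of the Dirac-mass data (as already used in Remark~\ref{rem:one Delta data}), yields $\underline m \le m \le \overline m$ for every $t > 0$.

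The support estimate then drops out: using $m \le M$ from \eqref{eq:estimates on solution}, whenever $\underline m(t,\rho) = M$ we have $m(t,\rho) = M$, giving $S(t) \le R_0 + M(\alpha t)^{1/\alpha}$; and whenever $\overline m(t,\rho) < M$ we have $m(t,\rho) < M$, giving $S(t) \ge M(\alpha t)^{1/\alpha}$. Combined with $R_0 = S(0)$ this is exactly \eqref{eq:S estimate}.

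For the profile convergence, I would fix $\varepsilon > 0$, take $t$ large enough that $R_0/(M(\alpha t)^{1/\alpha}) < \varepsilon$, and evaluate the sandwich at $\rho = M(\alpha t)^{1/\alpha} y$ for $y \ge \varepsilon$. The upper comparator gives $m/(MG(y)) \le \overline m/(MG(y)) = 1$. Splitting the range of $y$ according to the piecewise definition of $\underline m$, a direct computation yields
\begin{equation*}
\frac{\underline m(t,\, M(\alpha t)^{1/\alpha} y)}{M\,G(y)} \ge 1 - \frac{R_0}{M\,\varepsilon\,(\alpha t)^{1/\alpha}}
\end{equation*}
uniformly in $y \ge \varepsilon$, so $m/(MG(y)) \to 1$ uniformly on $\{y \ge \varepsilon\}$ as $t \to +\infty$. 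The only mildly delicate point in the whole argument is the approximation step that makes \Cref{thm:comparison principle m} applicable to the Dirac-mass comparators; beyond this, everything reduces to the explicit formulas of Remark~\ref{rem:one Delta data} and the $L^\infty$ bound on $m$.
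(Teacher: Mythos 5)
Your proposal is correct and follows essentially the same route as the paper: sandwiching $m$ between the Heaviside-data vortex solutions of \Cref{rem:one Delta data} via the comparison principle \Cref{thm:comparison principle m}, and reading both \eqref{eq:S estimate} and the rescaled profile convergence off their explicit self-similar forms. The only difference is that you spell out the uniform lower bound $1 - R_0/(M\varepsilon(\alpha t)^{1/\alpha})$ on the ratio, which the paper leaves implicit.
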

\begin{proof}
	By \Cref{rem:one Delta data}  we take as super and subsolution $\overline m$ and $\underline m$ with initial data
	\begin{equation*}
		\overline m_0 (\rho) = M H_0 (\rho), \qquad \underline m_0 (\rho) = M H_{S(0)} (\rho).
	\end{equation*}
	Hence $\overline m \ge m \ge \underline m$. Due to the explicit form of $\overline m$ and $\underline m$, we have that
	\begin{equation*}
		M (\alpha t)^{\frac 1\alpha} \le S(t) \le S(0) + M (\alpha t)^{\frac 1\alpha} .
	\end{equation*}
	Due to the self-similar form of $\overline m$ and $\underline m$ given in \Cref{rem:one Delta data}, the result is proven.
\end{proof}

\begin{remark}
	Notice that for $m_0 = 0$ in $[0,a]$, we have $m(t,\rho) = 0$ in $[0, a]$ so the supremum of $y \ge 0$ is always 1. If $u_0$ is continuous and $u_0 (0) > 0$, then the supremum can be taken for $y \ge 0$.
\end{remark}

Let us discuss the asymptotic behaviour when the datum is monotone non-decreasing with final cut-off. We recall \eqref{eq:jump sol u_0}-\eqref{eq:jump sol explicit} and define
\begin{equation*}
	U(t, \xi) = \frac{u(t, (\alpha t)^{\frac 1 \alpha} \xi)}{(\| u_0 \|_{L^\infty}^{-\alpha} + \alpha t )^{-\frac 1 \alpha}}.
\end{equation*}
Since the solution is constructed by characteristics we have that
\begin{equation*}
U(t, \xi) = \begin{dcases}
0 & \text{if } u_0 ((\alpha t)^{-\frac 1 \alpha} \xi) = 0 \\
\left( \frac{ \eta_0 (t,\xi)^{-\alpha} + \alpha t   }{\| u_0 \|_{L^\infty}^{-\alpha} + \alpha t } \right)^{-\frac 1 \alpha} & \text{if } u_0 ((\alpha t)^{-\frac 1 \alpha} \xi) >  0 \text{ and } (\alpha t )^{\frac 1 \alpha} \xi \le S(t) \\
0  & \text{if } (\alpha t )^{\frac 1 \alpha} \xi > S(t)
\end{dcases}
\end{equation*}
where $\eta_0 (t,\xi) \in (0, \| u_0 \|_{L^\infty})$.
Due to \eqref{eq:S estimate}, as $t \to +\infty$ we have that
\begin{equation*}
	U(t, \xi) \to \begin{dcases}
	1 & \text{if } \xi \in (0,M) \\
	0 & \text{if } \xi \in (M,+\infty).
	\end{dcases}
\end{equation*}
The value at $0$ depends on whether $u_0 (0) = 0$.

\section{A numerical scheme}

\label{sec:numerics}

In the pioneering paper by Crandall and Lions \cite{Crandall1984}, the authors developed a theory of monotone schemes for finite differences of Hamilton-Jacobi equations, where solutions are shown to converge to the viscosity solution. They study equations of the form
\begin{equation}
\label{eq:mass general}
m_t + H(m_\rho) = 0.
\end{equation}
For these equations it is natural to develop only explicit methods. However, for our case $m_t + H(m_\rho) m = 0$, we will see that it more natural, and probably more stable, to do an explicit-implicit approximation of the non-linear term $H(m_\rho) m$. In fact, since the nonlinear term is linear in $m$, we can solve for the implicit step in an explicit manner. More precisely, considering an equispaced discretization
\begin{equation}
t_n = h_t n \qquad \rho_j =  h_\rho j.
\end{equation}
We select the following finite-difference schemes
\begin{equation*}
\frac{M_j^{n+1} - M_j^n}{h_t} + \left( \frac{M_{j}^{n} - M_{j-1}^{n}}{h_\rho} \right)^\alpha M_j^{n+1} = 0	
\end{equation*}
which can be written as
\begin{equation}
\label{eq:method interior}
M_j^{n+1} = \frac{ M_j^n  }{1 + h_t \left( \dfrac{M_{j}^n - M_{j-1}^n}{h_\rho} \right)_+^\alpha} = G(M_j^n, M_{j-1}^n).
\end{equation}
Here, $G$ is given by
\begin{equation*}
G(p,q) = \frac{ p  }{1 + h_t H \left( \frac{p-q}{h_\rho} \right)}, \qquad  \text{ where } H(s) = s_+^\alpha.
\end{equation*}
Notice that the method depends only on the parameter $h_t/h_\rho^\alpha$. %
Taking derivatives we have that
\begin{equation*}
\frac{\partial G}{\partial p} = \frac{1 + h_t H \left( \frac{p-q}{h_\rho} \right) - \frac{h_t}{h_\rho} H'\left( \frac{p-q}{h_\rho} \right) p }{\left(  1 + h_t H \left( \frac{p-q}{h_\rho} \right) \right)^2}, \qquad \frac{\partial G}{\partial q} = \frac{ph_tH'\left( \frac{p-q}{h_\rho} \right)}{h_\rho \left(  1 + h_t H \left( \frac{p-q}{h_\rho} \right) \right)^2} \ge 0
\end{equation*}
Then $G$ is non-decreasing in $p$ under the simple CFL condition:
\begin{equation}
\label{eq:CFL general}
\frac{h_t}{h_\rho} H'\left( \frac{p-q}{h_\rho} \right) p \le \frac 1 2.
\end{equation}
Since the denominator in $G$ is larger than $1$, we have that $G(p,q) \le p$. This is immediately translated to a maximum principle for $M_j^n$
\begin{equation}
M_j^{n+1} \le M_j^n \le \| m_0 \|_\infty.
\end{equation}
For $\alpha \ge 1$ we have two options to obtain a CFL condition. We can check whether the numerical derivative is bounded (this can be done for some methods, see \Cref{sec:estimate numerical derivative}) or cut-off the equation by a nice value. For $m_0$ fixed, since $m_\rho \le \| (m_0)_\rho \|_{L^\infty}$ due to \eqref{eq:estimates on solution}, the equation \eqref{eq:mass general}
where $H(s) = s_+^\alpha$ is equivalently to itself with
\begin{equation}
\label{eq:cut-off H}
H(s) =\left(  \max\{ s , \| (m_0)_\rho \|_\infty \} \right)_+^\alpha.
\end{equation}
We write this cut-off to ensure monotonicity. Nevertheless, once the method is monotone, \Cref{lem:boundedness of numerical derivative} ensures that the cut-off part is not reached. Hence, this cut-off is  purely technical.

\medskip

For $\alpha \ge 1$ this new $H$ given by \eqref{eq:cut-off H} satisfies
\begin{equation*}
0 \le H' (s) \le \alpha  \| (m_0)_\rho \|_\infty^{\alpha - 1}.
\end{equation*}
Therefore, \eqref{eq:CFL general} can be taken as
\begin{equation}
\label{eq:CFL}
\tag{CFL}
\frac{h_t}{h_\rho} \le \frac 1 {2 \alpha  \| (m_0)_\rho \|_\infty^{\alpha - 1} \| m_0 \|_\infty }.
\end{equation}
We propose the scheme
\begin{equation}
\tag{M}
\label{eq:method}
\begin{dcases}
{M_j^{n+1}} = \frac{ M_j^n } { 1 +  h_t   H \left(\dfrac {M_j^n - M_{j-1}^n}{h_\rho} \right)  } & \text{if } j>0, n \ge 0 \\
M_j^0 = m_0(h_\rho j) & \text{if } j \ge  0 \\
M_0^n = 0 & \text{if }n > 0.
\end{dcases}
\end{equation}

\begin{remark}
		As we pointed out in \cite{Carrillo+GV+Vazquez2019}, for $0 < \alpha < 1$ this method is not monotone.
		This was fixed by regularising $H$. For $\delta > 0$ we take
	\begin{equation}
	H_\delta( s ) =  ( s_+ + \delta )^\alpha - \delta^\alpha.
	\end{equation}
	By including the boundary and initial condition, we constructed the method
	\begin{equation}
	\tag{M$_\delta$}
	\label{eq:method delta}
	\begin{dcases}
	{M_j^{n+1}} = \frac{ M_j^n } { 1 +  h_t   H_\delta \left(\dfrac {M_j^n - M_{j-1}^n}{h_\rho} \right)  } & \text{if } j>0, n \ge 0 \\
	M_j^0 = m_0(h_\rho j) & \text{if } j \ge  0 \\
	M_0^n = 0 & \text{if }n > 0.
	\end{dcases}
	\end{equation}
	with this regularisation we know that $H_\delta' (s) \le \alpha \delta^{\alpha - 1}$ so we have a CFL condition
	\begin{equation}
	\tag{CFL$_\delta$}
	\label{eq:CFL delta}
	\frac{h_t}{h_\rho} \le  \frac {\delta ^{1-\alpha }} { 2 \alpha \| m_0 \|_\infty  } .
	\end{equation}
	In \cite{Carrillo+GV+Vazquez2019} we made $\delta$ to converge to $0$ with $h_t$ and $h_\rho$, showing the convergence of the numerical solutions.
\end{remark}

\subsection{Properties of monotone methods}
The following properties of \eqref{eq:method} when $G$ is monotone in each variable are a classical matter (see the original result in \cite{Crandall1984} and the presentation and references in \cite{Achdou2013}). We just briefly sketch them for completeness.
\begin{lemma}
	\label{lem:properties of monotone methods}
	Let $\alpha \ge 1$, $m_0 \ge 0$ and bounded and consider the sequence $M_j^n$ constructed by \eqref{eq:method} and assume  \eqref{eq:CFL}. We have the following properties:
	\begin{enumerate}
		\item 	$M_j^{n+1}= G (M_j^n, M_{j-1}^n)$ where
		$G$ is non-decreasing.
		\item $M_j^n \le \|m_0\|_{\infty}$
		\item If $m_0 \ge 0$ is non-decreasing then:
		\begin{enumerate}
			\item $0 \le M_j^n \le M_{j+1}^n$ for all $n, j$
			\item There is mass conservation in the numerical scheme
			\begin{equation*}
			M_\infty^{n+1} = \lim _{j \to +\infty} M^{n+1}_j = \lim_{j \to +\infty} M_j^n = {M_{\infty}^n}.
			\end{equation*}
		\end{enumerate}
	\end{enumerate}
\end{lemma}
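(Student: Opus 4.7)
The plan is to establish the four items by induction on the time step $n$, with a mild bootstrap between items (1) and (2). The starting point is the computation already displayed in the text preceding the lemma: $\partial G/\partial q \ge 0$ unconditionally, while $\partial G/\partial p \ge 0$ is equivalent to
\[
\frac{h_t}{h_\rho}\, H'\!\left(\frac{p-q}{h_\rho}\right) p \le 1 + h_t\, H\!\left(\frac{p-q}{h_\rho}\right),
\]
which in particular follows from the stronger pointwise bound \eqref{eq:CFL general}. The role of the cutoff \eqref{eq:cut-off H} together with the structural bound $p \le \|m_0\|_\infty$ is precisely to reduce this pointwise check to the single constant condition \eqref{eq:CFL}: using $H'(s) \le \alpha\|(m_0)_\rho\|_\infty^{\alpha-1}$ and $p \le \|m_0\|_\infty$, the left-hand side is at most $\tfrac12$. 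This already indicates that items (1) and (2) cannot be proved in isolation and motivates a joint induction.

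First I would verify (2) directly, independently of monotonicity. Since $H \ge 0$ one has $G(p,q) \le p$ unconditionally, and $G(p,q) \ge 0$ whenever $p \ge 0$. Starting from $M_j^0 = m_0(h_\rho j) \in [0,\|m_0\|_\infty]$ and the boundary value $M_0^n = 0$, these two inequalities propagate by induction on $n$ to all $n,j$, yielding (2) and simultaneously the non-negativity part of (3a). With $M_j^n \in [0,\|m_0\|_\infty]$ in hand, the discussion above confirms that $G$ is non-decreasing in each argument at the pairs $(p,q) = (M_j^n, M_{j-1}^n)$ actually used by the scheme, which is the content of (1).

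For the monotonicity in $j$ in (3a), I would run a second induction on $n$. The base case $n=0$ is the hypothesis on $m_0$. For the inductive step, assuming $M_j^n \le M_{j+1}^n$ for all $j \ge 0$, the monotonicity of $G$ from (1) gives
\[
M_j^{n+1} = G(M_j^n, M_{j-1}^n) \le G(M_{j+1}^n, M_j^n) = M_{j+1}^{n+1} \qquad (j \ge 1),
\]
while $M_0^{n+1} = 0 \le M_1^{n+1}$ holds from the boundary condition and non-negativity. Finally, for (3b) I would observe that, for each fixed $n$, the sequence $(M_j^n)_{j\ge 0}$ is non-decreasing and bounded above by $\|m_0\|_\infty$, hence converges to some $M_\infty^n$; telescoping of non-negative increments then forces $M_j^n - M_{j-1}^n \to 0$ as $j \to \infty$, and the continuity of $H$ with $H(0)=0$ lets one pass to the limit in the recursion to conclude $M_\infty^{n+1} = M_\infty^n$. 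The only mildly subtle point is the joint induction between (1) and (2); everything else is a routine comparison or passage to the limit.
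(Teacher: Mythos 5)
Your proof is correct and follows essentially the same route as the paper: monotonicity of $G$ in each argument under \eqref{eq:CFL} with the cut-off \eqref{eq:cut-off H}, the unconditional bound $G(p,q)\le p$ for the maximum principle, induction in $n$ for the spatial monotonicity, and $M_j^n-M_{j-1}^n\to 0$ together with $H(0)=0$ for mass conservation. Your only departure is to establish $0\le M_j^n\le \|m_0\|_\infty$ before invoking monotonicity of $G$, which merely makes explicit the small bootstrap the paper leaves implicit (since \eqref{eq:CFL} controls $\partial G/\partial p$ only for $p\le\|m_0\|_\infty$).
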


\begin{proof}
	\begin{enumerate}
		\item We have shown this above.
		\item This is true for $M_j^0$ by construction, and hence for all $n$, due to the previous item.
		\item
		\begin{enumerate}
			\item We proceed by induction in $n$. For time $n = 0$ this is true due to the monotonicity of $m_0$. Assume $M_j^n \le M_{j+1}^n$ for all $j$. Since $G$ is monotone in each coordinate
			\begin{equation*}
			M_{j+1}^{n+1} = G(M_{j+1}^n, M_j^n) \ge G(M_{j}^n, M_j^n) \ge G (M_{j}^n, M_{j-1}^n) = M_j^{n+1}.
			\end{equation*}
			\item Since the sequence is non-decreasing and bounded, it has a limit. Furthermore $\lim_{j} (M_j^n - M_{j-1}^n) = 0$. Hence, since $H_\delta(0) = 0$ we have that
			\begin{equation*}
			M_\infty^{n+1} = \lim _{j \to +\infty} M^{n+1}_j =  \lim _{j \to +\infty}  \frac{ M_j^n } { 1 +  h_t   H \left(\dfrac {M_j^n - M_{j-1}^n}{h_\rho} \right)  }  = {M_{\infty}^n}. \qedhere
			\end{equation*}
		\end{enumerate}
	\end{enumerate}
	
\end{proof}

	Notice the biggest advantage of the method \eqref{eq:method} is that it preserves the space monotonicity of $m$ and the total mass, as it should be for a mass equation.

\subsection{Convergence of the numerical scheme (\ref{eq:method}) to the viscosity solution}
\label{sec:estimate numerical derivative}

In order to construct a convergent scheme, it is better to work with a single parameter.
For $h > 0$ we define
\begin{equation*}
h_\rho = h, \qquad h_t =  \frac h {2 \alpha  \| (m_0)_\rho \|_\infty^{\alpha - 1} \| m_0 \|_\infty }.
\end{equation*}
so that \eqref{eq:CFL} is satisfied.
We now allow $M_j^n$ to be constructed from \eqref{eq:method}.
For $t_n \le t < t_{n+1}$ and $\rho_j \le \rho < \rho_{j+1}$ we write
the piecewise linear interpolation of the discrete values as
\normalcolor
\begin{equation}
\label{eq:interpolation M}
m^h (t, \rho) = \begin{dcases}
M_j^n + (\rho - \rho_j) \frac{M_{j+1}^n - M_j^n}{h_\rho} + (t - t_n) \frac{M_{j}^{n+1} - M_j^n}{h_t} & \text{if } \rho \le  t \\
M_{j+1}^{n+1} - (\rho_{j+1} - \rho ) \frac{M_{j+1}^{n+1} - M_j^{n+1} }{h_\rho} - ( t_{n+1} - t) \frac{M_{j+1}^{n+1} - M_{j+1}^n}{h_t} & \text{if } \rho > t
\end{dcases}
\end{equation}
This construction ensures that
\begin{equation*}
	\frac{\partial m^h}{\partial \rho} =  \begin{dcases}
 	U_{j+1}^n & \text{if } \rho <  t \\
	U_{j+1}^{n+1}  & \text{if } \rho > t
	\end{dcases}, \qquad \frac{\partial m^h}{\partial t} =  \begin{dcases}
	 - H\left(  U_j^n  \right) M_{j}^{n+1} & \text{if } \rho <  t \\
	 - H\left(  U_{j+1}^n  \right) M_{j+1}^{n+1}  & \text{if } \rho > t
	\end{dcases}
\end{equation*}
where $U_j^n$ is the numerical space derivative
\begin{equation}
\label{eq:U}
U_j^n = \frac{M_j^n - M_{j-1}^n}{h_\rho} \ge 0,
\end{equation}
and the numerical time derivative is given by the relation
\begin{equation*}
\frac{M_{j}^{n+1} - M_j^n}{h_t} = - H\left(  U_j^n  \right) M_{j}^{n+1}  \le 0.
\end{equation*}
The strategy of the proof is the following. We will show that these space and time  numerical derivatives are uniformly bounded, and hence $m^h$ is uniformly continuous, non-decreasing in $\rho$ and non-increasing in $t$. We can then apply the Ascoli-Arzelá precompactness theorem and show there exists a convergent subsequence. We will prove the limit is the viscosity solution.

If we subtract \eqref{eq:method} for $j$ and $j-1$ we recover an equation for the numerical derivative $U_j^n$
\begin{equation}
\label{eq:U equation}
\frac{ U_j^{n+1} - U_j^n}{h_t} + \frac{H(U_j^{n})M_j^{n+1} - H( U_{j-1}^{n} )M_{j-1}^{n+1}  }{h_\rho} = 0.
\end{equation}
Notice that the natural scaling for this equation is $h_t / h_\rho$.

\subsubsection{Boundedness of the numerical derivative}
Since \eqref{eq:U equation} is a numerical approximation by a monotone method of the nonlinear conservation law \eqref{eq:main equation}, we can expect a maximum principle.
\begin{lemma}
	\label{lem:boundedness of numerical derivative}
	Let $0 \le m_0$ be uniformly Lipschitz continuous, bounded and non-decreasing, $M_j^n$ be given by \eqref{eq:method}, that \eqref{eq:CFL} holds and let $U_j^n$ given by \eqref{eq:U}. Then, $U_j^n \ge 0$ and
	\begin{equation}
	\sup_j U_j^{n+1} \le \sup_j U_j^n \qquad \forall n \ge 0.
	\end{equation}
\end{lemma}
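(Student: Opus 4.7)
The plan is to prove the claim by induction on $n$, using an explicit algebraic rearrangement of the scheme together with the CFL condition~\eqref{eq:CFL}. Nonnegativity $U_j^n \ge 0$ is immediate: by Lemma~\ref{lem:properties of monotone methods}(3a), the monotonicity of $m_0$ in $j$ is preserved by \eqref{eq:method}, so $M_j^n \ge M_{j-1}^n$ for all $n$. The base case $\sup_j U_j^0 \le \|(m_0)_\rho\|_\infty$ is the Lipschitz continuity of $m_0$.

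For the inductive step, I would first rearrange \eqref{eq:U equation} into a form that isolates $U_j^{n+1}$. Substituting the telescoping identity $M_j^{n+1} = M_{j-1}^{n+1} + h_\rho U_j^{n+1}$ into the flux term and solving yields
\begin{equation*}
U_j^{n+1}\bigl(1 + h_t H(U_j^n)\bigr) \;=\; U_j^n \;+\; \frac{h_t}{h_\rho}\bigl(H(U_{j-1}^n) - H(U_j^n)\bigr)\, M_{j-1}^{n+1}.
\end{equation*}
Since $1 + h_t H(U_j^n) \ge 1$, it suffices to bound the right-hand side above by $\bar U_n := \sup_k U_k^n$.

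A case analysis on the sign of $U_j^n - U_{j-1}^n$ closes the argument. If $U_j^n \ge U_{j-1}^n$, monotonicity of $H$ makes the correction term nonpositive, so the right-hand side is at most $U_j^n \le \bar U_n$. If instead $U_j^n < U_{j-1}^n$, the mean value theorem together with the cut-off~\eqref{eq:cut-off H} and the inductive hypothesis $\bar U_n \le \|(m_0)_\rho\|_\infty$ give the Lipschitz estimate
\begin{equation*}
H(U_{j-1}^n) - H(U_j^n) \;\le\; \alpha\, \|(m_0)_\rho\|_\infty^{\alpha-1}\bigl(U_{j-1}^n - U_j^n\bigr).
\end{equation*}
Combined with $M_{j-1}^{n+1} \le \|m_0\|_\infty$ from Lemma~\ref{lem:properties of monotone methods}(2) and the CFL bound \eqref{eq:CFL}, the prefactor satisfies $(h_t/h_\rho)\, \alpha \|(m_0)_\rho\|_\infty^{\alpha-1} \|m_0\|_\infty \le \tfrac12$; hence the right-hand side is at most $U_j^n + \tfrac12(U_{j-1}^n - U_j^n) = \tfrac12(U_j^n + U_{j-1}^n) \le \bar U_n$, as desired. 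Taking the supremum over $j$ yields $\sup_j U_j^{n+1} \le \bar U_n$.

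The main obstacle is the second case: one needs the Lipschitz constant of $H$ on the relevant range of $U_j^n$ to be compatible with \eqref{eq:CFL}. This is precisely the role played by the cut-off in~\eqref{eq:cut-off H}---once it is in force, the estimate goes through mechanically, and the induction also propagates the auxiliary bound $\sup_j U_j^{n+1} \le \|(m_0)_\rho\|_\infty$ that is needed to apply the Lipschitz control of $H$ at the next step.
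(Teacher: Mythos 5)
Your proof is correct and follows essentially the same route as the paper: both rearrange the discrete equation \eqref{eq:U equation} for $U_j^n$, use the mean value theorem/Lipschitz bound on $H$ together with $M_j^{n+1}\le \|m_0\|_\infty$ and \eqref{eq:CFL} to make the coefficient of $U_{j-1}^n-U_j^n$ at most $\tfrac12$, and conclude $\sup_j U_j^{n+1}\le\sup_j U_j^n$. Your case analysis plus induction is merely a repackaging of the paper's convex-combination argument (and your induction, which keeps $U_j^n\le\|(m_0)_\rho\|_\infty$, incidentally confirms the paper's remark that the cut-off \eqref{eq:cut-off H} is not really needed once the lemma is in place).
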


\begin{remark}
	Once this is proven, the cut-off \eqref{eq:cut-off H} is not needed.
\end{remark}

\begin{proof} That $U_j^n \ge 0$ follows form \Cref{lem:properties of monotone methods}. We write
	\begin{align*}
	0&=\frac{ U_j^{n+1} - U_j^n}{h_t} + M_{j-1}^{n+1} \frac{H(U_j^{n}) - H( U_{j-1}^{n} )  }{h_\rho} +  H( U_{j}^{n} ) \frac{M_{j}^n - M_{j-1}^n}{h_\rho} \\
	&=\frac{ U_j^{n+1} - U_j^n}{h_t} + M_{j-1}^{n+1} \frac{H(U_j^{n}) - H( U_{j-1}^{n} )  }{h_\rho} +  H( U_{j}^{n} ) U_j^n.
	\end{align*}
	Solving for $U_j^{n+1}$, using the fact that $H$ is non-decreasing and $U_j^n \ge 0$, we have that
	\begin{align*}
	U_j^{n+1} &\le U_j^n -	\frac{h_t}{h_\rho} M_{j-1}^{n+1} (H(U_j^{n}) - H( U_{j-1}^{n} )  ) \\
	& = U_j^n -\frac{ h_t }{h_\rho}  M_j^{n+1}  H'(\xi_j^n ) U_j^n + \frac{ h_t }{h_\rho}  M_j^{n+1}   H' (\xi_j^n )  U_{j-1}^{n} \\
	&= \left( 1 -  \frac{ h_t }{h_\rho}  M_j^{n+1}  H'(\xi_j^n ) \right) U_j^n  + \frac{ h_t }{h_\rho}  M_j^{n+1}   H' (\xi_j^n )  U_{j-1}^{n} .
	\end{align*}
	Due to \eqref{eq:CFL} we have that the coefficients in front of $U_j^n$ and $U^{n}_{j-1}$ are non-negative. Hence
	\begin{align*}
	U_j^{n+1} &\le \left( 1 -  \frac{ h_t }{h_\rho}  M_j^{n+1}  H'(\xi_j^n ) \right) \sup_j U_j^n  + \frac{ h_t }{h_\rho}  M_j^{n+1}   H' (\xi_j^n )  \sup_j U_j^n  \\
	&=   \sup_j U_j^n .
	\end{align*}
	And this holds for every $j$ so the result is proved.
\end{proof}

\subsubsection{Convergence via Ascoli-Arzelá. Existence of a viscosity solution}

\begin{theorem}
	\label{thm:alpha > 1 existence of m}
	Let us $m_0 \in W^{1,\infty} (0,+\infty)$ and non-decreasing, \eqref{eq:CFL}, $M_j^n$ constructed by \eqref{eq:method} and $m^h$ be given by \eqref{eq:interpolation M}. Then, $m^{h}$ is a family of uniformly continuous functions. Then, for every $P > 0$
	\begin{equation}
	m^{h} \to m \qquad \text{ in } \mathcal C ([0,P] \times [0,T]) \text{ as  } h_\rho \to 0.
	\end{equation}
	where $m$ is a viscosity solution of \eqref{eq:mass equation}. Furthermore, \eqref{eq:estimates on solution} holds.
\end{theorem}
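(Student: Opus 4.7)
The plan is to follow the classical Crandall--Lions programme for monotone schemes: establish uniform equicontinuity of the interpolants $m^h$, extract a locally uniform limit via Ascoli--Arzel\`a, and then verify that the limit is a viscosity solution by a standard ``test function at a maximum'' argument that uses the monotonicity of the update $G$ and the consistency of the scheme. Uniqueness coming from \Cref{thm:comparison principle m} will promote subsequential convergence to convergence of the full family.

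First I would collect the a priori bounds already available. \Cref{lem:properties of monotone methods} gives $0 \le M_j^n \le \|m_0\|_\infty$ and monotonicity in $j$, while \Cref{lem:boundedness of numerical derivative} gives the key estimate
\begin{equation*}
0 \le U_j^n \le \sup_j U_j^0 \le \|(m_0)_\rho\|_\infty \eqqcolon L,
\end{equation*}
which, via the definition \eqref{eq:interpolation M}, yields $0 \le \partial_\rho m^h \le L$ a.e. Combining this with the update formula,
\begin{equation*}
\left|\frac{M_j^{n+1}-M_j^n}{h_t}\right| = H(U_j^n)M_j^{n+1} \le L^\alpha \|m_0\|_\infty,
\end{equation*}
so $\partial_t m^h$ is uniformly bounded as well. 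Hence $\{m^h\}_h$ is uniformly Lipschitz on $[0,T]\times[0,P]$, and $0\le m^h \le \|m_0\|_\infty$. The Ascoli--Arzel\`a theorem then extracts a subsequence converging locally uniformly to some $m \in \BUC([0,T]\times[0,P])$ inheriting $0\le m_\rho \le L$ and $\|m(t,\cdot)\|_\infty \le \|m_0\|_\infty$. Using the mass-conservation statement of \Cref{lem:properties of monotone methods}(3b) at the level of the interpolant gives $\lim_{\rho\to\infty} m(t,\rho) = \|m_0\|_\infty$, yielding \eqref{eq:estimates on solution}.

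The substantive step is to show that any such limit $m$ is a viscosity solution. I verify the subsolution property; the supersolution case is symmetric. Take $\varphi \in \mathcal C^1$ and a strict local maximum $(t_0,\rho_0)$ of $m-\varphi$ with $t_0,\rho_0>0$. By uniform convergence, $m^h-\varphi$ attains a local maximum at some $(t^h,\rho^h)\to(t_0,\rho_0)$; after shifting to a nearby grid node, we may assume this maximum occurs at a node $(t_{n_h},\rho_{j_h})$, and the maximum property propagates to adjacent nodes via \eqref{eq:interpolation M}. Consequently
\begin{align*}
M_{j_h}^{n_h+1}-M_{j_h}^{n_h} &\ge \varphi(t_{n_h+1},\rho_{j_h})-\varphi(t_{n_h},\rho_{j_h}),\\
M_{j_h}^{n_h}-M_{j_h-1}^{n_h} &\le \varphi(t_{n_h},\rho_{j_h})-\varphi(t_{n_h},\rho_{j_h-1}).
\end{align*}
Dividing by $h_t$ and $h_\rho$ respectively, the scheme identity \eqref{eq:method interior} rewritten as $(M_j^{n+1}-M_j^n)/h_t + H(U_j^n)M_j^{n+1}=0$ with $H$ non-decreasing gives
\begin{equation*}
\frac{\varphi(t_{n_h+1},\rho_{j_h})-\varphi(t_{n_h},\rho_{j_h})}{h_t} + H\!\left(\frac{\varphi(t_{n_h},\rho_{j_h})-\varphi(t_{n_h},\rho_{j_h-1})}{h_\rho}\right) M_{j_h}^{n_h+1} \le 0.
\end{equation*}
Passing to the limit $h\to 0$ and using the consistency of the backward difference with $\varphi_\rho(t_0,\rho_0)$ and of the forward time difference with $\varphi_t(t_0,\rho_0)$, together with $M_{j_h}^{n_h+1}\to m(t_0,\rho_0)$, yields $\varphi_t(t_0,\rho_0) + \bigl(\varphi_\rho(t_0,\rho_0)\bigr)_+^{\alpha} m(t_0,\rho_0) \le 0$, which is precisely \eqref{eq:viscosity subsolution}. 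The boundary data are handled because $M_0^n=0$ and $M_j^0=m_0(h_\rho j)$ pass to the limit pointwise by uniform continuity of $m_0$. Finally, since \Cref{thm:comparison principle m} forces uniqueness of viscosity solutions with the prescribed data, every convergent subsequence has the same limit, so the whole family $m^h$ converges in $\mathcal C([0,P]\times[0,T])$.

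The main obstacle I anticipate is the bookkeeping at the corner $p=0$ of the Hamiltonian $(p)_+^\alpha$ (which is only $\mathcal C^1$ for $\alpha>1$ and merely continuous for $\alpha=1$) and the need to localise the argument at a node of the grid: strict maxima arguments and standard perturbation $\varphi\leadsto\varphi+\varepsilon(|t-t_0|^2+|\rho-\rho_0|^2)$ handle the first, while the interpolation formula \eqref{eq:interpolation M} and the fact that max points of $m^h-\varphi$ may always be shifted to a node (up to an $O(h)$ perturbation absorbed by consistency) handle the second. The remaining delicate point is justifying that the extremal node $(t_{n_h},\rho_{j_h})$ stays in the interior $n_h,j_h\ge 1$ for small $h$, which follows from $t_0,\rho_0>0$ and uniform convergence.
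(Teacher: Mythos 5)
Your overall architecture is the same as the paper's: uniform bounds on the discrete space and time derivatives from \Cref{lem:properties of monotone methods} and \Cref{lem:boundedness of numerical derivative}, Ascoli--Arzel\`a compactness for the interpolants \eqref{eq:interpolation M}, a Crandall--Lions/Barles--Souganidis verification at a maximum point of $m^h-\varphi$, and identification of the limit. However, the central verification step fails as written. Your two displayed inequalities cannot be extracted from a single maximum point: a maximum of $M_j^n-\varphi(t_n,\rho_j)$ at the node $(t_{n_h+1},\rho_{j_h})$ gives the first one, whereas the second would require the maximum to sit at $(t_{n_h},\rho_{j_h-1})$; worse, the second is stated in the direction that is useless for the argument. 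From the scheme $(M_j^{n+1}-M_j^n)/h_t=-H(U_j^n)M_j^{n+1}$ and your first inequality you obtain $\bigl(\varphi(t_{n_h+1},\rho_{j_h})-\varphi(t_{n_h},\rho_{j_h})\bigr)/h_t+H(U_{j_h}^{n_h})M_{j_h}^{n_h+1}\le 0$; to replace $H(U_{j_h}^{n_h})$ by $H$ of the $\varphi$-difference quotient while preserving ``$\le 0$'' you need $U_{j_h}^{n_h}\ge\bigl(\varphi(t_{n_h},\rho_{j_h})-\varphi(t_{n_h},\rho_{j_h-1})\bigr)/h_\rho$, which is the opposite of what you wrote and is not available from the maximum at the node where the first inequality holds. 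So the deduction ``with $H$ non-decreasing gives\dots'' does not follow.

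The repair is exactly what your opening plan announces but the execution drops: use the monotonicity of the full update map $G$ rather than of $H$ acting on separately compared difference quotients. With $\xi$ the maximal value of $m^h-\varphi$, bound $M_{j_h}^{n_h}\le\varphi(t_{n_h},\rho_{j_h})+\xi$ and $M_{j_h-1}^{n_h}\le\varphi(t_{n_h},\rho_{j_h-1})+\xi$, insert both into $G$, and use that at the maximizing point the value of $M$ (or of $m^h$) equals, or is bounded below by, $\varphi+\xi$; unravelling the explicit formula for $G$ then produces the discrete subsolution inequality with $\varphi$-differences multiplying $m^h(t_h,\rho_h)$, and consistency of the differences yields \eqref{eq:viscosity subsolution} in the limit. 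This is the paper's proof. Note also that the paper does not ``shift the maximum to a grid node'' (a step you leave unjustified): it keeps the continuum maximizer $(t_h,\rho_h)$ and exploits that $m^h$ is non-decreasing in $\rho$ and non-increasing in $t$ to compare $m^h(t_h,\rho_h)$ with $M_{j_h}^{n_h+1}$, paying for the mismatch in the time increment with an extra term controlled by $\varphi_t(t_0,\rho_0)\le 0$. Your a priori estimates are correct, and your use of \Cref{thm:comparison principle m} to upgrade subsequential convergence to convergence of the whole family is fine and slightly cleaner than the paper's phrasing; the identification of the limit of $m^h$ at $\rho\to\infty$ in \eqref{eq:estimates on solution} deserves a word of care since convergence is only locally uniform, but this matches the paper's level of detail.
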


\begin{proof}
	First, we notice that $m^h$ satisfies \eqref{eq:estimates on solution}. Due to \eqref{eq:interpolation M}, we have that
	\begin{equation*}
	|m_\rho^h (t,\rho)| \le \| (m_0)_\rho \|_\infty, \qquad |m_t^h (t,\rho)| \le |H(U_j^n) M_j^{n+1}| \le  H( \| (m_0)_\rho \|_\infty) \| m_0 \|_\infty.
	\end{equation*}
	By the Ascoli-Arzelá theorem there is a subsequence that converges uniformly in $[0,P] \times[0,T]$.

	We will show every convergent subsequence converges to the same limit $m$, and hence the whole sequence converges. We still denote by $h$ the indices of the convergent subsequences.
	
	Let $m^h$ be a subsequence converging in $[0, T] \times [0, P]$.
	We check that it is a viscosity subsolution, and the proof of viscosity supersolution is analogous. Let
	$(t_0, \rho_0) \in (0, T) \times (0, P)$ and
	$\varphi \in C^2$ be
	 such that $m - \varphi$ has a strict local maximum at $(t_0,\rho_0)$ and $m(t_0, \rho_0) = \varphi(t_0, \rho_0)$.
	We can modify $\varphi$ outside a bounded neighbourhood of $(t_0, \rho_0)$, so that $m -\varphi$ attains a unique global maximum at $(t_0, \rho_0)$,
	for $h$ large enough $m^h - \varphi$ attains a global maximums in $[0, T] \times [0, P]$ at an interior points $(t_h, \rho_h)$, and $(t_h, \rho_h) \to (t_0, \rho_0)$ as $h \to 0$. Our argument is a variation of \cite[Lemma 1.8]{Tran2019}.
	
	 Let $B \subset [0, T] \times [0, P]$ be a small open ball around $(t_0, \rho_0)$ where the maximum is global. Let $\ee = - \inf_B (m - \varphi) / 2$. Define $U = \{ m - \varphi > - \ee \} \cap B$ which is a open and bounded neighbourhood of $(t_0, \rho_0)$. We modify $\varphi$ so that is greater than $m + \varepsilon$ outside $U$. With the modification, $m-\varphi$ attains a unique global maximum at $(t_0, \rho_0)$.
	
	Let $h$ be small enough so that $|m^h - m| < \frac {\ee}{2}$ in $[0, T] \times [0, P]$. We have that
	\begin{equation*}
		\max_{[0,t_0 + 1] \times [0,\rho_0+1] \setminus U } (m^h - \varphi) < \max_{[0,t_0 + 1] \times [0,\rho_0+1] \setminus U } (m - \varphi) + \frac \ee 2 \le  - \frac \ee 2.
	\end{equation*}
	On the other hand
	$$m^h(t_0, \rho_0) - \varphi(t_0, \rho_0) >   m(t_0, \rho_0) - \varphi(t_0, \rho_0) - \frac {\ee } {2} = - \frac{\ee}{2}.$$
	Therefore the maximum over $[0, T] \times [0, P]$ is attained at some $(t_h, \rho_h) \in U$. The sequence $(t_h, \rho_h)$ is bounded, and therefore as a convergent subsequence. Let $(t_1, h_1)$ be its limit. We have that
	\begin{equation*}
		m^h(t_h, \rho_h) - \varphi(t_h, \rho_h) \ge m^h(t, \rho) - \varphi(t, \rho) \qquad  \forall (t, \rho) \in [0, T] \times [0, P].
	\end{equation*}
	Passing to the limit, since the maximum is unique, we have that $(t_1, \rho_1) = (t_0, \rho_0)$. Since all convergent subsequences share a limit, the whole sequence converges.
	
	For such small values of $h$,
	let us define
	\begin{equation*}
	{n_h} =  \left \lfloor  \frac{t_h}{h_t} \right \rfloor -1 , \qquad {j_h} = \left \lceil  \frac{\rho_h}{h_\rho} \right \rceil .
	\end{equation*}
	Since $m^h - \varphi $ has a global maximum in $[0,T] \times [0,P]$, we have that
	\begin{equation*}
	m^h (t_h , \rho_h) - \varphi(t_h, \rho_h) \ge m^h (t , \rho) - \varphi(t , \rho ).
	\end{equation*}
	Evaluating on the points of the mesh, we get that
	\begin{equation*}
	M_j^n \le \varphi (t_n, \rho_j) - \varphi (t_h , \rho_h) + m^h (t_h , \rho_h).
	\end{equation*}
	Since $m^h$ is increasing in $\rho$ and decreasing in $t$ and the fact that $G$ is non-decreasing, we recover
	\begin{align*}
	m^h (t_h , \rho_h) & \le  m^h((n_h+1) h_t, j_h h_\rho ) = M_{j_h  }^{n_h + 1}     =  G( M_{j_h }^{n_h},M_{j_h - 1}^{n_h} )   \\
	& \le G\Big ( \varphi (t_{n_h}, \rho_{j_h}) -  \varphi (t_h , \rho_h) + m^h (t_h , \rho_h) ,    \varphi (t_{n_h}, \rho_{j_h - 1}) -  \varphi (t_h , \rho_h) + m^h (t_h , \rho_h) \Big  ) \\
	&= \frac{\varphi (t_{n_h}, \rho_{j_h}) -  \varphi (t_h , \rho_h) + m^h (t_h , \rho_h) }
	{1 + h_t H \left(  \frac{\varphi (t_{n_h}, \rho_{j_h}) - \varphi (t_{n_h}, \rho_{j_h - 1})} {h_\rho}     \right) }
	\end{align*}
	due to the definition of $G$. Since $\varphi$ is smooth, for $h$ small enough the denominator is positive and hence
	\begin{align*}
	\frac{\varphi (t_h , \rho_h) - \varphi (t_{n_h}, \rho_{j_h})  }{h_t } + H\left(  \frac{ \varphi (t_{n_h}, \rho_{j_h})  -  \varphi (t_{n_h}, \rho_{j_h-1})  }{h_\rho }   \right) m^h (t_h, \rho_h) \le 0 .
	\end{align*}
	Adding and subtracting $\varphi (t_{n_h + 1}, \rho_{j} ) / h_\rho $ on both sides
	\begin{align*}
	\frac{\varphi (t_{n_h+1} , \rho_{j_h}) - \varphi (t_{n_h}, \rho_{j_h})  }{h_t } &+ H\left(  \frac{ \varphi (t_{n_h}, \rho_{j_h})  -  \varphi (t_{n_h}, \rho_{j_h-1})  }{h_\rho }   \right) m^h (t_h, \rho_h) \\
	&\le  \frac{ \varphi (t_{n_h+1} , \rho_{j_h}) -   \varphi (t_h , \rho_h) }{t_h - t_{n_h}} \frac {t_h - t_{n_h}} {h_t}  .
	\end{align*}
	Clearly $t_h - t_{n_h} \ge 0$ and, since $\varphi$ is of class $C^1$ and $m$ is non-increasing in $t$, we have that
	\begin{equation*}
	\lim_{h \to 0} \frac{ \varphi (t_{n_h+1} , \rho_{j_h}) -   \varphi (t_h , \rho_h) }{t_h - t_{n_h}} = \varphi_t (t_0, \rho_0) \le 0,
	\end{equation*}
	Therefore, as $h\to 0$, we conclude that
	\begin{equation*}
	\varphi_t (t_0, \rho_0) + H\left(  \varphi_\rho (t_0,\rho_0)   \right) m (t_0, \rho_0) \\
	\le 0 ,
	\end{equation*}
	for any $\varphi$ such that $m - \varphi $ has a global maximum at $(t_0, \rho_0)$. Therefore, $m$ is a viscosity subsolution.
\end{proof}

\subsection{Rate of convergence}

\begin{theorem}
	\label{thm:convergence M to m}
	Let $\alpha \ge 1$ and let $h_t$ and $h_\rho$ satisfy \eqref{eq:CFL}. Let $m_0$ be Lipschitz continuous and bounded and let $m$ be the viscosity solution of \eqref{eq:mass equation} and $M_j^n$ be constructed by \eqref{eq:method}. Then, for any $T > 0$
	\begin{equation*}
	\sup_{ \substack{ j \ge 0 \\  0 \le n \le T/h_t} } | m  (t_n,\rho_j) - M_j^n | \le C h_\rho^{\frac {1}{3}}.
	\end{equation*}	
	where $C$ does not depend on $h_\rho$.
\end{theorem}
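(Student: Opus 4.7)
The plan is to adapt the classical Crandall--Lions doubling-of-variables argument from \cite{Crandall1984} to our setting, where the nonlinearity is $H(m_\rho)\,m$ rather than just $H(m_\rho)$. First I would extend $M_j^n$ to a function $\widetilde m^h \in \mathcal{C}([0,T]\times[0,+\infty))$ (for instance by piecewise-constant or piecewise-linear interpolation as in \eqref{eq:interpolation M}) and record that, by \Cref{lem:properties of monotone methods} and \Cref{lem:boundedness of numerical derivative}, $\widetilde m^h$ is uniformly Lipschitz in $\rho$ with constant $L_0 := \|(m_0)_\rho\|_\infty$ and satisfies $\|\widetilde m^h\|_\infty \le \|m_0\|_\infty$. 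By the estimates \eqref{eq:estimates on solution}, $m$ has the same bounds. This uniform Lipschitz regularity in $\rho$ translates, via the equation, into Lipschitz regularity in $t$ with constant $L_1 := L_0^\alpha \|m_0\|_\infty$, both for $m$ and for $\widetilde m^h$ in a suitable discrete sense.

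Next I would introduce the penalized test function
\begin{equation*}
\Phi(t,s,\rho,\sigma) \;=\; m(t,\rho) - \widetilde m^h(s,\sigma) - \frac{(t-s)^2}{2\ee} - \frac{(\rho-\sigma)^2}{2\ee} - \lambda(t+s) - \gamma(\rho^2+\sigma^2),
\end{equation*}
on $[0,T]^2 \times [0,R]^2$, with $R$ large, and let $(\bar t,\bar s,\bar\rho,\bar\sigma)$ be a maximum point. Standard arguments give $|\bar t - \bar s| + |\bar\rho - \bar\sigma| \le C\sqrt\ee$ and push the maximum into the interior for $\gamma,\lambda$ small. At this point the frozen functions $\varphi_1(t,\rho) = \widetilde m^h(\bar s,\bar\sigma) + (\cdots) + \lambda t + \gamma\rho^2$ and $\varphi_2(s,\sigma) = m(\bar t,\bar\rho) - (\cdots) - \lambda s - \gamma\sigma^2$ touch $m$ from above and $\widetilde m^h$ from below respectively. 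Using that $m$ is a viscosity subsolution gives an inequality with $(\partial_t\varphi_1, \partial_\rho\varphi_1)$, while the scheme \eqref{eq:method} together with the monotonicity of $G$ and a discrete comparison with $\varphi_2$ (exactly as in the existence argument in \Cref{thm:alpha > 1 existence of m}) yields an approximate supersolution inequality for $\widetilde m^h$ with a consistency error $O(h_\rho)$ coming from Taylor expansion of $\varphi_2$ at the mesh points.

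Subtracting the two inequalities and using the factorization $H(p_1)m_1 - H(p_2)m_2 = H(p_1)(m_1 - m_2) + (H(p_1)-H(p_2))m_2$, together with the local Lipschitz bound $|H(p_1)-H(p_2)| \le \alpha L_0^{\alpha-1}|p_1-p_2|$ (valid thanks to the cutoff \eqref{eq:cut-off H}) and the bound $m_2 \le \|m_0\|_\infty$, one gets
\begin{equation*}
\lambda \;\le\; C\Bigl(\tfrac{|\bar\rho-\bar\sigma|}{\ee} + \sqrt\gamma\Bigr)\bigl(m(\bar t,\bar\rho) - \widetilde m^h(\bar s,\bar\sigma)\bigr) + C\,\frac{h_\rho}{\ee} + o(1)_{\gamma,\lambda\to 0}.
\end{equation*}
Setting $\mathcal{E}(h) := \sup(m - \widetilde m^h)$ over the mesh, one deduces an inequality of the form $\mathcal{E}(h) \le C(\ee + h_\rho/\ee + \mathcal{E}(h) \cdot (\ee^{-1/2}\cdot\sqrt\ee))$ after balancing the $\ee$, $h_\rho/\ee$ and the cross-term carrying $\mathcal{E}(h)$ itself. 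The optimal choice turns out to be $\ee \sim h_\rho^{2/3}$, which gives the desired $h_\rho^{1/3}$ rate; a symmetric argument (swapping the roles of $m$ and $\widetilde m^h$ in $\Phi$) yields the bound in the opposite direction.

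The main obstacle is the $m$-dependence of the Hamiltonian: unlike the plain $m_t + H(m_\rho) = 0$ case, the term $H(p_1)(m_1 - m_2)$ couples back to $\mathcal{E}(h)$ itself rather than being absorbed into the doubling penalty. This is what degrades the classical Crandall--Lions $h^{1/2}$ rate to $h^{1/3}$ and requires the careful balancing described above. A secondary technical point is ensuring that the discrete inequality obtained from the scheme really matches, up to $O(h_\rho)$, the supersolution inequality written for the smooth test function $\varphi_2$; here one uses the explicit monotone form $M_j^{n+1} = G(M_j^n,M_{j-1}^n)$, the CFL condition \eqref{eq:CFL}, and Taylor expansion to order two of $\varphi_2$ at $(t_{n_h},\rho_{j_h})$.
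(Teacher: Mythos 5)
Your overall strategy (doubling of variables with a quadratic penalization, viscosity inequality for $m$, monotonicity of $G$ for a discrete supersolution-type inequality, then subtraction) is the same as the paper's, but the way you close the argument has a genuine gap: the treatment of the $m$-dependent term. After subtracting the two inequalities you keep the coupling term $H(\cdot)\,(m-\widetilde m^h)$ and try to ``balance'' it, ending with an inequality of the schematic form $\mathcal E(h)\le C\bigl(\ee+h_\rho/\ee\bigr)+C\,\mathcal E(h)$. This cannot be closed: with your crude estimate $|\bar\rho-\bar\sigma|\le C\sqrt\ee$ the coefficient in front of $\mathcal E(h)$ is of order $H(\ee^{-1/2})$, which blows up as $\ee\to0$; even with the sharper bound $|\bar\rho-\bar\sigma|\le C L\ee$ (coming from the Lipschitz bound on $m$) the coefficient is of order $H(L)\,$, a fixed constant, so there is no smallness to absorb the term. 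The paper never balances this term at all: it is discarded by a sign argument. Choosing the linear penalization weight $\lambda$ proportional to $\sigma=\sup(m-M^h)$ guarantees that at the maximum point of the doubled functional one has $m(t_\ee,\rho_\ee)-M_{j_\ee}^{n_\ee}\ge\sigma/2>0$ (this is \eqref{eq:estimate maximum}), and since $H\ge0$ the term $H(\cdot)\bigl(M_{j_\ee}^{n_\ee}-m(t_\ee,\rho_\ee)\bigr)$ is non-positive and can simply be dropped from the upper bound. Your factorization $H(p_1)m_1-H(p_2)m_2=H(p_1)(m_1-m_2)+(H(p_1)-H(p_2))m_2$ is compatible with this, but you must use the sign of $m-\widetilde m^h$ at the maximum point rather than try to reabsorb the term.

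Relatedly, your diagnosis of where the exponent $1/3$ comes from is not the mechanism in the paper. The rate is not degraded by the coupling term (which disappears by sign); it comes from the fact that in this shorter argument both $\lambda$ and the doubling parameter $\ee$ must be taken proportional to $\sigma$ itself --- $\ee\lesssim\sigma$ is exactly what is needed in Step 2 to push the maximum away from $t=0$, $n=0$, using the Lipschitz continuity of $m$ and the exact matching $m(0,\rho_j)=M_j^0$ --- while the mesh consistency error, measured through the Lipschitz continuity of the cut-off Hamiltonian \eqref{eq:cut-off H}, is of size $h_\rho/\ee^2$ (the penalization is $|\rho-\xi|^2/\ee^2$, so shifting the grid point by $h_\rho$ moves the argument of $H$ by $2h_\rho/\ee^2$). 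Combining $\sigma\lesssim h_\rho/\ee^2$ with $\ee\sim\sigma$ gives $\sigma^3\lesssim h_\rho$, i.e.\ the $h_\rho^{1/3}$ rate; the classical $O(\sqrt{h})$ rate of Crandall--Lions requires a longer argument, as the paper remarks. So to repair your proof: choose $\lambda\sim\sigma$, verify the interior-maximum step quantitatively (this is where $\ee\lesssim\sigma$ enters), drop the coupling term by sign, and then the remaining inequality $\sigma\lesssim h_\rho/\ee^2$ with $\ee\sim\sigma$ yields the claim. Your extra localization term $\gamma(\rho^2+\sigma^2)$ is unnecessary (the paper handles behaviour at infinity directly from $\lim_{\rho\to\infty}m=\|m_0\|_\infty$), but it is not an error.
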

\begin{remark}
	The original paper by Crandall and Lions allows, by a longer and more involved argument, proves estimates of the form $O(\sqrt{h_t})$ with $H$ continuous, but requiring that the function defining the method is Lipschitz continuous.
\end{remark}

\begin{proof}
	For convenience, in the proof we denote
	$
	N = \lceil T / h_t \rceil .
	$
	Our aim is to prove that
	\begin{equation*}
	\sigma = \sup_{ \substack{ j \ge 0 \\  0 \le n \le T/h_t} } ( m (t_n,\rho_j) - M_j^n ) \le C h_\rho^{\frac 1 3}.
	\end{equation*}
	The argument can be analogously repeated for the infimum.
	If $\sigma \le 0$ there is nothing to prove. Assume that $\sigma > 0$. Let $L$ be the Lipschitz constant of $m_0$. Due to \eqref{eq:estimates on solution}, it is also the Lipschitz constant of $m$.
	
	\medskip
	
	We begin by indicating there exist $n_1,j_1$ such that
	\begin{equation*}
	m (t_1, \rho_1) - M_{j_1}^{n_1} \ge \frac {3\sigma} 4, \qquad \text{ where } t_1 = h_t {n_1} \text{ and } \rho = h_\rho {j_1}.
	\end{equation*}
	We define
	\begin{align*}
	\Phi(t,h_t n,\rho,h_\rho j) &= m(t,\rho) - M_j^n - \phi(t,h_t n,\rho,h_\rho j)
	\end{align*}
	where, for $\ee, \lambda > 0$ we define
	\begin{equation*}
	\phi(t,s,\rho,\xi) = \left( \frac{|\rho - \xi|^2 + |t-s|^2}{\ee^2}  + \lambda (t + s) \right)
	\end{equation*}
	Then the maximum is at $t_\ee \in [0,T] , \rho_\ee \in [0,+\infty)$ and $t_\ee = h_t n_\ee$ with $n_\ee \in \{0, \cdots, N\} , \xi_\ee = h_\rho j_\ee$ with $j_\ee \in \mathbb N \cup \{0\}$. 	Again this function is continuous and
	\begin{enumerate}
		\item Defined over a bounded set in $t_\ee$ and $s_\ee$.
		\item If $\rho \to +\infty$ and $j$ remains bounded then $\Phi \to -\infty$ (analogously in $\rho$ bounded and $j \to + \infty$).
		\item If $\rho \to + \infty$ and $j \to + \infty$ then
		$$	\limsup_{\rho,j\to + \infty} \Phi \le  m_\infty - m_\infty  \le 0, $$
	\end{enumerate}
	so there exists a point of maximum $(t_\ee,h_tn_\ee,\rho_\ee,h_\rho j_\ee)$ such that
	\begin{equation*}
	\Phi(t_\ee , s_\ee , \rho_\ee ,\xi_\ee) \ge \Phi(t,s,\rho, \xi) \qquad \forall (t,s,\rho,\xi).
	\end{equation*}
	In particular
	\begin{equation*}
	\Phi(t_\ee , s_\ee , \rho_\ee ,\xi_\ee) \ge \Phi(t_1,t_1,\rho_1, \rho_1) = m(t_{1},\rho_{1}) - M_{j_1}^{n_1}  - 2 \lambda t_1 .
	\end{equation*}
	Taking
	\begin{equation}
	\label{eq:condicion ee 1}
	\lambda = \frac{ \sigma }{ 8  (1 + T ) }
	\end{equation}
	we have
	\begin{equation*}
	\Phi(t_\ee , \rho_\ee,n_\ee, j_\ee) \ge \frac \sigma 2.
	\end{equation*}
	In particular,
	\begin{equation}
	\label{eq:estimate maximum}
		m(t_\ee,\rho_\ee) - M_{j_\ee}^{n_\ee} \ge \frac \sigma 2  + \phi(t_\ee ,h_t n_\ee,\rho_\ee,h_\rho j_\ee) > 0
	\end{equation}
	\noindent \textbf{Step 1. Variables collapse.} As $\Phi(t_\ee , s_\ee , \rho_\ee ,\xi_\ee) \ge \Phi(0,0,0,0) = 0$, we have
	\begin{equation*}
	\frac{|\rho_\ee-\xi_\ee|^2 + |t_\ee-s_\ee|^2}{\varepsilon^2} + \lambda (t_\ee + s_\ee) \le m(t_\ee, \rho_\ee) - M_{j_\ee}^{n_\ee} \le 2\| m_0 \|_\infty .
	\end{equation*}
	Therefore, we obtain
	\begin{equation*}
	|\rho_\ee - \xi_\ee|^2 + |t_\ee - s_\ee|^2 \le 2\| m_0 \|_\infty  \ee^2, \qquad \text{and} \qquad \rho_\ee^2 + \xi_\ee^2 \le \frac{{ 2\| m_0 \|_\infty } }{ \ee}.
	\end{equation*}
	This implies that, as $\varepsilon \to 0$, the variable doubling collapses to a single point.
	
	\noindent \textbf{Step 2. For $\varepsilon$ small enough, $t_\ee,\rho_\ee,n_\ee, j_\ee > 0$.} Since $m$ is Lipschitz continuous
	\begin{align*}
	\frac{\sigma}{2} &<  m(t_\ee, \rho_\ee) - M_{j_\ee}^{n_\ee} \\
	&=m(t_\ee, \rho_\ee) - m(0, \rho_\ee) + m(0, \rho_\ee) - m(0, \xi_\ee)\\
	&\qquad  + m(0, \xi_\ee)  - M_{j_\ee}^0  + M_{j_\ee}^0 - M_{j_\ee}^{n_\ee} \\
	&\le L t_\ee  + L |\rho_\ee - \xi_\ee| ,
	\end{align*}
	using the fact that $m(0,\xi_\ee) = M_{j_\ee}^0$ and $M_j^n$ is decreasing in $n$.
	If $\ee$ is small enough
	\begin{equation}  \label{eq:condicion ee 2}
	\ee < \frac{ \sigma } {4L \sqrt{ 2\| m_0 \|_\infty }},	
	\end{equation}
	we have $L t_\ee >  \sigma / 4$ and hence $t_\ee > 0$. Analogously for $\rho_\ee > 0$.
	
	If $n_\ee = 0$ then
	\begin{align*}
	\frac{\sigma}{2} &<  m(t_\ee, \rho_\ee) - M_{j_\ee}^{0} \\
	&=m(t_\ee, \rho_\ee) - m(0, \rho_\ee) + m(0,\rho_\ee) - m(0,\xi_\ee)  + m(0, \xi_\ee)  - M_{j_\ee}^0  \\
	&\le L t_\ee  + L |\rho_\ee - \xi_\ee| = L |t_\ee - n_\ee| + L |\rho_\ee - \xi_\ee| \\
	&\le L \sqrt{ 2 \| m _0 \|_\infty }\ee .
	\end{align*}
	This is a contradiction if \eqref{eq:condicion ee 2} holds. An analogous contradiction holds if $j_\ee = 0$.
	
	\medskip

	\noindent \textbf{Step 3. An inequality for $m$ via viscosity.} We check that
	\begin{equation*}
	(t,\rho) \longmapsto m(t, \rho) - \phi(t,s_\ee,\rho,\xi_\ee) = m(t, \rho) - \psi(t, \rho)
	\end{equation*}
	has a maximum at $(t_{\ee}, \rho_{\ee})$. Hence, since $m$ is a viscosity subsolution, we have
	\begin{equation*}
	\frac{\partial \phi}{\partial t} (t_\ee,s_\ee, \rho_\ee, \xi_\ee) + H \left( \frac{\partial \phi}{\partial \rho} (t_\ee,s_\ee, \rho_\ee, \xi_\ee) \right) m (t_\ee, \rho_\ee) \le 0.
	\end{equation*}
	Computing the derivatives
	\begin{equation}
	\label{eq:FD viscosity m}
	\frac{2(t_\ee -s_\ee)}{\ee^2} + \lambda  + H \left(  \frac{2 (\rho_\ee  - \xi_\ee) }{\ee^2}     \right) m (t_\ee, \rho_\ee) \le 0.
	\end{equation}

	\noindent \textbf{Step 4. An inequality for $M$ applying that $G$ is monotone.}  As before, the function
	\begin{equation*}
	(n,j) \longmapsto M_j^n - \left(- \phi(t_\ee,h_t n,\rho_\ee,h_\rho j) \right) = M_j^n - \psi(j,n)
	\end{equation*}
	has a minimum at $(n_\ee, j_\ee)$. In particular, we obtain
	\begin{equation*}
	M_j^n  \ge M_{j_\ee}^{n_\ee} - \psi(j_\ee, n_\ee)  + \psi(j, n).
	\end{equation*}
	Since $G$ is monotone, it yields
	\begin{align*}
	M_{j_\ee}^{n_\ee} &= G(M_{j_\ee}^{n_\ee-1},M_{j_\ee-1}^{n_\ee-1}) \\
	&\ge G\Bigg( \underbrace{
		M_{j_\ee}^{n_\ee} - \psi(j_\ee, n_\ee) + \psi(j_\ee, n_\ee -1 ) }_{S_1}, \underbrace{ M_{j_\ee}^{n_\ee} - \psi(j_\ee, n_\ee) + \psi(j_\ee - 1 , n_\ee -1 ) }_{S_2} \Bigg).
	\end{align*}
	Similarly to the proof of \Cref{thm:alpha > 1 existence of m}, for $h$ small, one can rewrite the previous inequality as
	\begin{equation*}
	\frac{M_{j_\ee}^{n_\ee} - S_1 } {h_t}  + H \left( \frac{S_1 - S_2}{h_x}  \right) M_{j_\ee}^{n_\ee} \ge  0.
	\end{equation*}
	Hence, we recover
	\begin{equation}
	\frac{\psi(j_\ee, n_\ee) - \psi(j_\ee, n_\ee -1 ) } {h_t}  + H \left( \frac{  \psi(j_\ee, n_\ee -1 ) - \psi(j_\ee - 1 , n_\ee -1 )}{h_x}  \right)  M_{j_\ee}^{n_\ee} \ge 0.
	\end{equation}
	We could compute this explicitly, but it is sufficient and clearer to apply the intermediate value theorem to deduce
	\begin{equation*}
	-\frac{\partial \phi}{\partial s} (t_\ee,\bar s_\ee, \rho_\ee,  \xi_\ee) + H \left( - \frac{\partial \phi}{\partial \xi} (t_\ee, { s}_\ee - h_t, \rho_\ee,  {\bar \xi}_\ee) \right) M_{j_\ee}^{n_\ee} \ge 0
	\end{equation*}
	where $\bar s_\ee \in (s_\ee - h_t, s_\ee)$ and $\bar \xi_\ee \in (\xi_\ee - h_\rho, \xi_\ee)$.
	Hence, we conclude that
	\begin{equation}
	\label{eq:FD viscosity m 2}
	\frac{2 (t_\ee - \bar s_\ee )}{\ee^2} - \lambda + H \left( \frac{2 (\rho_\ee-\bar \xi_\ee)}{ \ee^2}  \right) M_{j_\ee}^{n_\ee} \ge 0.
	\end{equation}
	
	\noindent \textbf{Step 5. An estimate for $\sigma$.} Substracting \eqref{eq:FD viscosity m} from \eqref{eq:FD viscosity m 2} we have that
	\begin{align*}
	\frac{ \sigma }{ 4 ( 1 + T) } &\le \frac{s_\ee - \bar s_\ee}{\ee^2}
	+ H \left( \frac{2 (   \rho_\ee - \bar \xi_\ee) }{ \ee^2}    \right) M_{j_\ee}^{n_\ee} - H  \left(  \frac{2 (\rho_\ee  - \xi_\ee) }{\ee^2}     \right) m (t_\ee, \rho_\ee)  \\
	& \le \left(  H \left( \frac{2 (   \rho_\ee - \bar \xi_\ee) }{ \ee^2}      \right) - H  \left(  \frac{2 (\rho_\ee  - \xi_\ee) }{\ee^2}     \right)  \right)M_{j_\ee}^{n_\ee} +  H \left( \frac{2 (   \rho_\ee - \bar \xi_\ee) }{ \ee^2}      \right) ( M_{j_\ee}^{n_\ee} - m(t_\ee, \rho_\ee))
	\end{align*}
	Notice that the second term is non-positive due to \eqref{eq:estimate maximum}. We now use the Lipschitz continuity of $H$, which holds for the cut-off given by \eqref{eq:cut-off H}, and we obtain that
	\begin{align*}
	\frac{ \sigma }{ 8 } &\le C \left| \frac{2 (\bar \xi_\ee - \xi_\ee)}{\ee^2}  \right| \le  {C} \frac{h_\rho}{\ee^{2}} \| m_0 \|_{L^\infty}.
	\end{align*}

	\noindent \textbf{Step 6. A first choice of $\ee$.} We take $\ee  =C \sigma$ 	where $C$ is chosen so that \eqref{eq:condicion ee 2} hold. Then, we have that
	\begin{equation*}
	\sigma^{3}  \le C h_\rho  .
	\end{equation*}
	where $C (T, \alpha)$ is independent of $h_t$, $h_x$ or $\delta$. This completes the proof.
\end{proof}
\begin{remark}
	Notice that we do not use the equation until step 4 and that the Lipschitz continuity of $m_0$ plays a key role. However, the homogeneous boundary conditions do not.	
\end{remark}
\begin{remark}
	Notice that we recover the exponent $h^{\frac 1 3}$ from the Lipschitz continuity of $H$. If $H$ is only $\alpha$-Hölder continuous as in \cite{Carrillo+GV+Vazquez2019}, then the rate of convergence is given by $h^{\frac \alpha { 1+ 2\alpha }}$.
\end{remark}

\section{Numerical results}

\subsection{Asymptotics as $t \to +\infty$}
Through numerical experiments, we see that the vortex seems to be the asymptotic solution also in $u$ variable. In \Cref{fig:asymptotic} we represent the asymptotic state of the two-bump initial data constructed explicitly for small times in \Cref{sec:two deltas}. We recall that why the computations in \Cref{sec:two deltas} are only valid for small time is that the first bump reaches the second bump, and we did not compute the first shock after this happens. However, as we see in \Cref{fig:asymptotic}, the first bump "eats" the second bump (possibly in finite time), and we recover the vortex. Since $u_0(0) = 0$, we have that $u(t,0) = 0$ so the vortex cannot be reached in the supremum norm. Notice also that if $u_0 (0) \ne 0$, then $u(0,t) = (u_0(0)^{-\alpha} + \alpha t)^{-\frac 1 \alpha}$.  Nevertheless, the simulation suggest convergence in all $L^p$ norms for $1 \le p < \infty$.
\begin{figure}[H]
	\centering
	\includegraphics[width=.49\textwidth]{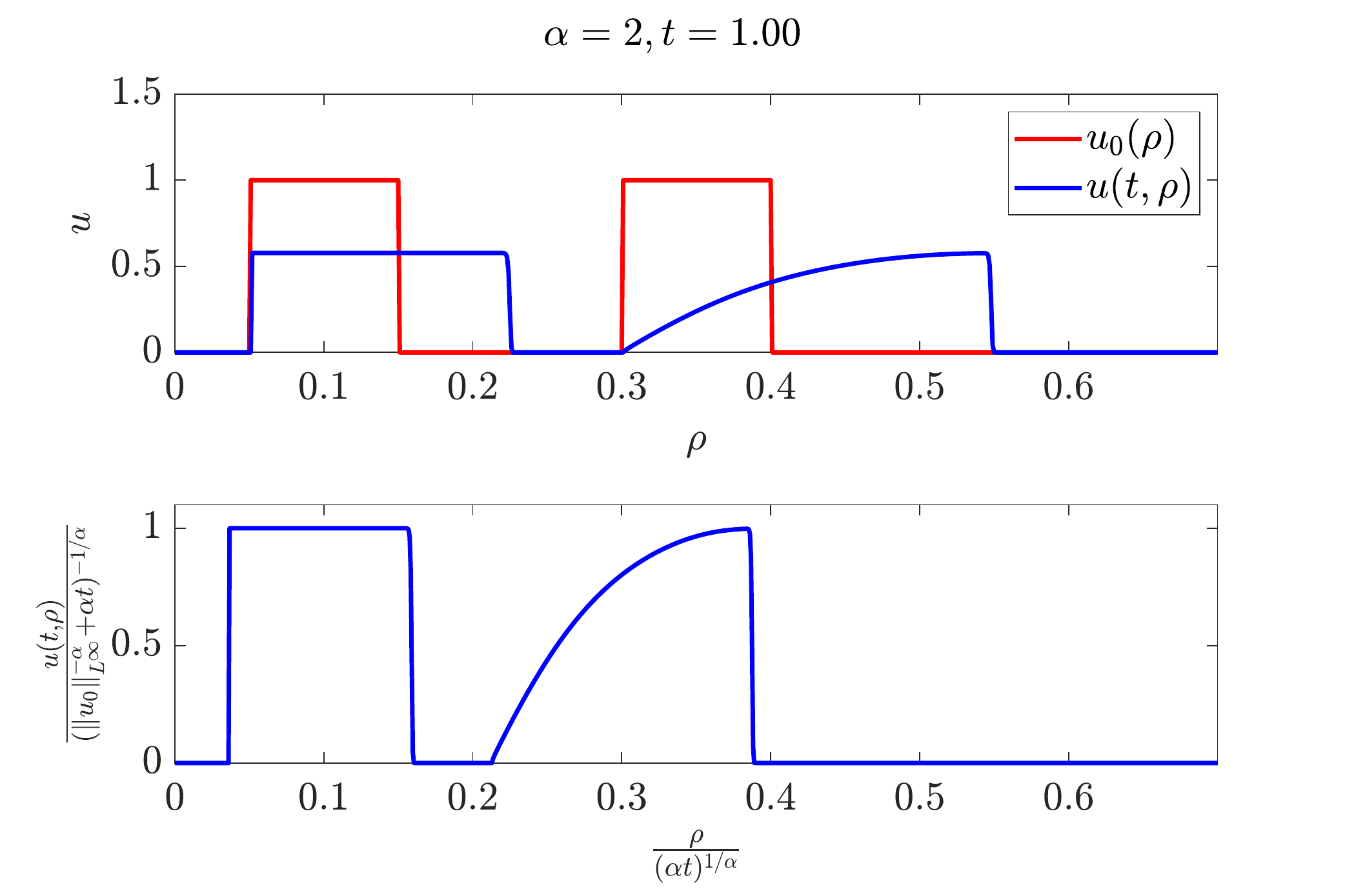}
	\includegraphics[width=.49\textwidth]{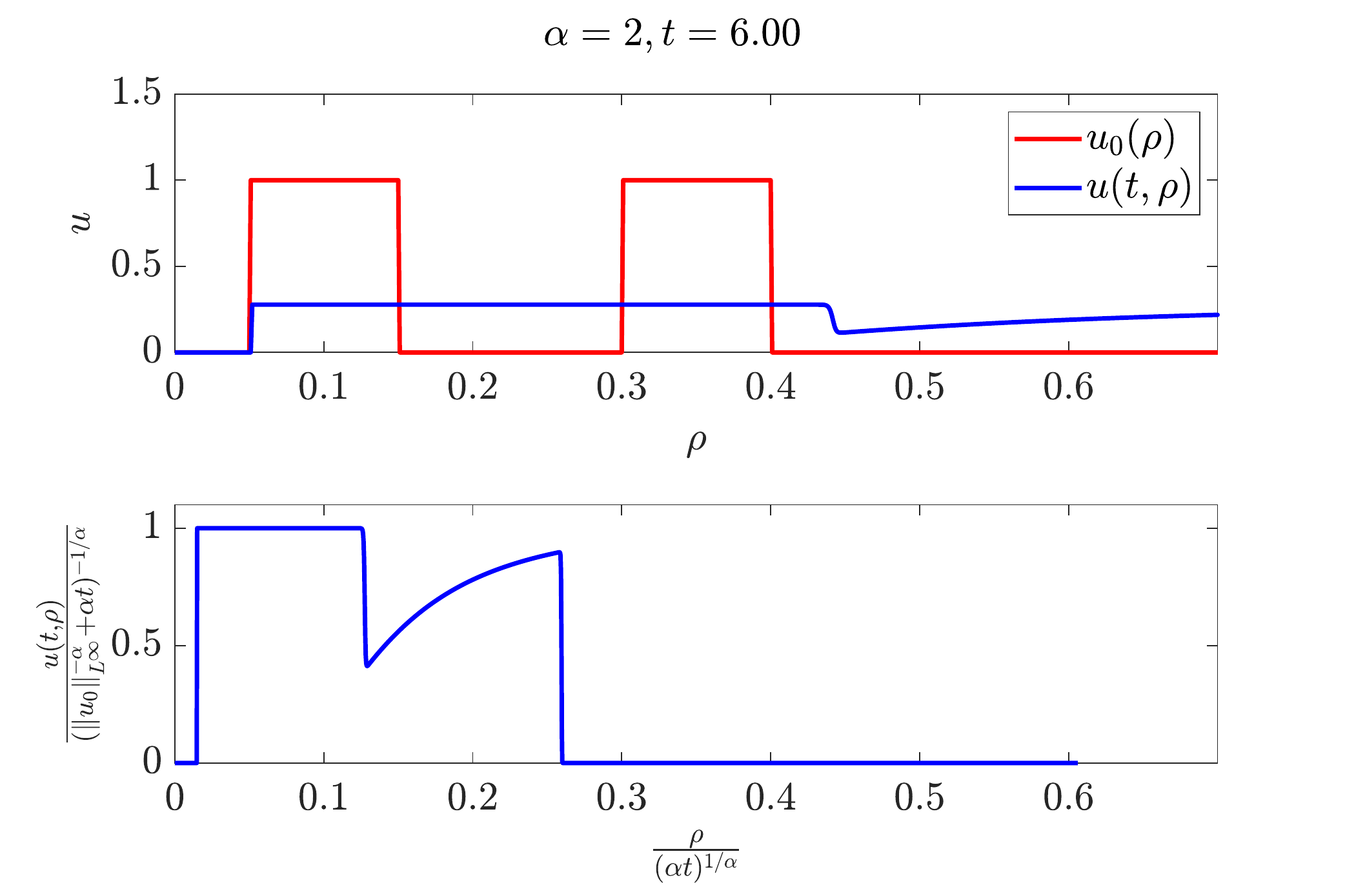}
	
	\includegraphics[width=.49\textwidth]{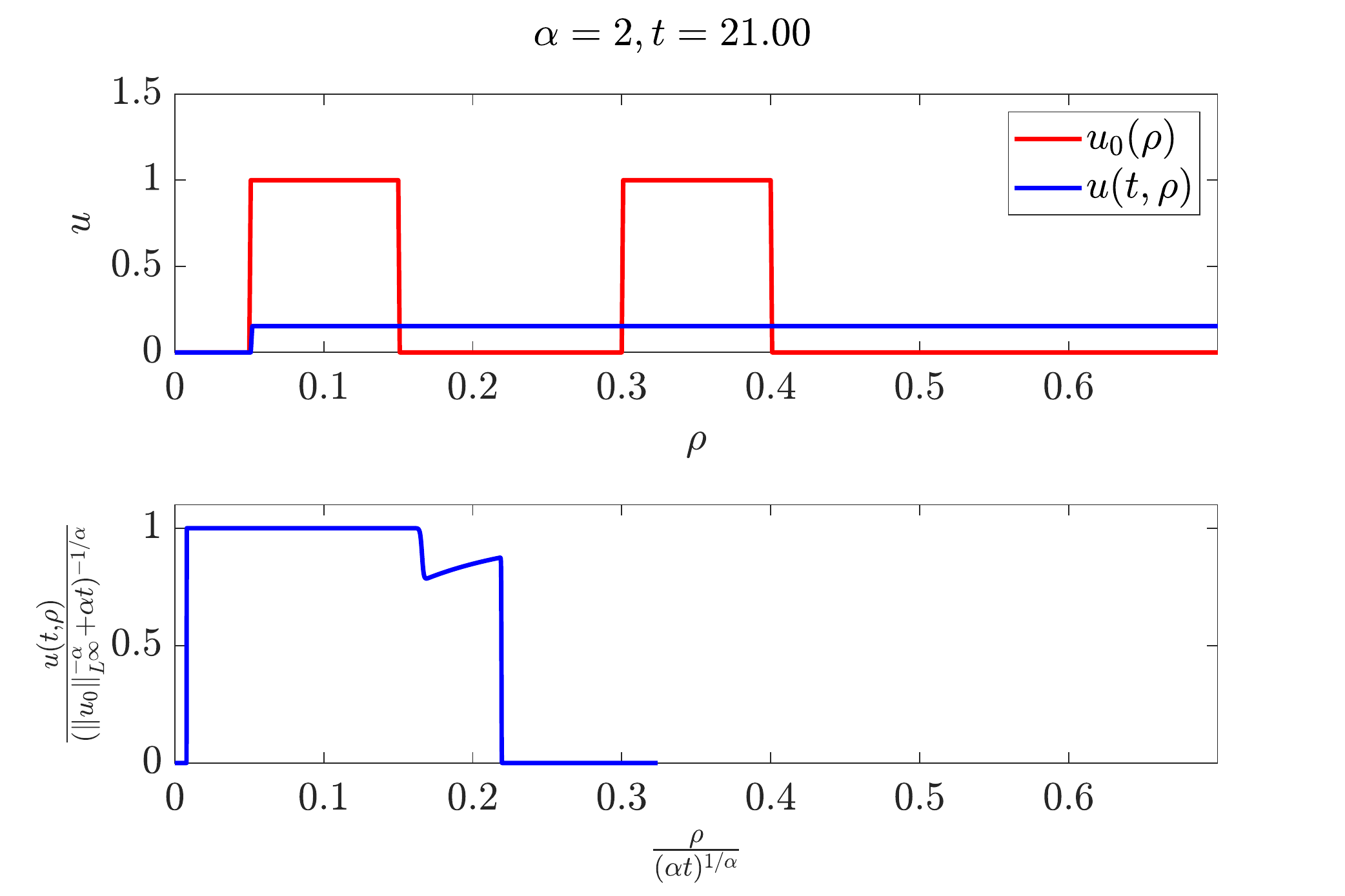}
	\includegraphics[width=.49\textwidth]{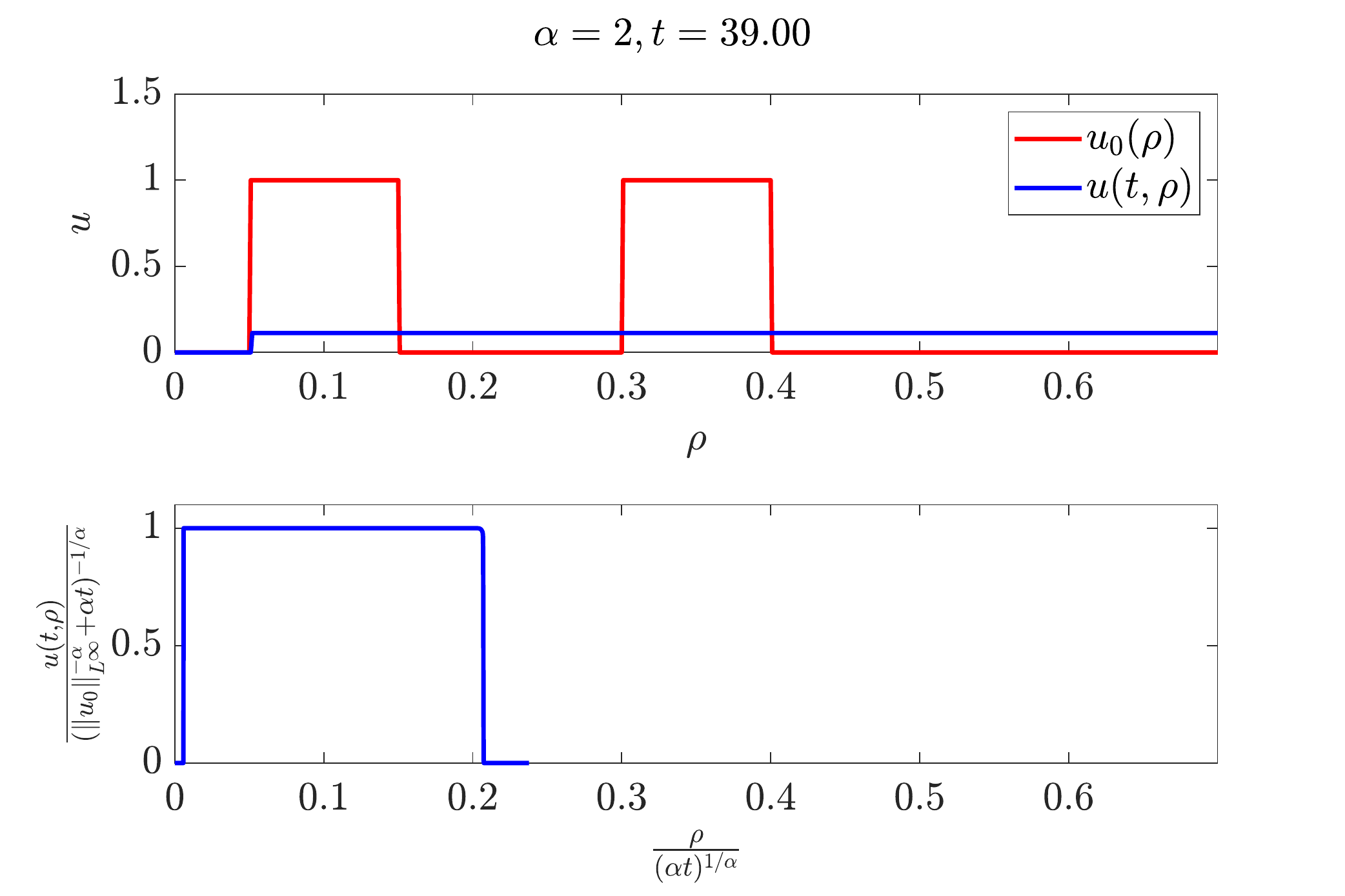}
	
	\caption{Asymptotic behaviour of $u$ in rescaled variables.
		See a movie simulation in the supplementary material \cite[Video 3]{Videos}.}
	\label{fig:asymptotic}
\end{figure}
It is an open problem to determine if the first singularity catches the boundary front in finite time for these particular solutions.

\subsection{Comparison of the waiting time}

It is interesting to compare the behaviour of different powers $u_0(\rho) = (\beta + 1) (1 - \rho)_+^\beta$ which have total mass $M = 1$. There is waiting time if $\beta \ge \frac 1 { \alpha - 1 } $ (see \Cref{thm:waiting time existence} and \Cref{thm:waiting time non-existence}). We will work with $\alpha = 2$. Since the masses are ordered, the waiting time for $u_0 = 2 (1-\rho)_+$ is shorter than that of $u_0 = 3(1-\rho)_+^2.$ It is interesting to notice that the solution for $u_0 = 3(1-\rho)_+^2$ develops a singularity at the interior of the support, before the support starts moving.

\begin{figure}[H]
	\centering
	\includegraphics[width=.32\textwidth]{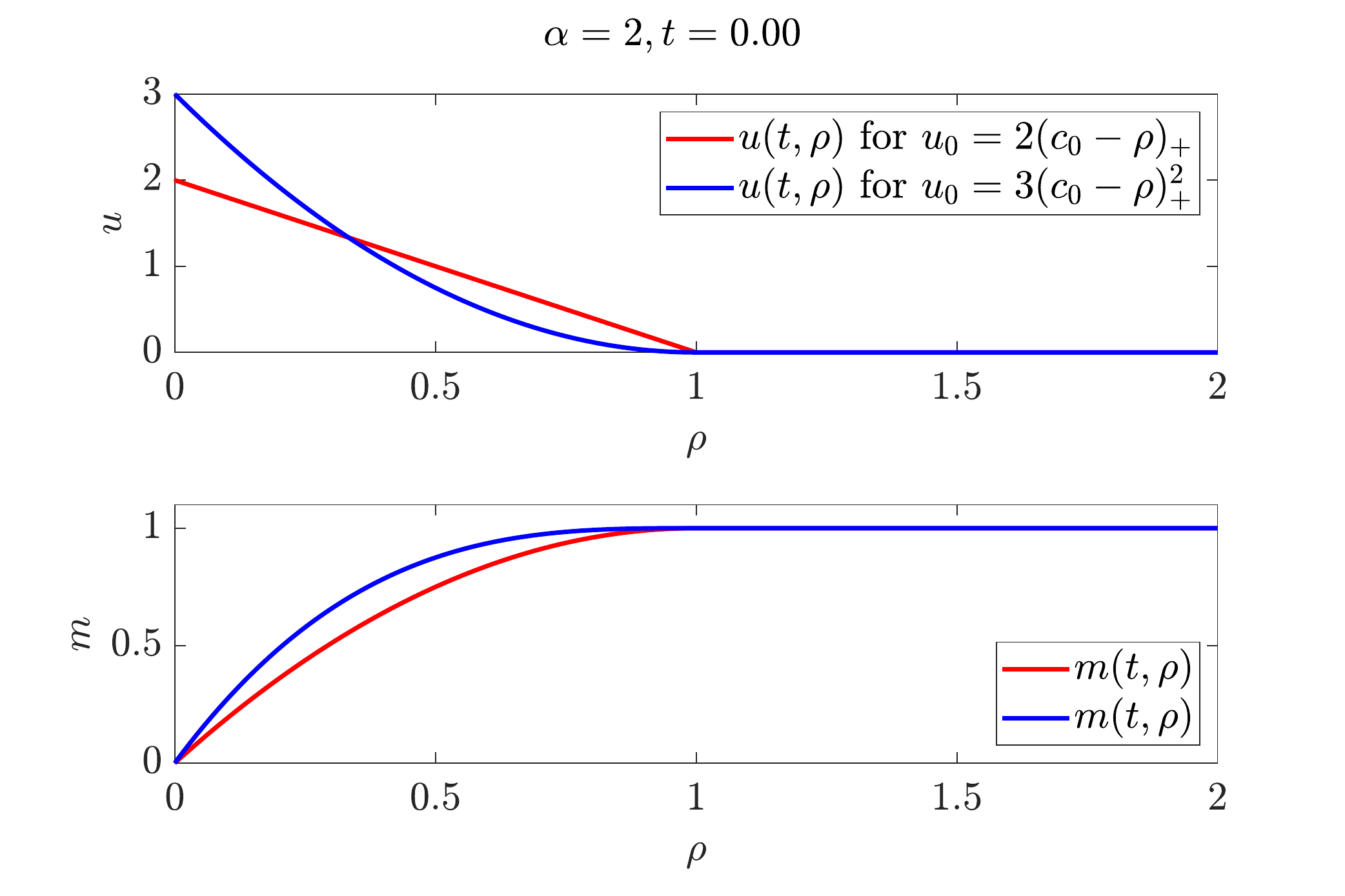}
	\includegraphics[width=.32\textwidth]{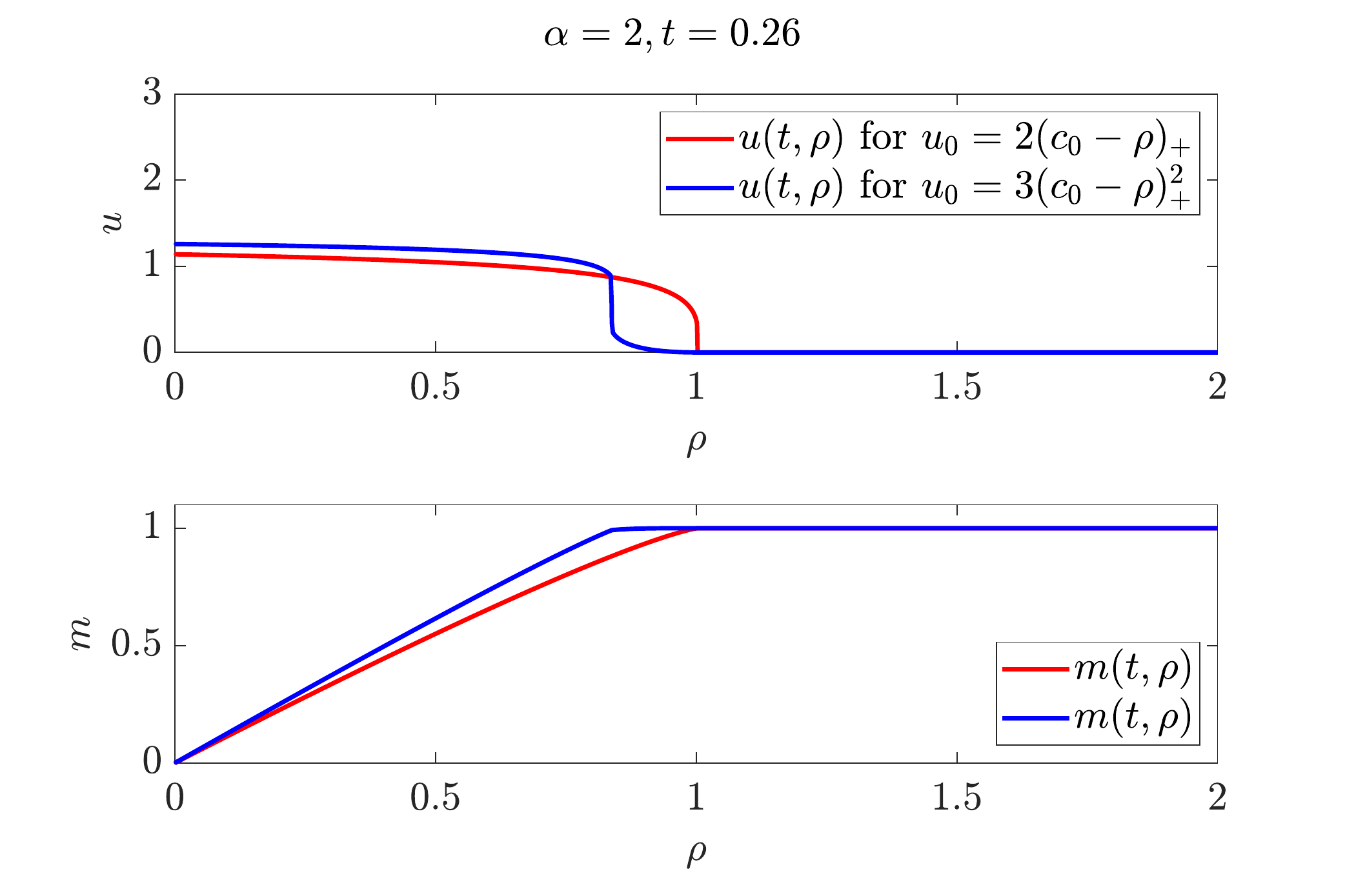}
	\includegraphics[width=.32\textwidth]{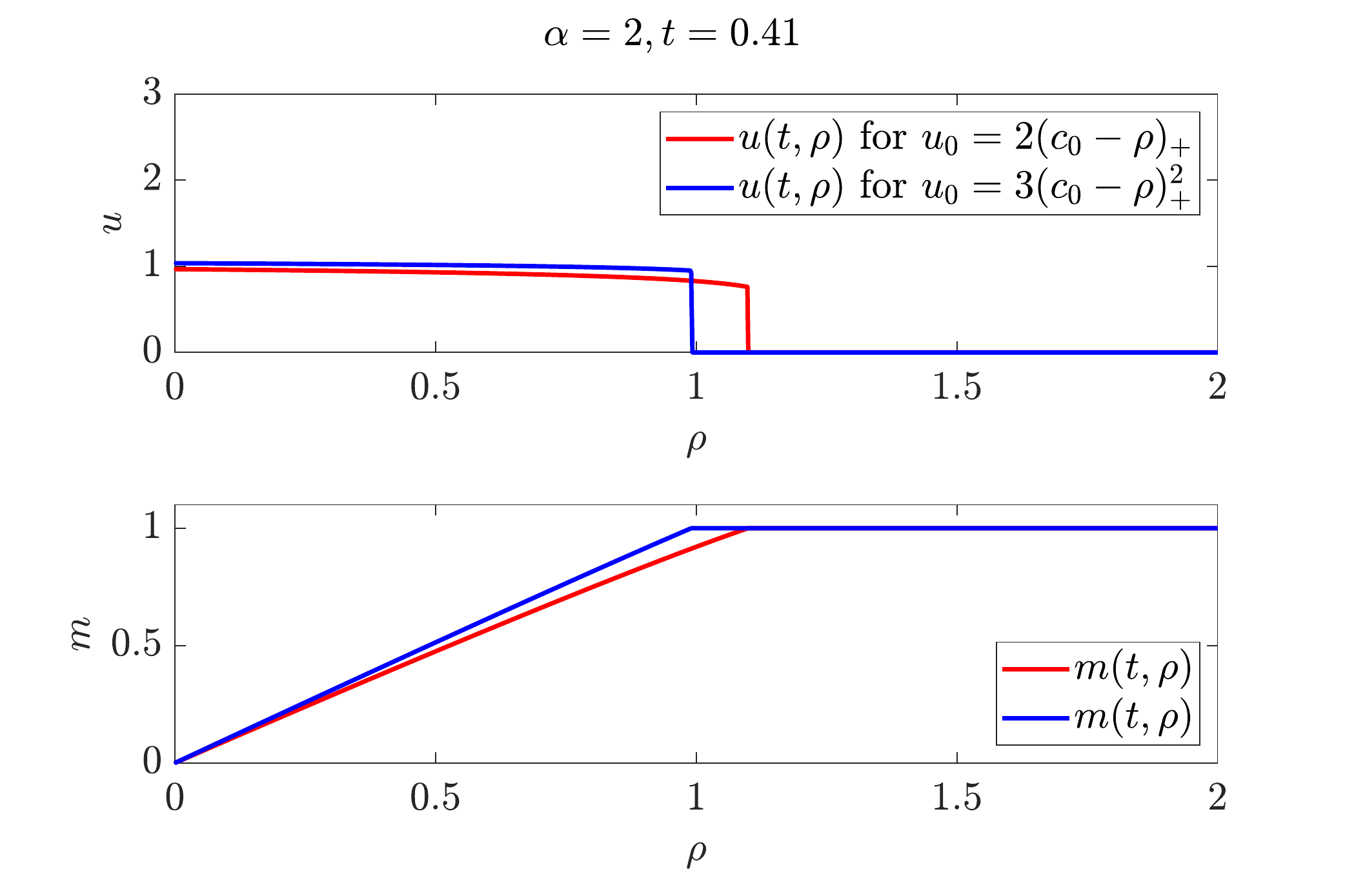}
	\caption{Behaviour of two different powers with waiting time.
		See a movie simulation in the supplementary material \cite[Video 4]{Videos}.}
\end{figure}

\subsection{Level sets of a solution with and without waiting time}

In \Cref{sec:asymptotic behaviour} we showed that $t^{\frac 1 \alpha}$ is the asymptotic behaviour of the support of $u$ for compactly supported $u_0$. For instance, if $u_0$ is a Dirac $\delta$ function at $S(0)$ of mass $M$, we have shown that the support is $[S(0), S(0) + M (\alpha t)^{\frac 1\alpha} ]$. However, for solutions with waiting time, we do not know what is the behaviour of the support for $t$ small. We illustrate an example when $u_0 = (1-\rho)_+$ for $\alpha = 2$ in \Cref{fig:triangle-contour} (cf. \Cref{fig:comparison}).
This initial datum produces a solution with waiting time due to \Cref{thm:waiting time existence}, which by \Cref{thm:solution by characteristics} is initially given by the generalised characteristics. However, as pointed out in \Cref{rem:characteristics not level sets} the characteristics are not the level sets of $m$. Notice that the level sets of $m$ are not straight even for $t$ small.
For comparison, we show a solution not given by characteristics (therefore not a classical solution) and without waiting time (by  \Cref{thm:waiting time non-existence}) which we represent in \Cref{fig:non-waiting-contour}.
\begin{figure}[H]
	\centering
	\includegraphics[width=.7\textwidth]{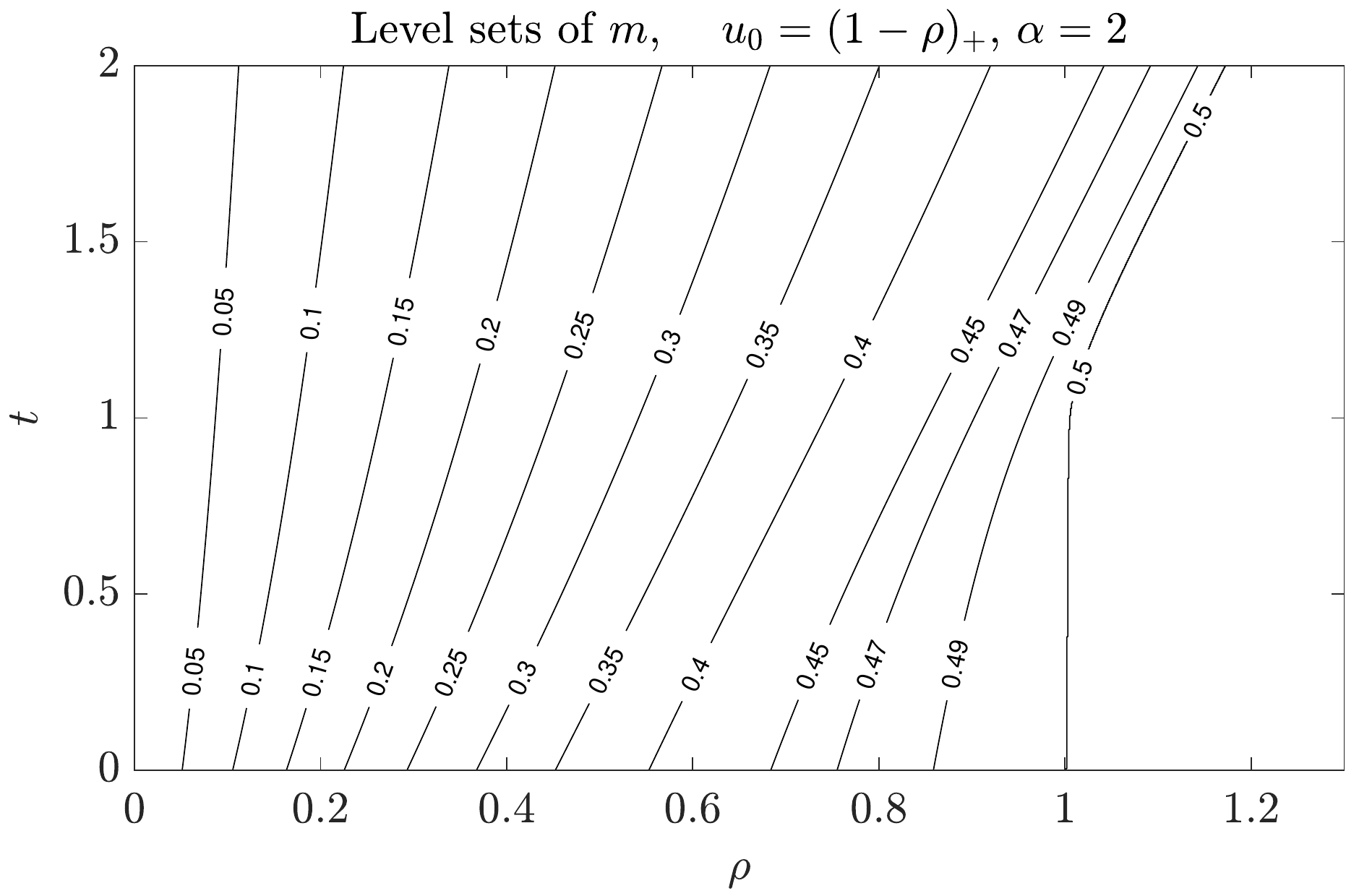}
	\caption{Level sets of the numerical solution with $u_0 = (1-\rho)_+$ for $\alpha = 2$, and a uniform mesh in space of equispaced grid $h_\rho = 1e-3$. In \Cref{fig:comparison} the reader may find a comparison with the mass subsolution with explicit Ansatz.}
	\label{fig:triangle-contour}
\end{figure}
\begin{figure}[H]
	\centering
	\includegraphics[width=.7\textwidth]{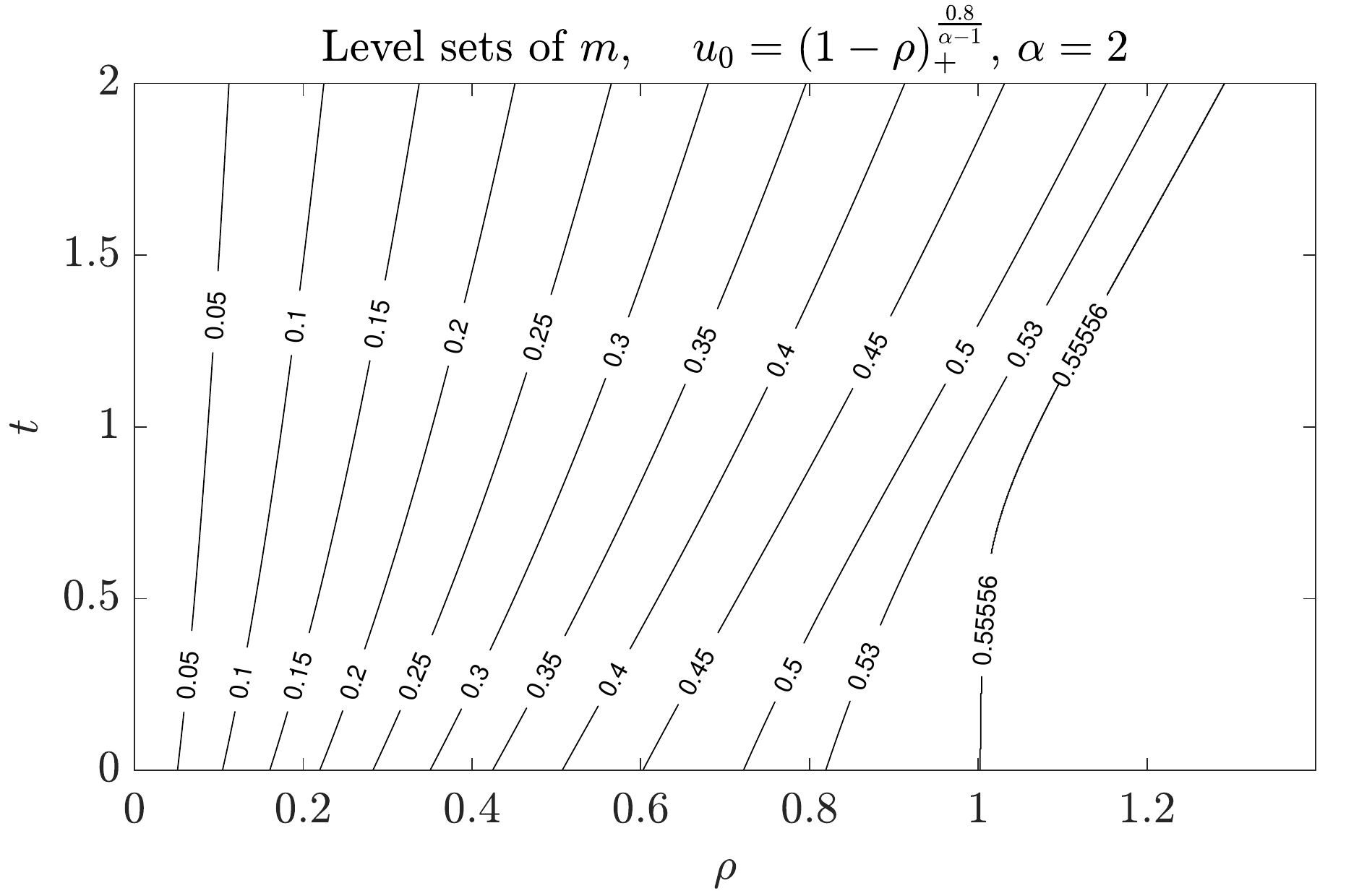}
	\caption{Level sets of the numerical solution not given by characteristics presented in \Cref{rem:no sol by characteristics}, which does not have waiting time due to \Cref{thm:waiting time non-existence}.}
	\label{fig:non-waiting-contour}
\end{figure}

\section*{Remarks and open problems}

\begin{enumerate}
	\item We have constructed a theory of radial solutions and proved well-posedness
 of the mass formulation. Uniqueness in terms of the $u$ variable is an open problem.

 \item Is there a non-radial theory? This seems to be a very difficult problem.
	\item Is there asymptotic convergence to the vortex solution in the $u$ variable in general?
	\item An interesting problem is to construct a theory for infinite mass solutions.
	\item In the two bump solution, is there actually convergence to the vortex in finite time? The numerical experiments suggest so. The ODE for $S_1$ can be written explicitly from the Rankine-Hugoniot condition, and the question is whether $S_1(t) = S_2(t)$ for some $t > 0$.
\end{enumerate}

\section* {Acknowledgments}

JAC was partially supported by EPSRC grant number EP/P031587/1. The research of JAC and DGC was supported by the Advanced Grant Nonlocal-CPD (Nonlocal PDEs for Complex Particle Dynamics:
Phase Transitions, Patterns and Synchronization) of the European Research Council Executive Agency (ERC) under the European Union’s Horizon 2020 research and innovation programme (grant agreement No. 883363).
The research of  DGC and JLV was partially supported by grant PGC2018-098440-B-I00 from the Ministerio de Ciencia, Innovación y Universidades of the Spanish Government. JLV was an Honorary Professor at Univ.\ Complutense.

\printbibliography
\end{document}